\newcommand{\runum}[1]{\mathrm{\romannumeral #1}}%小写罗马数字
\newcommand{\Rmnum}[1]{\mathrm{\expandafter\@slowromancap\romannumeral #1@}}
\newcommand\zero{\mathbf{0}}
\newcommand\dd{\,\mathrm{d}}
\newcommand{\rank}{\operatorname{rank}}
\newcommand{\T}{\operatorname{T}}
\newcommand{\floor}[1]{\left\lfloor #1 \right\rfloor}
\newcommand{\abs}[1]{\left|#1\right|}
\newcommand*\norm[1]{\left\lVert#1\right\rVert}
\newcommand{\opnorm}[1]{\left\Vert #1 \right\Vert_{\operatorname{op}}}
\newcommand{\inprod}[1]{\left\langle #1 \right\rangle}
\newcommand{\argmin}{\mathop{\arg\min}}
\newcommand{\bbP}{\mathbb{P}}
\newcommand{\bbR}{\mathbb{R}}
\newcommand{\bbE}{\mathbb{E}}
\newcommand{\bfx}{\mathbf{x}}
\newcommand{\bfy}{\mathbf{y}}
\newcommand{\bfA}{\mathbf{A}}
\newcommand{\bfB}{\mathbf{B}}
\newcommand{\bfI}{\mathbf{I}}
\newcommand{\bfX}{\mathbf{X}}
\newcommand{\bfY}{\mathbf{Y}}
\newcommand{\bfZ}{\mathbf{Z}}
\newcommand{\bmPhi}{\bm{\Phi}}
\newcommand{\bmPsi}{\bm{\Psi}}
\newcommand{\bmepsilon}{\bm{\varepsilon}}
\newcommand{\bmzeta}{\bm{\zeta}}
\newcommand{\bmtheta}{\bm{\theta}}
\newcommand{\rme}{\mathrm{e}}
\newcommand{\rmd}{\mathrm{d}}
\newcommand{\diag}{\mathrm{diag}}
\newcommand{\popR}{R_{\ell_1}} %popular loss function
\newcommand{\empR}{\widehat{R}_{\ell_1}} %empirical loss function
\newcommand{\catoniR}{ \widehat{R}_{\psi_{\alpha}, \ell_1} } %Catoni loss function
\newcommand{\pheq}{\mathrel{\phantom{=}}}
\theoremstyle{plain}
\newtheorem{theorem}{Theorem}[section]
\newtheorem{lemma}[theorem]{Lemma}
\newtheorem{assumption}[theorem]{Assumption}
\theoremstyle{definition}
\newtheorem{remark}{Remark}[section]
\newtheorem{definition}[theorem]{Definition}
\numberwithin{equation}{section}
\renewcommand\labelenumi{\textup{\alph{enumi})}}
\renewcommand\theenumi\labelenumi
\begin{document}
	
	\title[Robust estimation ]{ Robust estimation for high-dimensional time series with heavy tails}
	
	\allowdisplaybreaks[4]
	
	\date{\today}
	
	\author[Y. Wang]{Yu Wang}
	\address{Yu Wang: 1. Department of Mathematics,
		Faculty of Science and Technology,
		University of Macau,
		Av. Padre Tom\'{a}s Pereira, Taipa
		Macau, China; \ \ 2. UM Zhuhai Research Institute, Zhuhai, China.}
	\email{yc17447@um.edu.mo}
	
	\author[G. Li]{Guodong Li}
	\address{Guodong Li: Department of Statistics \& Actuarial Science,
		University of Hong Kong,
		Pokfulam Road, Hong Kong, China.}
	\email{gdli@hku.hk}
	
	\author[Z Xiao]{Zhijie Xiao}
	\address{Zhijie Xiao: Department of Economics,
		Boston College,
		Chestnut Hill, MA 02467 USA.}
	\email{zhijie.xiao@bc.edu}
	
	\author[L. Xu]{Lihu Xu}
	\address{Lihu Xu: 1. Department of Mathematics,
		Faculty of Science and Technology,
		University of Macau,
		Av. Padre Tom\'{a}s Pereira, Taipa
		Macau, China; \ \ 2. UM Zhuhai Research Institute, Zhuhai, China.}
	\email{lihuxu@um.edu.mo}
	
	\author[W. Zhang]{Wenyang Zhang}
	\address{Wenyang Zhang:  Faculty of Business Administration,
		University of Macau,
		Av. Padre Tom\'{a}s Pereira, Taipa,
		Macau, China.}
	\email{wenyangzhang@um.edu.mo}
	
	\keywords{Catoni type LAD regression; heavy-tailed distribution; Catoni type truncation method; $\beta$-mixing; time series}
	
	\subjclass[2020]{ 62J05, 62F35, 62M10, 62K25}
	
	\maketitle
	
	\begin{abstract}
		
	We study in this paper the problem of least absolute deviation (LAD) regression for high-dimensional heavy-tailed time series which have finite $\alpha$-th moment with $\alpha \in (1,2]$. To handle the heavy-tailed dependent data, we propose a Catoni type truncated minimization problem framework and obtain an $\mathcal{O}\big( \big( (d_1+d_2) (d_1\land d_2) \log^2 n / n \big)^{(\alpha - 1)/\alpha} \big)$ order excess risk, where $d_1$ and $d_2$ are the dimensionality and $n$ is the number of samples. We apply our result to study the LAD regression on high-dimensional heavy-tailed vector autoregressive (VAR) process. Simulations for the VAR($p$) model show that our new estimator with truncation are essential because the risk of the classical LAD has a tendency to blow up.  We further apply our estimation to the real data and find that ours fits the data better than the classical LAD. 
		
	\end{abstract}
	
	\tableofcontents
	
	\noindent
	
	\section{Introduction}
	\label{sec:sec1}
	The least absolute deviation (LAD) regression (see, e.g. \cite{d6e7bc40-f957-3e2f-8705-9442b9810a61, leastKani, MR0514166, MR1128411}) is a classical statistical model aiming to find the unknown minimizier $\bmtheta^*$ of the following minimization problem:
	\begin{equation}
		\label{e:PopMin1}
		\bmtheta^* = \argmin_{\bmtheta \in \bm{\Theta} } R_{\ell_1}(\bmtheta)
		\quad 
		\text{with} \quad 
		\popR(\bmtheta) = \bbE_{(\bfX, Y) \sim \bm{\Pi}} \big[ \abs{Y- \inprod{\bfX, \bmtheta}} \big],
	\end{equation}
	where $\bm{\Pi}$ is the population's distribution, and $\bm{\Theta} \subseteq \bbR^d$ is the set in which $\bmtheta^*$ is located. Because $\bm{\Pi}$ is not known in practice, one usually draws $n$ i.i.d. samples $\{ (\bfX_i , Y_i) \in \bbR^d \times \bbR, i=1, \dotsc n \}$ from $\bm{\Pi}$, and considers the following empirical optimization problem:
	\begin{equation}
		\label{e:EmpMin-0}
		\bar{\bmtheta} = \argmin_{\bmtheta \in \bm{\Theta} } \empR(\bmtheta)
		\quad \text{with} \quad
		\empR(\bmtheta) = \frac{1}{n} \sum_{i=1}^{n} \abs{ Y_i - \inprod{\bfX_i, \bmtheta} }.
	\end{equation}
	In order to obtain a theoretical guarantee for the model \eqref{e:EmpMin-0}, most of the known papers have the assumption that the distribution $\bm{\Pi}$ is {bounded} or {sub-Gaussian}; see for instance \cite{Lucien1998, Koltchinskii2011OracleII, Zhang2017EmpiricalRM}.
	
	\cite{MR2906886} proposed a novel robust ridge least square regression for the data with finite $4$-th moment, and proved that their estimation has an excess risk in the order $\mathcal{O}(d/n)$. Since the LAD has a strong robustness, it is natural to ask whether one can extend this result to the LAD with finite $2$-nd moment and obtain an excess risk in the order
	$\mathcal{O}(\sqrt{d/n})$; see Zhang et al. \cite{NEURIPS2018_8b16ebc0} for more details. It seems that this question has not been answered. A partial result was by Zhang et al. \cite{NEURIPS2018_8b16ebc0}, 
	they introduced a Catoni type truncation function $\psi_2(r): \bbR \to \bbR$ satisfying 
	\begin{equation} \label{e:C-F2}
		-\log\left(1 - r + \frac{r^2}2\right)  \le  \psi_2(r)  \le  \log\left(1 + r + \frac{r^2}2\right),
	\end{equation}
	and considered a truncated LAD problem as the following: 
	\begin{equation}
		\label{e:C-Min1}
		\hat{\bmtheta} = \argmin_{\bmtheta \in \bm{\Theta} } \widehat R_{\psi_2, \ell_1} (\bmtheta)
		\quad \text{with} \quad
		\widehat R_{\psi_2, \ell_1}(\bmtheta) = \frac{1}{n \lambda} \sum_{i=1}^n \psi_2( \lambda \abs{Y_i - \inprod{\bfX_i, \bmtheta}} ),
	\end{equation}
	where $\lambda > 0$ is a tuning parameter. Catoni's innovative truncation idea \cite{MR3052407} has been broadly applied to lots of research problems in statistics, computer science, econometrics and so on; see for instance \cite{MR3124669, MR3819104, MR4112627, MR3983786, MR4017683, MR4078461, NIPS2017_10c272d0, pmlr-v115-xu20b,MR4352550,MR4582514}. 
	
	In this paper, we shall use a simple $1$-dimensional toy model to show that it is impossible to extend the robust least square regression in \cite{MR2906886} to the LAD with finite $2$-nd moment and obtain an excess risk in the order $\sqrt{d/n}$. So a truncation is essential to mitigate the bad effect of outliers; see Section \ref{ss:NecTru}.
	
	Unfortunately, the method \eqref{e:C-Min1} does not work any more when the data only have finite $\alpha$-th moment with $\alpha \in (1,2)$. It is well known that most of data in finance and networks are heavy-tailed and not independent. For instance,  Pareto law \cite{MR4289888, MR4081550, MR3910009} describes the distributions of wealth and social networks, while
	Fr\'echet (inverse Weibull) law \cite{invweibull2008, generalinvwei2011} is used to model failure rates in reliability and biological studies. The results in \cite{MR2906886} and \cite{NEURIPS2018_8b16ebc0} are both established under the assumption that the observed data are i.i.d., so this methods do not work for time series.  
	Another disadvantage of the models \eqref{e:EmpMin-0} and \eqref{e:C-Min1} is that they can not be fitted into the multidimensional time series because the output $Y$
	is only one dimensional. See more details in Section \ref{sec:AR}.  
	
	Motivated by solving the regression problem for high-dimensional time series only having $\alpha$-th moment with $\alpha \in (1,2]$ and fitting our theory into the high-dimensional time series model such as vector autoregressoin (VAR), we will propose a truncation function 
	$\psi_\alpha(r)$ and develop a new robust estimation for the time series data $\{ (\bfX_i, \bfY_i) \in \bbR^{d_1} \times \bbR^{d_2}, i\in \mathbb{N} \}$; see more details in \eqref{e:PopLasso} and \eqref{e:EmpMin}.  
	{\bf Our contributions} are summarized as the below: 
	
	(i). We establish a truncated LAD estimation framework for the high-dimensional time series which only have $\alpha$-th moment with $\alpha \in (1,2]$. The idea is to replace the truncation function $\psi_2$ with the one $\psi_\alpha$ defined by \eqref{e:T-func} and add an $\ell_1$ penalty. On the one hand,
	as mentioned before, because the model with the truncation function $\psi_2$ only works for the regression problems with finite variance, our modification of $\psi_\alpha$ is essential and solves the moment problem in the case $\alpha \in (1,2)$. On the other hand, our result is for the high-dimensional time series rather than the i.i.d. data in the previous works. We add an $\ell_1$ penalty to solve the high-dimensional problem, and use block technique to handle the dependence problem. In our simulation, we find that the $\ell_1$ penalty is crucial for the convergence of stochastic gradient descent (SGD) algorithms. 
	
	(ii). Since the output $Y$ in the models \eqref{e:EmpMin-0} and \eqref{e:C-Min1} is one dimensional, we can not apply this model to study multivariate time series such as the classical VAR model. In order to overcome this problem, we assume that both the input $\bfX \in \bbR^{d_1}$ and the output $\bfY \in \bbR^{d_2}$ are high-dimensional and the parameter $\bm \theta \in \bbR^{d_1 \times d_2}$ is a high-dimensional matrix. The consequent landscape of the loss function $\widehat R_{\psi_\alpha, \ell_1}(\bmtheta)$ in the regression \eqref{e:C-min} is highly complex. The excess risk, commonly used in the data science \cite{MR3837109} and high-dimensional statistical inference \cite{MR2906886}, is a natural measurement for the quality of the estimator $\hat{\bmtheta}$. We show in Theorem \ref{cor:M_n = m_n = log n} below that the excess risk of the regression problem \eqref{e:C-min} is bounded by $\mathcal{O}\big( \big( { \log n  } ( \abs{\log \varepsilon} + (d_1+d_2) (d_1 \land d_2) \log n )/ {n}\big)^{{(\alpha-1)} / {\alpha}} \big)$, where $\epsilon \in (0,1)$ is a small error tolerance. Even in the classical LAD case, such an extension has been highly nontrivial due to the high-dimensionality and the complexity of the matrix.     
	
	(iii). It seems that the question of extending the least square regression in \cite{MR2906886} to the LAD with finite $2$-nd moment is still open. In this paper, we shall use a simple $1$-dimensional toy model to show that such a natural extension is impossible and the probability of the associated excess risk in the order $\mathcal{O}(\sqrt{1/n})$ is at most $1-\frac{9}{16 \rme^2}$. So a truncation is essential to mitigate the bad effect of outliers; see Section \ref{ss:NecTru}. When the dimension is high and the data are dependent, the heavy tail effect will make the LAD much worse. From our simulations for the LAD in the VAR($p$), we observe that the larger the $p$ is or the smaller the $\alpha$ is, the worse the performance of LAD will be.     
	
	(iv). We apply our result to the high-dimensional VAR($p$) models in which data only have $\alpha$-th moment with $\alpha \in (1,2]$. We simulate two VAR models, VAR($1$) and VAR($2$), and our simulation results show that our $\psi_\alpha$ truncation LAD remarkably outperforms both the classical LAD and the LAD with Huber truncation. The smaller the $\alpha$ is, the more significantly this improvement can be seen. We also apply our $\psi_\alpha$ truncation LAD to analyze the data set of the US GDP: (1) we first use Hill estimation to determine that the data is heavy-tailed with a finite $4$-th moment; (2) we fit the data into VAR($1$) model and compare the performance of our $\psi_\alpha$ truncation LAD, the classical LAD, and the LAD with Huber truncation. Our simulations indicate that the $\psi_\alpha$ truncation LAD performs the best among these three methods.
	
	The structure of this paper is outlined as follows. In the following section, we introduce our assumptions and main result. In \autoref{sec:Proof}, we establish our main result by utilizing auxiliary lemmas. In \autoref{sec:AR}, we apply our result to high-dimensional VAR models. To illustrate the advantages of \eqref{e:C-min}, we present simulations in \autoref{sec:simulation}, encompassing both VAR($p$) models and real time dependent data.
	
	\section{Preliminary and Main Results}
	\label{sec:sec2}
	
	\subsection{Notations, Assumptions and Problem Setting}
	
	Throughout this article, we use normal font for scalars (e.g.\ $a, A, \delta \dots$) and boldface for vectors and matrices (e.g.\ $\bfx,\bfy, \bfA,\bfB, \dots$). For any vector $\bfx = (x_1,\dotsc, x_d)^{\T}$ and $\bfy=(y_1, \dotsc, y_d)^{\T}$ in $\bbR^d$, let $\langle \bfx, \bfy \rangle = \sum_{i=1}^d x_i y_i $ be the inner product. Here, $\T$ denotes the transpose. This induces to the Euclidean norm $|\bfx| = \langle \bfx,\bfx \rangle^{1/2}$. 
	We denote the operator norm of any matrix $\bfA \in \bbR^{d_1 \times d_2}$ as $\opnorm{\bfA}= \sup_{ \bfx \in \mathbb{S}^{d_2-1} } \abs{ \bfA \bfx}$, where $\mathbb{S}^{d_2-1} = \{ \bfx\in\bbR^{d_2}: \abs{\bfx} = 1\}$, and denote  the rank of $\bfA$ by $\rank(\bfA)$, which satisfies $\rank(\bfA)  \le  d_1 \land d_2$. Let $\norm{\bfA}_{1,1} = \sum_{i = 1}^{d_1} \sum_{j=1}^{d_2} \abs{A_{ij}}$ be the $L_{1,1}$-norm of $\bfA$. If matrix $\bfB \in \bbR^{ d \times d}$ has eigenvalues $\lambda_i$, $i = 1\dotsc, d$, we let $\rho(\bfB) = \max_{1 \le  i  \le  d} \abs{ \lambda_i}$ be the spectral radius of $\bfB$. The following relations hold:
	\begin{equation}
		\label{e:op_11}
		\norm{\bfA}_{1,1}  \le  \sqrt{ ( d_1+d_2) \rank(\bfA) } \opnorm{\bfA} \quad
		\text{ and } \quad
		\rho(\bfB)  \le  \opnorm{\bfB}.
	\end{equation}
	For two sequences $\{f(n), n\in \mathbb{N}\} $ and $\{g(n), n\in \mathbb{N}\}$, $f(n) = \mathcal{O}(g(n))$ means $ f(n) / g(n) < \infty$ as $n \to \infty$ , and $f(n) = o(g(n))$ means $ f(n) / g(n) = 0$  as $n \to \infty$. 
	
	Suppose $\bfZ_1$ and $\bfZ_2$ are two real random variables (or vectors), and denote by $\bbP_{\bfZ_1 \times \bfZ_2}$ the joint probability of $(\bfZ_1, \bfZ_2)$ and by $\bbP_{\bfZ_1}, \bbP_{\bfZ_2}$ the probabilities of $\bfZ_1$ and $\bfZ_2$ respectively. Define their dependence as 
	\[
	\beta(\bfZ_1 , \bfZ_2) = \norm{ \bbP_{\bfZ_1 \times \bfZ_2} - \bbP_{\bfZ_1} \times \bbP_{\bfZ_2} }_{\mathrm{TV}},
	\]
	where $\norm{\cdot}_{\mathrm{TV}}$ is the total variation norm of probability measures. The coefficient $\beta(\bfZ_1 , \bfZ_2)$ vanishes if and only if $\bfZ_1$ and $\bfZ_2$ are independent. We refer the reader to \cite{lu2022almost,bradley2005basic,davydov1973mixing} for more details.

	Let $\{(\bfX_i, \bfY_i) \in \bbR^{d_1} \times \bbR^{d_2}, i \in \mathbb{N} \}$ under our consideration be a time series satisfying the following assumptions. 
	\begin{assumption}
		\label{A1:mixing}
		$\{(\bfX_i, \bfY_i), i  \ge  1\}$ is a stationary time series with a marginal distribution $\bm{\Pi}$, i.e., $(\bfX_i, \bfY_i) \sim \bm{\Pi}$ for each $i$, and exponentially $\beta$-mixing, i.e., there exist some constants $B > 0$ and $\beta > 0$ such that
		\[
		\beta(n)  \le  B \rme^{-\beta n} , \quad n \in \mathbb{N},
		\]
		where 
		\begin{equation*}
			\label{def:beta-mixing coefficient}
			\beta(n) = \sup_{k \ge  1} \beta \big(\{(\bfX_i, \bfY_i): i \le  k\}, \{(\bfX_i, \bfY_i): i \ge  n+k\} \big), \quad n \in \mathbb{N}.
		\end{equation*}		
	\end{assumption}
	We assume that $\bfX_i$ and $\bfY_i$ have linear relation up to external heavy-tailed noises. Since the time series is high-dimensional and stationary with marginal distribution $\bm{\Pi}$, it is natural to consider the following LAD problem: 
	\begin{equation}
		\label{e:PopLasso}
		\bmtheta^* = \argmin_{\bmtheta \in \bm{\Theta} }  [ R_{\ell_1}(\bmtheta) + \gamma \norm{\bmtheta}_{1,1} ]
		\quad 
		\text{with} \quad 
		\popR(\bmtheta) = \bbE_{(\bfX, \bfY) \sim \bm{\Pi}} \big[ \abs{\bfY- \bmtheta \cdot \bfX} \big],
	\end{equation}
	where $\bm{\Theta} \subseteq \bbR^{d_2 \times d_1}$ satisfies the assumptions below, the tuning parameter $\gamma$ in the penalty term will be chosen later, and $\bmtheta \cdot \bfX$ is a product of a matrix $\bmtheta \in \bbR^{d_2 \times d_1}$ with a vector $\bfX \in \bbR^{d_1}$. We emphasize that $|\cdot|$ is the Euclidean norm. The observed data for the minimization problem \eqref{e:PopLasso} are not i.i.d. but $\beta$-mixing time series, which is more reasonable in many real application scenarios. Let $\{(\bfX_i, \bfY_i): 1\le i \le n\}$ be the $n$ observations from the time series, it is natural to consider the corresponding empirical optimization problem:
	\begin{equation}
		\label{e:EmpMin}
		\bar{\bmtheta} = \argmin_{\bmtheta \in \bm{\Theta}} \big[ \widehat{R}_{\ell_1}(\bmtheta) + \gamma \norm{\bmtheta}_{1,1} \big] 
		\quad \text{with} \quad
		\widehat{R}_{\ell_1}(\bmtheta) = \frac{1}{n} \sum_{i = 1}^n \abs{ \bfY_{i} - \bmtheta \cdot \bfX_i } .
	\end{equation}
	
	We further assume that the following conditions regarding the $\alpha$-th moment hold.
	\begin{assumption}
		\label{A2:moments}
		\begin{itemize}
			\item[$(\runum{1})$] The $\alpha$-th moment of $\bfX$ with $\alpha \in (1,2]$ is bounded, that is,
			\[
			\bbE_{(\bfX, \bfY) \sim \bm{\Pi}}\big[ \abs{\bfX}^{\alpha} \big] < \infty.
			\]
			\item[$(\runum{2})$] The $\ell_{\alpha}$-risk with $\alpha \in (1,2]$  is uniformly bounded with respect to $\bmtheta \in \bm{\Theta}$, that is,
			\[
			\sup_{\bmtheta\in\bm{\Theta}} R_{\ell_{\alpha}} (\bmtheta)  < \infty, 
			\quad \text{with}\quad
			R_{\ell_{\alpha}} (\bmtheta) = \bbE_{(\bfX, \bfY) \sim \bm{\Pi}}\big[ \abs{ \bfY- \bmtheta \cdot \bfX }^{\alpha} \big] .
			\] 
		\end{itemize}
	\end{assumption}

	Besides these, we need assumptions on $\bm{\Theta}$ as below, and we refer the reader to \cite{MR3837109} for more details. 
	
	\begin{definition}
		Let $(\bm{\Theta}, \rmd)$ be a metric space, and $\bm{ \mathcal{K} }$ be a subset of $\bm{\Theta}$. Then, a subset $\bm{\mathcal{N} } \subseteq \bm{ \mathcal{K}}$ is called an $\delta$-net of $\bm{\mathcal{K}}$ if for any $\bmtheta \in \bm{\mathcal{K}}$, there exists a $\tilde{\bmtheta} \in \bm{\mathcal{N}}$ such that $\rmd(\bmtheta, \tilde{\bmtheta})  \le  \delta$. Besides, the covering number is the minimal cardinality of the $\delta$-net of $\bm{\Theta}$ and denoted by $N(\bm{\Theta},\delta)$.
	\end{definition}
	
	We shall assume that: 
	
	\begin{assumption}
		\label{A3:net}
		For any $\bmtheta, \tilde{\bmtheta} \in \bm{\Theta}$, let $\rmd(\bmtheta, \tilde{\bmtheta}) = \|{ \bmtheta - \tilde{\bmtheta} } \|_{\rm op}$. Assume that the domain $\bm{\Theta}$ is totally bounded, that is, for any $\delta>0$, there exists a finite $\delta$-net of $\bm{\Theta}$.
	\end{assumption}
	
	We also add the following sparsity assumption about the parameter $\bmtheta$, which will help us to compute the covering number $N(\bm{\Theta},\delta)$. 	
	\begin{assumption}
		\label{A4:matrix}
		For any $\bmtheta \in \bm{\Theta} \subseteq \bbR^{d_2 \times d_1}$, we assume that $\rank(\bmtheta)  \le  \kappa$ and $\opnorm{ \bmtheta } \le  R$ for some constants $\kappa, R > 0$.
	\end{assumption}
	
	\subsection{Necessity of Truncation} \label{ss:NecTru}
	Recall that \cite{MR2906886} studied a ridge least square regression for high-dimensional data with finite $4$-th moment, and proved that their model has an excess risk in the order $d/n$. It is natural to extend this result to the LAD with finite $2$nd moment and obtain an excess risk in the order
	$\mathcal{O}(\sqrt{d/n})$. The following toy model will tell us that such an extension is impossible, so a truncation is necessary. 
	
	We consider the following model on $\bbR$:
	\begin{equation} \label{e:Model}
		Y = \theta^*+\zeta, 
	\end{equation} 
	where we assume that the true value $\theta^*=0$ without loss of generality. Let $n$ be the number of the observed data from the model. We assume $  \zeta $ follows a distribution with a density probability function given by  
	\begin{equation}\label{e:s_noise}
		p_{  \zeta }(x) = \rho_n \mathbbm{1}_{\{x=0\}} +  \frac{(1-\rho_n) \mu}{2\abs{x}^{\mu+1}} \mathbbm{1}_{ \{ \abs{x} \ge  1 \} }
	\end{equation}
	for some constant $\mu \in (1,2]$ and $\rho_n = 1/\sqrt{n}$. It is easy to verify that $\bbE [ Y ] = 0$, $\bbE [ \abs{Y}^\alpha ] < \infty$ for all $\alpha < \mu$.
	It is clear that 
	\begin{equation}\label{e:s_model}
		\quad \theta^{*} = \argmin_{\theta} R_{\ell_1}(\theta ) \quad \text{with} \quad R_{\ell_1}(\theta ) =  \bbE \abs{Y-\theta}.
	\end{equation} 
	Furthermore, a straightforward calculation yields that
	\begin{equation}\label{e:R_1}
		R_{\ell_1}(\theta) 
		=\left\{
		\begin{aligned}
			\frac{\mu}{\mu-1} (1-\rho_n) + \rho_n \abs{\theta}, \qquad \qquad & \abs{\theta} < 1 , \\
			(1-\rho_n)\left[ \abs{\theta} + \frac{1}{\mu-1} \abs{\theta}^{-\mu+1} \right] + \rho_n \abs{\theta}, \quad &\abs{\theta} \ge  1 ,
		\end{aligned}
		\right.
	\end{equation} 
	which is an even function with respect to $\theta$ and is increasing for $\theta>0$. Thus, 
	\[
	R_{\ell_1}(\theta^*) = R_{\ell_1}(0) = \frac{\mu}{\mu-1}(1-\rho_n).
	\] 
	
	Let $\{ Y_i, i =1,\dotsc, n \} $ be a sequence of observations from the model \eqref{e:Model}, which are i.i.d. and have the distribution \eqref{e:s_noise}. 
	Recall that the LAD estimator for $\theta^*$ is defined as 
	\[
	\bar{\theta} = \argmin_{\theta} \widehat{R}_{\ell_1}(\theta), \quad \widehat{R}_{\ell_1}(\theta) = \frac{1}{n} \sum_{i = 1}^{n} \abs{Y_i - \theta}.
	\]
	In order to make our argument a little more simple, we assume $n=2m+1$. The other case $n=2m$ can be analysed similarly. 
	It is well known that 
	$$\bar{\theta} = Y_{(m+1)}$$ is the median of  $\{ Y_i, i =1,\dotsc, 2m+1 \}$. We shall rigorously show in the \autoref{s:LADBigPro} that as $n \ge n_0$ with some constant $n_0 \in \mathbb{N}$, we have
	\begin{equation} \label{e:LADBigPro}
		\bbP\left( R_{\ell_1}(\bar{\theta}) - R_{\ell_1}(\theta^*)  \ge   \frac{1}{\sqrt{n}} \right)  \ge  \frac{9}{16 \rme^2}.
	\end{equation} 
	
	From \eqref{e:LADBigPro}, we know that in order to make the LAD estimation work well, one has to sample a large number of observations to reduce the bad effects of outliers. A suitable truncation is needed to remove outliers and make the estimation more efficient.   
	
	\subsection{Our Truncated Estimation and Main Result} Our Catoni type truncation function $\psi_\alpha(r): \bbR \to \bbR$ satisfies that
	\begin{equation}
		\label{e:T-func}
		-\log\left( 1 - r + \frac{\abs{r}^{\alpha}}{\alpha} \right)  \le  \psi_\alpha(r)  \le  \log\left( 1 + r + \frac{\abs{r}^{\alpha}}{\alpha} \right), \quad \alpha \in (1,2].
	\end{equation}
	Let  $\{(\bfX_i, \bfY_i), 1 \le  i  \le  n\}$ be the observed time series, we propose the following truncated estimation: 
	\begin{equation}
		\label{e:C-min}
		\begin{split}
			\hat{\bmtheta} &= \argmin_{\bmtheta \in \bm{\Theta} } \big[ \widehat R_{\psi_\alpha, \ell_1} (\bmtheta) + \gamma \norm{\bmtheta}_{1, 1} \big] \\
			%\quad \text{with} \quad
			\widehat R_{\psi_\alpha, \ell_1}(\bmtheta) &= \frac{1}{n\lambda} \sum_{i=1}^n \psi_\alpha \big( \lambda \abs{\bfY_i - \bmtheta \cdot \bfX_i } \big),
		\end{split}
	\end{equation}
	where $\lambda, \gamma>0$ are the tuning parameters to be chosen later.
	\begin{definition}
		\label{def:excess risk}
		Generally speaking, excess risk is the gap of population risk between the current model and the optimal one.	
		In this paper, we consider the popular risk $R_{\ell_1}$ given in \eqref{e:PopLasso}, and the corresponding excess risk is $$\popR(\hat{\bmtheta}) - \popR(\bmtheta^*),$$ where $\bmtheta^*$ and  $\hat{\bmtheta}$ are defined in \eqref{e:PopLasso} and \eqref{e:C-min} respectively. 
	\end{definition}
	The following theorem is our main theoretical result in this paper, whose proof will be given in the following section.  
	
	\begin{theorem}
		\label{cor:M_n = m_n = log n}
		Let $\bmtheta^*$ and $\hat{\bmtheta}$ be the minimizers of minimization problems \eqref{e:PopLasso} and \eqref{e:C-min} respectively. Under Assumptions \ref{A1:mixing}, \ref{A2:moments}, \ref{A3:net} and \ref{A4:matrix}, for any $\varepsilon \in (0,1/2)$, let the parameters $\delta$, $\lambda$ and $\gamma$   satisfy 
		\[
		\delta = \frac{12 \log n}{n\beta}, \quad 
		\lambda = \left( 2\delta \left( \log \frac{16}{\varepsilon^2} + (d_1 + d_2) \kappa \log \frac{6 R}{\delta} \right) \right)^{{1}/{\alpha}}, \quad
		\gamma = \frac{\log n}{n }, 
		\]
		where $n$ is the sample size sufficiently large such that
		\[
		\delta < 1, \quad	\frac{(d_1 + d_2 )\kappa \log n}{n} < 1 \quad \text{ and } \quad \frac{\beta}{n(2\log n - \beta)}  \le  \frac{\varepsilon}{2 B } \left( \frac{\delta}{6R} \right)^{ (d_1+d_2) \kappa } .
		\]
		Then, the following inequality holds with probability at least $1-2\varepsilon$,
		\begin{equation}
			\label{ineq:2 of cor.2}
			\popR(\hat{\bmtheta}) - \popR(\bmtheta^*)  \le  C \left( \frac{ \log n  }{\beta n} \big( \abs{\log \varepsilon} + (d_1+d_2) \kappa \log n \big) \right)^{{(\alpha-1)} / {\alpha}},
		\end{equation} 
		for some constant $C>0$ independent of $n, \varepsilon, d_1, d_2, \kappa$ and $\alpha$. 
	\end{theorem}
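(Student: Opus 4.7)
The plan is to prove the theorem in four stages. First, for each fixed $\bmtheta\in\bm{\Theta}$ I would derive a one-point exponential-moment bound for the truncated empirical risk; second, I would upgrade this to a uniform two-sided deviation over $\bm{\Theta}$ via an $\delta$-net argument, using the low-rank Assumption \ref{A4:matrix} to control the covering number; third, I would combine the uniform deviation with the optimality of $\hat\bmtheta$ and $\bmtheta^*$ to extract an excess-risk inequality; and finally, I would substitute the stated choices of $\delta$, $\lambda$ and $\gamma$ to read off the rate.

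Fix $\bmtheta$ and write $Z_i:=\abs{\bfY_i-\bmtheta\cdot\bfX_i}$. The key observation is that the defining inequality \eqref{e:T-func} gives $\exp(\psi_\alpha(\lambda r))\le 1+\lambda r+\lambda^\alpha\abs{r}^\alpha/\alpha$. In the i.i.d.\ case multiplicativity of the moment generating function then yields
\begin{equation*}
\bbE\exp\Bigl(\sum_{i=1}^n\psi_\alpha(\lambda Z_i)\Bigr)\le\exp\Bigl(n\lambda \popR(\bmtheta)+\tfrac{n\lambda^\alpha}{\alpha}R_{\ell_\alpha}(\bmtheta)\Bigr),
\end{equation*}
so a Chernoff step gives, with probability at least $1-\rme^{-t}$,
\begin{equation*}
\catoniR(\bmtheta)\le \popR(\bmtheta)+\frac{\lambda^{\alpha-1}}{\alpha}R_{\ell_\alpha}(\bmtheta)+\frac{t}{n\lambda};
\end{equation*}
the symmetric lower bound follows from the left half of \eqref{e:T-func}. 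Under Assumption \ref{A1:mixing} I would restore approximate multiplicativity by cutting the sample into $\lceil n/(2m)\rceil$ consecutive blocks of length $2m$ and applying Berbee's coupling to replace the odd- and even-indexed blocks by independent copies at total-variation cost $\le (n/m)B\rme^{-\beta m}$. The choice $m\asymp\log n/\beta$ is what makes this error match the blocking-size parameter $\delta=12\log n/(n\beta)$ and the tail condition imposed on $n$ in the hypothesis.

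To move from one point to all of $\bm{\Theta}$, I would use the standard covering estimate $\log N(\bm{\Theta},\delta)\le (d_1+d_2)\kappa\log(3R/\delta)$ for rank-$\kappa$ operators with operator norm $\le R$ (this is where Assumption \ref{A4:matrix} earns its keep) and union-bound with $\rme^{-t}=\varepsilon/N(\bm{\Theta},\delta)$. The passage from the net to the whole set I would handle by Lipschitz continuation: both branches in \eqref{e:T-func} have derivatives bounded by $1$ near the origin, and $\bmtheta\mapsto\abs{\bfY-\bmtheta\cdot\bfX}$ is $\abs{\bfX}$-Lipschitz in operator norm, so Assumption \ref{A2:moments}(i) makes the continuation error $O(\delta)$, which for the prescribed $\delta$ is of strictly smaller order than the Catoni term. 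Writing $\Delta(\bmtheta):=\catoniR(\bmtheta)-\popR(\bmtheta)$, the optimality of $\hat\bmtheta$ for \eqref{e:C-min} gives
\begin{equation*}
\popR(\hat\bmtheta)-\popR(\bmtheta^*)\le 2\sup_{\bmtheta\in\bm{\Theta}}\abs{\Delta(\bmtheta)}+\gamma\bigl(\norm{\bmtheta^*}_{1,1}-\norm{\hat\bmtheta}_{1,1}\bigr),
\end{equation*}
and \eqref{e:op_11} bounds the penalty contribution by $\gamma\sqrt{(d_1+d_2)\kappa}\,R$, which is negligible for $\gamma=\log n/n$. Substituting the prescribed $\lambda$, which is exactly the value that balances $\lambda^{\alpha-1}$ against $t/(n\lambda)$ after the union bound, produces the rate $\lambda^{\alpha-1}\asymp\bigl((\log n/(n\beta))(\abs{\log\varepsilon}+(d_1+d_2)\kappa\log n)\bigr)^{(\alpha-1)/\alpha}$ stated in \eqref{ineq:2 of cor.2}.

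The main obstacle I anticipate is the blocking step. Catoni's method relies fundamentally on multiplicativity of the truncated MGF, which breaks under $\beta$-mixing, and the whole rate depends on choosing the block length $m$, the net size $\delta$, and the truncation scale $\lambda$ jointly so that the $\rme^{-\beta m}$ blocking error is absorbed both into the probability budget $\varepsilon$ and into the $(6R/\delta)^{(d_1+d_2)\kappa}$ covering-number factor. The condition $\beta/(n(2\log n-\beta))\le(\varepsilon/(2B))(\delta/(6R))^{(d_1+d_2)\kappa}$ in the theorem statement is precisely this balancing constraint, and I expect the bulk of the technical work to consist of verifying that the one-point Catoni bound survives coupling without degrading its $n$-dependence, together with keeping the $\ell_{1,1}$-penalty contribution well below the leading-order Catoni term.
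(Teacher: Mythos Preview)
Your overall strategy—Catoni MGF plus Chernoff at one point, Berbee coupling to restore independence over blocks of length $\asymp\log n/\beta$, union bound over a covering, then optimality—matches the paper's. The decomposition you use, bounding the excess risk by $2\sup_{\bmtheta}|\Delta(\bmtheta)|$ plus the penalty gap, is slightly looser than what the paper does: it only needs the one-point upper deviation $\catoniR(\bmtheta^*)-\popR(\bmtheta^*)$ and the one-sided uniform lower deviation $\popR(\hat\bmtheta)-\catoniR(\hat\bmtheta)$, so the covering budget is spent only once rather than on both tails. This is a cosmetic difference.

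There is, however, a genuine gap in your net-to-whole-set extension. Your Lipschitz argument gives
\[
\bigl|\catoniR(\hat\bmtheta)-\catoniR(\tilde\bmtheta)\bigr|\le\frac{\delta}{n}\sum_{i=1}^n|\bfX_i|,
\]
and you claim Assumption~\ref{A2:moments}(i) makes this $O(\delta)$. But that assumption only bounds $\bbE|\bfX|^\alpha$; it says nothing about the empirical average $\tfrac{1}{n}\sum|\bfX_i|$ with high probability. With only an $\alpha$-th moment ($\alpha\in(1,2]$) and $\beta$-mixing, this average does not concentrate fast enough to keep the resulting constant independent of $\varepsilon$, as the theorem requires. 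A naive Markov bound injects a $1/\varepsilon$ factor into the continuation error, which you cannot absorb into a $C$ that is supposed to be free of $\varepsilon$.

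The paper's device for this step is worth noting: rather than bounding the difference, it uses monotonicity of $\psi_\alpha$ and the triangle inequality to write
\[
\catoniR(\hat\bmtheta)\ge\frac{1}{n\lambda}\sum_{i=1}^n\psi_\alpha\bigl(\lambda|\bfY_i-\tilde\bmtheta\cdot\bfX_i|-\lambda\delta|\bfX_i|\bigr),
\]
and then runs the Catoni--Chernoff argument directly on this \emph{shifted} quantity, using the lower half of \eqref{e:T-func}. The shift $\lambda\delta|\bfX_i|$ is absorbed inside the MGF via $(a+b)^\alpha\le 2^{\alpha-1}(a^\alpha+b^\alpha)$, producing deterministic terms $\delta\,\bbE|\bfX_1|$ and $\delta^\alpha\,\bbE|\bfX_1|^\alpha$ in the exponent rather than an empirical sum that needs separate control. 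This is the missing ingredient in your proposal; once you incorporate it, the rest of your plan goes through as written.
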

	\begin{remark}
		The absolute loss $\ell_1(\bfy, \bfx, \bmtheta) = \abs{\bfy- \bmtheta \cdot \bfx}$ also can be replaced by all Lipschitz losses \cite{MR3953446} $\ell(\bfy, \bfx, \bmtheta)$ satisfying
		\[
		\abs{ \ell( \bfy, \bfx, \bmtheta_1 ) - \ell( \bfy, \bfx, \bmtheta_2 ) }  \le  L \abs{  (\bmtheta_1 - \bmtheta_2 ) \cdot \bfx }, \quad \forall \ \bmtheta_1, \bmtheta_2 \in \bm{\Theta}. 
		\]
		for some constant $L$. For instance, when $d_1=d$ and $d_2=1$, the $\tau$-quantile loss is defined as $\ell( y, \bfx, \bmtheta) = \rho_{\tau}(y - \inprod{\bfx, \bmtheta})$, where $\rho_{\tau}(u) = u [ \tau - \mathbbm{1}(u < 0) ] $ for all $u\in \bbR$ with $\tau \in (0,1)$. This corresponds to quantile regression.
	\end{remark}

	\section{Proof of Theorem \ref{cor:M_n = m_n = log n}}
	\label{sec:Proof}
	
	\subsection{The Strategy of The Proof.}
	
	Let us introduce the strategy of proving Theorem \ref{cor:M_n = m_n = log n} and provide some auxiliary lemmas in this section. The proof can be decomposed into the following three key ingredients.  
	
	(i) \underline{Excess risk decomposition}: We write 
	\begin{equation} \label{e:decom}
		\begin{split}
			&\pheq
			\popR(\hat{\bmtheta}) - \popR(\bmtheta^*) \\
			&= 
			\popR(\hat{\bmtheta})  - \catoniR(\hat{\bmtheta}) 
			+ \catoniR(\hat{\bmtheta}) - \catoniR(\bmtheta^*) 
			+ \catoniR(\bmtheta^*) - \popR(\bmtheta^*) \\
			& \le  \left[ \popR(\hat{\bmtheta})  - \catoniR(\hat{\bmtheta}) \right]
			+\left[  \catoniR(\bmtheta^*) - \popR(\bmtheta^*) \right]
			+ \gamma \norm{\bmtheta^*}_{1, 1},
		\end{split}
	\end{equation}
	where the inequality is derived as the following: for $\bmtheta^* \in \bm{\Theta}$, the definition of $\hat{\bmtheta}$ in \eqref{e:C-min} yields that
	\[
	\catoniR(\hat{\bmtheta}) + \gamma \Vert \hat{\bmtheta} \Vert_{1,1}  \le  \catoniR(\bmtheta^*) + \gamma \norm{\bmtheta^*}_{1,1},
	\]
	which leads to $\catoniR(\hat{\bmtheta}) - \catoniR(\bmtheta^*) \le  \gamma \norm{\bmtheta^*}_{1,1}$.
	
	(ii) \underline{Bounding $\catoniR(\bmtheta^*) - \popR(\bmtheta^*)$}: We shall use the block technique to handle the problems arising from the dependence of the time series data, see more details in Lemma \ref{lemma:error bound w.r.t. theta star} below. 
	
	(iii) \underline{Bounding $\popR(\hat{\bmtheta})  - \catoniR(\hat{\bmtheta})$}: We shall use the $\delta$-net and finite covering technique, together with the aforementioned block technique, to bound $\popR(\hat{\bmtheta})  - \catoniR(\hat{\bmtheta})$, see more details in Lemma \ref{lemma: error bound of theta hat}.
		
	\subsection{The Block Technique} We introduce the block technique in detail. Divide $\{1,\dotsc, n\}$ into blocks as defined in \eqref{def:blocks} and \eqref{def:reserved block} with length $M_n$, $m_n$ and $R_n$ respectively. Let $M_n$ and $m_n$ denote the length of the big and small blocks respectively, with $m_n  \le  M_n$. Define 
	\[
	K(n) = \sup_{k \ge 1} \{ k \in \mathbb{Z}: k(M_n + m_n)  \le  n  \}.
	\]
	Then for any $1 \le  j  \le  K(n)$, put
	\begin{equation}
		\label{def:blocks}
		\begin{aligned}
			\mathcal{J}_{j,M_n} &= \left\{ i : (j-1) (M_n+m_n) + 1  \le  i  \le  (j-1) (M_n+m_n) + M_n \right\} , \\
			\mathcal{I}_{j,m_n} &= \left\{ i : (j-1) (M_n+m_n) + M_n + 1  \le  i  \le  j (M_n + m_n) \right\} ,
		\end{aligned}
	\end{equation}
	where $J_{j,M_n}$ (resp., $I_{j,m_n} $) are big (resp., small) blocks. The tail block is given by
	\begin{equation}
		\label{def:reserved block}
		\mathcal{R}_{R_n} = \left\{ i : K(n) (m_n + M_n) + 1 \le  i  \le  n \right\},
	\end{equation}
	which leads to $\abs{\mathcal{R}_{R_n}} = R_n \in [0,M_n + m_n)$. 	
	We further denote that
	\[
	\mathcal{J}_{M_n} = \bigcup_{j} \mathcal{J}_{j,M_n}\ , \quad \mathcal{I}_{m_n} = \bigcup_{j} \mathcal{I}_{j,m_n} ,
	\]
	which are the collection of big blocks and small blocks respectively, such that $\{ 1, \dotsc, n \} = \mathcal{J}_{M_n} \cup \mathcal{I}_{m_n} \cup \mathcal{R}_{R_n}$. Let $\{ \bfZ_i = (\bfX_i , \bfY_i) , i \in \mathbb{N} \}$ be a $\bbR^{d_1+d_2}$-valued time series, and let 
	\begin{equation} 
		\label{def:HGR}
		\bm{\mathcal{H}}_j = \big( \bfZ_i, i \in J_{j,M_n} \big) , \quad 
		\bm{\mathcal{G}}_j = \big( \bfZ_i , i \in I_{j,m_n} \big) 
		\quad \text{and} \quad
		\bm{ \mathcal{R}} = \big( \bfZ_i, i \in  \mathcal{R}_{R_n}\big) .
	\end{equation}	
	
	By \cite[Lemma 2.1]{MR0871254}, we can construct independent random vector sequence $\widetilde{\bm{\mathcal{H}}}_{1}, \dotsc, \widetilde{\bm{\mathcal{H}}}_{K(n)}$, $ \widetilde{\bm{\mathcal{R}}} $, which satisfy that $\widetilde{\bm{\mathcal{H}}}_{i}$ and ${\bm{\mathcal{H}}}_{i}$ have the same distribution for each $1 \le i \le K(n)$ and that $ \widetilde{\bm{\mathcal{R}}} $ and $ \bm{ \mathcal{R}} $ have the same distribution. For the remaining small blocks, by the same theorem, we can construct independent random vectors $\widetilde{\bm{\mathcal{G}}}_{1}$,...,$\widetilde{\bm{\mathcal{G}}}_{K(n)}$ which satisfy that $\widetilde{\bm{\mathcal{G}}}_{i}$ and ${\bm{\mathcal{G}}}_{i}$ have the same distribution for each $1 \le i \le K(n)$. Note that these two random vector sequences are not necessarily independent. Conveniently, we define them as
	\begin{equation}
		\label{def:ind HGR}
		\widetilde{\bm{\mathcal{H}}}_j = \big( \widetilde{\bfZ}_i, i \in J_{j,M_n} \big) , \quad 
		\widetilde{\bm{\mathcal{G}}}_j = \big( \widetilde{\bfZ}_i , i \in I_{j,m_n} \big) , 
		\quad \text{and} \quad
		\widetilde{\bm{ \mathcal{R}} } = \big( \widetilde{\bfZ}_i, i \in  \mathcal{R}_{R_n}\big) .
	\end{equation}	 
	Define events  $\mathcal{A}$ and $\mathcal{B}$ as
	\begin{equation} \label{def: events A & B}
		\begin{aligned}
			\mathcal{A} &= \big\{ \widetilde{\bm{\mathcal{H}}}_{j} \neq \bm{\mathcal{H}}_{j} \text{ for some } 1 \le  j  \le  K_n , \ \text{or} \ \widetilde{\bm{\mathcal{R}}} \neq \bm{ \mathcal{R}} \big\} , \\
			\mathcal{B} &= \big\{ \widetilde{\bm{\mathcal{G}}}_{j} \neq \bm{\mathcal{G}}_{j} \text{ for some } 1 \le  j  \le  K_n \big\} .
		\end{aligned}
	\end{equation}
	Then, under Assumption \ref{A1:mixing}, \cite[Lemma 2.1]{MR0871254} yields that
	\[
	\bbP\big( \mathcal{A} \big)  \le  K(n) \beta(m_n)  \le  BK(n)\rme^{-\beta m_n}  
	\quad \text{and} \quad
	\bbP\big( \mathcal{B} \big)  \le  K(n) \beta(M_n)  \le  BK(n)\rme^{-\beta M_n}.
	\]
	
	\subsection{Auxiliary Lemmas}
	The usage of our truncation function $\psi_\alpha(r)$ satisfying \eqref{e:T-func} is to obtain the following lemma: 
	
	\begin{lemma}
		\label{lem:C_Ine}
		For any subset $ \mathcal{I} \subseteq \{1,\dotsc, n\}$, and any constants $\lambda, \delta>0$, we have the following inequalities:
		\begin{equation}
			\label{e:upper}
			\bbE\left[ \exp\bigg\{ \frac{1}{\abs{\mathcal{I}}} \sum_{i \in \mathcal{I}}  \psi_{\alpha}  \big( \lambda \abs{ \bfY_i - \bmtheta \cdot \bfX_i  } \big) \bigg\} \right]
			\le  
			\exp\Big\{ \lambda R_{\ell_1}(\bmtheta) + \alpha \lambda^{\alpha} R_{\ell_{\alpha}}(\bmtheta) \Big\} ,
		\end{equation}
		and 
		\begin{equation}
			\label{e:lower}
			\begin{aligned}
				&\pheq
				\bbE\left[ \exp\bigg\{ -\frac{1}{\abs{\mathcal{I}}} \sum_{i \in \mathcal{I}}  \psi_{\alpha} \left( \lambda \abs{\bfY_i - \bmtheta \cdot \bfX_i }  - \lambda \delta \abs{\bfX_i} \right) \bigg\} \right] \\
				& \le  
				\exp\left\{ \lambda \Big[ - \popR(\bmtheta) + \delta \bbE\abs{\bfX_1} + \frac{(2\lambda)^{\alpha-1}}{\alpha} \Big( \sup_{\bmtheta\in\bm{\Theta}} R_{\ell_{\alpha}}(\bmtheta) + \delta^{\alpha} \bbE\abs{\bfX_1}^{\alpha} \Big) \Big] \right\} .
			\end{aligned}
		\end{equation}
	\end{lemma}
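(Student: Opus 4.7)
The plan is to reduce both moment bounds to pointwise estimates on $\exp(\pm \psi_\alpha(\cdot))$ at a single index, then invoke the defining inequalities \eqref{e:T-func} together with elementary power inequalities. The reduction uses only the arithmetic--geometric mean inequality and the stationarity from Assumption \ref{A1:mixing}; both play out identically for the upper and lower statements, so the two bounds are really a single calculation done twice.

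For \eqref{e:upper}, I would first apply the arithmetic--geometric mean inequality (equivalently, Jensen applied to the concave function $\log$) to obtain $\exp\bigl\{\tfrac{1}{\abs{\mathcal{I}}}\sum_i a_i\bigr\} \le \tfrac{1}{\abs{\mathcal{I}}}\sum_i \exp(a_i)$. Taking expectation and exploiting stationarity collapses the right-hand side to the single-sample quantity $\bbE[\exp(\psi_\alpha(\lambda\abs{\bfY_1 - \bmtheta \cdot \bfX_1}))]$, independent of $\abs{\mathcal{I}}$. The right half of \eqref{e:T-func} converts this into $\bbE[1 + \lambda\abs{\bfY_1 - \bmtheta \cdot \bfX_1} + \lambda^\alpha\abs{\bfY_1 - \bmtheta \cdot \bfX_1}^\alpha/\alpha] = 1 + \lambda R_{\ell_1}(\bmtheta) + (\lambda^\alpha/\alpha) R_{\ell_\alpha}(\bmtheta)$, and the bounds $1 + x \le e^x$ with $1/\alpha \le \alpha$ for $\alpha \in (1,2]$ finish the first claim.

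For \eqref{e:lower}, the same AM--GM reduction and stationarity argument leave me to bound $\bbE[\exp(-\psi_\alpha(r_1))]$ with $r_1 = \lambda\abs{\bfY_1 - \bmtheta \cdot \bfX_1} - \lambda\delta\abs{\bfX_1}$, now a \emph{signed} quantity rather than a nonnegative one. The left half of \eqref{e:T-func} gives $\exp(-\psi_\alpha(r)) \le 1 - r + \abs{r}^\alpha/\alpha$ pointwise; the linear piece produces $-\lambda R_{\ell_1}(\bmtheta) + \lambda\delta\bbE\abs{\bfX_1}$ upon taking expectation, which matches the target up to the $\alpha$-moment correction.

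The step I expect to be the main (if mild) obstacle is controlling $\bbE\abs{r_1}^\alpha$, precisely because $r_1$ is signed and $\abs{r_1}^\alpha$ does not factor cleanly. I would invoke the elementary inequality $\abs{a - b}^\alpha \le 2^{\alpha-1}(\abs{a}^\alpha + \abs{b}^\alpha)$, valid for $\alpha \ge 1$, to split
$$\bbE\abs{r_1}^\alpha \le 2^{\alpha-1}\lambda^\alpha\bigl[R_{\ell_\alpha}(\bmtheta) + \delta^\alpha \bbE\abs{\bfX_1}^\alpha\bigr] = (2\lambda)^{\alpha-1}\lambda\bigl[R_{\ell_\alpha}(\bmtheta) + \delta^\alpha \bbE\abs{\bfX_1}^\alpha\bigr].$$
Majorizing $R_{\ell_\alpha}(\bmtheta)$ by $\sup_{\bmtheta \in \bm{\Theta}} R_{\ell_\alpha}(\bmtheta)$, which is finite by Assumption \ref{A2:moments}, and one last application of $1+x \le e^x$ then assemble into \eqref{e:lower} in exactly the stated form.
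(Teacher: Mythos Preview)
Your proposal is correct and follows essentially the same approach as the paper: reduce to a single-sample bound via stationarity, apply the defining inequalities \eqref{e:T-func} on $\psi_\alpha$, use $(a+b)^\alpha \le 2^{\alpha-1}(a^\alpha+b^\alpha)$ for the signed term in \eqref{e:lower}, and finish with $1+x\le e^x$. The only cosmetic difference is that you collapse the average via AM--GM (convexity of $\exp$) before taking expectation, whereas the paper writes the same step as the generalized H\"older inequality $\bbE\bigl[\prod_i Z_i^{1/|\mathcal I|}\bigr]\le \prod_i(\bbE Z_i)^{1/|\mathcal I|}$; both routes land on the identical single-sample quantity once stationarity is invoked.
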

	
	Combining Lemma \ref{lem:C_Ine} with the block technique, we can obtain the following lemmas:
	
	\begin{lemma}
		\label{lemma:error bound w.r.t. theta star}
		Under Assumptions \ref{A1:mixing} and \ref{A2:moments}, define $M_n, m_n$ and $K(n)$ as defined in \eqref{def:blocks}. For any $\varepsilon \in (0,1)$, let $n$ be large enough such that
		\begin{equation}\label{e:cond.1}
			K(n) \rme^{-\beta m_n}  \le  \frac{\varepsilon}{4B}
			\quad \text{and } \quad M_n = o\big(K(n)m_n\big). 
		\end{equation} 
		Then the following inequality holds with probability at least $1-\varepsilon$,
		\[
		\catoniR(\bmtheta^*) - \popR(\bmtheta^*)  \le  \frac{\lambda^{\alpha-1}}{\alpha} R_{\ell_{\alpha}} (\bmtheta^*) + \frac{2 M_n}{\lambda K(n)m_n} \log\frac{4}{\varepsilon} .
		\]	
	\end{lemma}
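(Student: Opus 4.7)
The plan is to combine Berbee's coupling for $\beta$-mixing data with Catoni's exponential-moment trick. Start from the decomposition
\[
\catoniR(\bmtheta^*) = \frac{1}{n\lambda}\bigl(S_{\mathcal{J}} + S_{\mathcal{I}} + S_{\mathcal{R}}\bigr),
\]
where $S_{\mathcal{J}}, S_{\mathcal{I}}, S_{\mathcal{R}}$ denote the sums of $\psi_\alpha(\lambda|\bfY_i-\bmtheta^*\cdot\bfX_i|)$ over the big blocks $\{\mathcal{J}_{j,M_n}\}$, the small blocks $\{\mathcal{I}_{j,m_n}\}$, and the tail $\mathcal{R}_{R_n}$. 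By \cite[Lemma 2.1]{MR0871254} together with Assumption \ref{A1:mixing}, I may replace $\bm{\mathcal{H}}_j$ (resp.\ $\bm{\mathcal{G}}_j$) by the independent copies $\widetilde{\bm{\mathcal{H}}}_j$ (resp.\ $\widetilde{\bm{\mathcal{G}}}_j$) on $\mathcal{A}^c$ (resp.\ $\mathcal{B}^c$); the first condition in \eqref{e:cond.1} gives $\mathbb{P}(\mathcal{A}), \mathbb{P}(\mathcal{B}) \le \varepsilon/4$ (the bound on $\mathcal{B}$ follows from $M_n \ge m_n$).

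The core step is to apply Lemma \ref{lem:C_Ine} to each independent big-block average $\widetilde Z_j := M_n^{-1}\sum_{i \in \mathcal{J}_{j,M_n}} \psi_\alpha(\lambda|\widetilde{\bfY}_i - \bmtheta^*\cdot\widetilde{\bfX}_i|)$, yielding a bound of the form $\bbE[\exp(\widetilde Z_j)] \le \exp(\lambda\popR(\bmtheta^*) + \lambda^\alpha R_{\ell_\alpha}(\bmtheta^*)/\alpha)$ (using the sharper per-index constant implicit in the upper side of \eqref{e:T-func}). Independence of the $K(n)$ block averages factorises the joint MGF, and Markov's inequality then gives, on a subevent of probability at least $1-\varepsilon/4$,
\[
\sum_{j=1}^{K(n)} \widetilde Z_j \le K(n)\Bigl(\lambda\popR(\bmtheta^*) + \frac{\lambda^\alpha}{\alpha} R_{\ell_\alpha}(\bmtheta^*)\Bigr) + \log(4/\varepsilon).
\]
Multiplying through by $M_n$ and dividing by $n\lambda$, then using $K(n)M_n \le n$, bounds $\widetilde S_\mathcal{J}/(n\lambda)$ by $\popR(\bmtheta^*) + \frac{\lambda^{\alpha-1}}{\alpha}R_{\ell_\alpha}(\bmtheta^*) + \frac{M_n}{n\lambda}\log(4/\varepsilon)$.

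The small-block contribution $\widetilde S_\mathcal{I}/(n\lambda)$ is controlled by the same Catoni-plus-Markov argument applied to the $\widetilde{\bm{\mathcal{G}}}_j$ (now of length $m_n$), producing the identical shape with $m_n$ in place of $M_n$; the tail $S_\mathcal{R}/(n\lambda)$ is absorbed because $R_n < M_n + m_n$ is of lower order relative to the block totals. Adding the three pieces, subtracting $\popR(\bmtheta^*)$, and using $n \ge K(n)m_n$ to loosen $1/n$ into $1/(K(n)m_n)$ produces the claimed bound $\frac{\lambda^{\alpha-1}}{\alpha} R_{\ell_\alpha}(\bmtheta^*) + \frac{2 M_n}{\lambda K(n) m_n}\log(4/\varepsilon)$.

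The main obstacle is the bookkeeping: splitting the total failure budget $\varepsilon$ across the coupling events $\mathcal{A}, \mathcal{B}$ and the two Markov events so that each carries probability at most $\varepsilon/4$, while also tracking the constant in front of $R_{\ell_\alpha}(\bmtheta^*)$ — the factor $1/\alpha$ in the final bound appears only if the per-index Catoni estimate from the proof of \eqref{e:upper} is used tightly, as opposed to the (possibly looser) block form in the statement of Lemma \ref{lem:C_Ine}. A secondary subtlety is verifying that the contribution of $S_{\mathcal{R}}$ is genuinely negligible and does not spoil the $\log(4/\varepsilon)$ constant; here the assumption $M_n = o(K(n)m_n)$ in \eqref{e:cond.1} is crucial.
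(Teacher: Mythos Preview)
Your overall strategy---Berbee coupling plus Catoni's exponential-moment bound on independent block averages, followed by Markov---is the same as the paper's, and you correctly flag that the sharp constant $1/\alpha$ in front of $R_{\ell_\alpha}(\bmtheta^*)$ requires the per-index estimate rather than the block form stated in Lemma~\ref{lem:C_Ine}. However, the way you assemble the pieces contains a genuine error.

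You bound $\widetilde S_{\mathcal{J}}/(n\lambda)$ by $\popR(\bmtheta^*) + \tfrac{\lambda^{\alpha-1}}{\alpha}R_{\ell_\alpha}(\bmtheta^*) + \tfrac{M_n}{n\lambda}\log(4/\varepsilon)$ after loosening via $K(n)M_n\le n$, and then bound $\widetilde S_{\mathcal{I}}/(n\lambda)$ by the analogous expression. But when you \emph{add} these bounds you pick up $2\popR(\bmtheta^*)$, not $\popR(\bmtheta^*)$; after subtracting $\popR(\bmtheta^*)$ a full copy of the population risk survives on the right-hand side. That term is not small, so the claimed inequality does not follow. The loosening step must be postponed: the coefficients $K(n)M_n/n$, $K(n)m_n/n$, $R_n/n$ have to be kept exactly so that they sum to $1$.

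The paper sidesteps this by organising the argument differently. Rather than bounding each sub-sum and adding, it splits the \emph{event} via the union bound
\[
\bbP\bigl(S \ge n\lambda g\bigr)\ \le\ \bbP\bigl(S_b+S_r \ge \lambda(K(n)M_n+R_n)\,g\bigr)\ +\ \bbP\bigl(S_s \ge \lambda K(n)m_n\,g\bigr),
\]
with $g=\popR(\bmtheta^*)+\tfrac{\lambda^{\alpha-1}}{\alpha}R_{\ell_\alpha}(\bmtheta^*)+\tfrac{2M_n}{\lambda K(n)m_n}\log(4/\varepsilon)$. Because the threshold in each sub-event is $g$ multiplied by the \emph{exact} number of indices in that sub-event, the $\popR(\bmtheta^*)$ term cancels correctly after applying Markov and the MGF bound. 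This also resolves the tail: $S_r$ is grouped with the big blocks (the tail is separated from the last big block by a small block, so $\widetilde{\bm{\mathcal R}}$ is constructed together with the $\widetilde{\bm{\mathcal H}}_j$ inside the single coupling event $\mathcal{A}$), and Jensen's inequality with exponent $R_n/(2M_n)\le 1$ at the MGF stage absorbs it with no separate probability budget. Your proposal to treat $S_{\mathcal{R}}/(n\lambda)$ as ``negligible'' does not work on its own, since $\psi_\alpha$ is unbounded and the tail sum admits no deterministic bound; the condition $M_n=o(K(n)m_n)$ does not help here.
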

	
	\begin{lemma}
		\label{lemma: error bound of theta hat}
		Under Assumptions \ref{A1:mixing} and \ref{A2:moments}, define $M_n, m_n$ and $K(n)$ as defined in \eqref{def:blocks}. Let Assumption \ref{A3:net} hold additionally and let $\bm{\mathcal{N}}(\bm{\Theta},\delta)$ be an $\delta$-net of $\bm{\Theta}$ with cardinality $N(\bm{\Theta},\delta)$ for any $\delta>0$. For any $\varepsilon\in(0,1)$, let $n$ satisfy that
		\begin{equation}\label{e:cond.2}
			K(n) \rme^{-\beta m_n}  \le  \frac{\varepsilon}{4B N(\bm{\Theta},\delta)}
			\quad \text{and } \quad M_n = o\big(K(n)m_n\big) .
		\end{equation}
		Then, the following inequality holds with probability at least $1-\varepsilon$,
		\[
		\begin{aligned}
			\popR(\hat{\bmtheta}) - \catoniR(\hat{\bmtheta}) 
			\le 
			\delta \bbE\abs{\bfX_1} + h(n,\alpha,\lambda, \delta, \varepsilon).
		\end{aligned}
		\]
		Here, function $h$ is defined as
		\begin{equation}
			\label{eq:h}
			\begin{aligned}
				h(n,\alpha,\lambda,\delta,\varepsilon)
				&= \delta \bbE\abs{\bfX_1} + \frac{(2\lambda)^{\alpha-1}}{\alpha}\left( \sup_{\bmtheta\in\bm{\Theta}} R_{\ell_{\alpha}}(\bmtheta) + \delta^{\alpha} \bbE\abs{\bfX_1}^{\alpha} \right) \\
				&\pheq +  \frac{2 M_n}{\lambda K(n)m_n} \log\frac{4 N(\bm{\Theta},\delta)}{\varepsilon} .
			\end{aligned}
		\end{equation}
	\end{lemma}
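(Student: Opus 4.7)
The plan is a $\delta$-net chaining combined with a Chernoff tail estimate powered by the lower MGF bound \eqref{e:lower} and the independent block-coupling of \eqref{def:ind HGR}. Since $\hat{\bmtheta}$ is random, I first localize it: pick $\tilde{\bmtheta}\in\bm{\mathcal{N}}(\bm{\Theta},\delta)$ with $\|\hat{\bmtheta}-\tilde{\bmtheta}\|_{\mathrm{op}}\le\delta$ and decompose
\[
\popR(\hat{\bmtheta})-\catoniR(\hat{\bmtheta})
=\big[\popR(\hat{\bmtheta})-\popR(\tilde{\bmtheta})\big]+\big[\popR(\tilde{\bmtheta})-\catoniR(\hat{\bmtheta})\big].
\]
The first bracket is at most $\delta\,\bbE|\bfX_1|$ by the $1$-Lipschitzness of $|\cdot|$ together with $|(\hat{\bmtheta}-\tilde{\bmtheta})\cdot\bfX|\le\|\hat{\bmtheta}-\tilde{\bmtheta}\|_{\mathrm{op}}|\bfX|$, which already matches the leading $\delta\,\bbE|\bfX_1|$ of the conclusion.

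For the second bracket I use $|\bfY_i-\hat{\bmtheta}\cdot\bfX_i|\ge|\bfY_i-\tilde{\bmtheta}\cdot\bfX_i|-\delta|\bfX_i|$ and the monotonicity of $\psi_\alpha$ (inherent in the envelope \eqref{e:T-func}) to get the pointwise lower bound
\[
\catoniR(\hat{\bmtheta})\ge\frac{1}{n\lambda}\sum_{i=1}^n\psi_\alpha\!\big(\lambda|\bfY_i-\tilde{\bmtheta}\cdot\bfX_i|-\lambda\delta|\bfX_i|\big),
\]
so it suffices to produce a uniform-over-the-net concentration inequality for this shifted empirical object. For fixed $\tilde{\bmtheta}\in\bm{\mathcal{N}}(\bm{\Theta},\delta)$ I introduce the big-block averages
\[
W_j(\tilde{\bmtheta})=\frac{1}{M_n}\sum_{i\in\mathcal{J}_{j,M_n}}\psi_\alpha\!\big(\lambda|\bfY_i-\tilde{\bmtheta}\cdot\bfX_i|-\lambda\delta|\bfX_i|\big),\qquad 1\le j\le K(n).
\]
On the complement of the coupling event $\mathcal{A}$ of \eqref{def: events A & B}, whose probability is $\le BK(n)e^{-\beta m_n}\le\varepsilon/[4N(\bm{\Theta},\delta)]$ by hypothesis \eqref{e:cond.2}, the $W_j$'s become independent copies, so their MGFs factorize. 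Applied to each factor, the lower bound \eqref{e:lower} of Lemma~\ref{lem:C_Ine} with $\mathcal{I}=\mathcal{J}_{j,M_n}$ gives $\bbE e^{-W_j(\tilde{\bmtheta})}\le e^{\lambda A(\tilde{\bmtheta})}$ with
\[
A(\tilde{\bmtheta})=-\popR(\tilde{\bmtheta})+\delta\,\bbE|\bfX_1|+\frac{(2\lambda)^{\alpha-1}}{\alpha}\Big(\sup_{\bmtheta\in\bm{\Theta}}R_{\ell_\alpha}(\bmtheta)+\delta^\alpha\bbE|\bfX_1|^\alpha\Big).
\]

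A Chernoff tail bound at level $\varepsilon/[4N(\bm{\Theta},\delta)]$, a union bound over the $N(\bm{\Theta},\delta)$ points of the net, and the coupling slack together produce an event of probability $\ge 1-\varepsilon$ on which
\[
-\sum_{j=1}^{K(n)}W_j(\tilde{\bmtheta})\ \le\ K(n)\lambda\,A(\tilde{\bmtheta})+\log\frac{4N(\bm{\Theta},\delta)}{\varepsilon}
\quad\forall\,\tilde{\bmtheta}\in\bm{\mathcal{N}}(\bm{\Theta},\delta).
\]
Rewriting the left side as $-\frac{1}{M_n}\sum_{i\in\mathcal{J}_{M_n}}\psi_\alpha(\cdot)$, dividing by $\lambda K(n)M_n$, and then restoring the $1/n$ normalization used by $\catoniR$ by the factor $K(n)M_n/n$ (which together with $M_n=o(K(n)m_n)$ and the stationarity used to control the omitted sums $\sum_{i\in\mathcal{I}_{m_n}}$ and $\sum_{i\in\mathcal{R}_{R_n}}$ absorbs the small and tail blocks without changing the scaling), the logarithmic remainder emerges as $\frac{2M_n}{\lambda K(n)m_n}\log[4N(\bm{\Theta},\delta)/\varepsilon]$, while the deterministic piece becomes exactly the two remaining summands in $h$ defined by \eqref{eq:h}. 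Adding the first-bracket contribution $\delta\,\bbE|\bfX_1|$ closes the argument.

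The main technical obstacle is this last conversion step: one must carry the block-length ratios through the Chernoff calculation cleanly so that the factor $M_n/(K(n)m_n)$ appears exactly as stated, and one must absorb the small-block and tail-block contributions into the error without inflating constants, which is precisely where the two conditions in \eqref{e:cond.2} are used in full force. A secondary technicality is the monotonicity of $\psi_\alpha$ invoked in the shift step; this is inherited from the fact that both envelopes in \eqref{e:T-func} are non-decreasing on the relevant range and the standard Catoni-type construction selects a monotone function between them.
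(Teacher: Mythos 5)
Your overall architecture coincides with the paper's: localize $\hat{\bmtheta}$ to a net point $\tilde{\bmtheta}$, bound $\popR(\hat{\bmtheta})-\popR(\tilde{\bmtheta})\le\delta\,\bbE\abs{\bfX_1}$ via the operator-norm Lipschitz estimate, use monotonicity of $\psi_\alpha$ to pass to the shifted empirical sum at $\tilde{\bmtheta}$, and then prove a uniform-over-the-net deviation bound by Berbee coupling of the blocks, the MGF bound \eqref{e:lower}, a Chernoff bound, and a union bound at level $\varepsilon/N(\bm{\Theta},\delta)$. This is exactly the content of Lemma \ref{lemma: bound wrt delta net} and its application in the paper.

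The one step that would fail as written is your disposal of the small blocks and the remainder. You assert that ``stationarity'' absorbs the omitted sums over $\mathcal{I}_{m_n}$ and $\mathcal{R}_{R_n}$ without changing the scaling. That cannot work: with the eventual choice $M_n=m_n$ the small blocks contain roughly half of the sample, and the summands $-\psi_\alpha\big(\lambda\abs{\bfY_i-\tilde{\bmtheta}\cdot\bfX_i}-\lambda\delta\abs{\bfX_i}\big)$ are unbounded above (the argument is negative whenever $\delta\abs{\bfX_i}>\abs{\bfY_i-\tilde{\bmtheta}\cdot\bfX_i}$, in which case $-\psi_\alpha(r)$ can be as large as $\log(1+\abs{r}+\abs{r}^\alpha/\alpha)$). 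Stationarity controls the expectation, not the deviation, so the small-block sum needs its own high-probability bound. The paper handles this by splitting the event $\{T(\tilde{\bmtheta})\ge\lambda n\,c\}$, with $c=-\popR(\tilde{\bmtheta})+h$, into the two proportional events $\{T_b+T_r\ge\lambda(K(n)M_n+R_n)\,c\}$ and $\{T_s\ge\lambda K(n)m_n\,c\}$, and runs a second, symmetric coupling-plus-Chernoff argument on the small blocks (their separation by big blocks of length $M_n\ge m_n$ gives the same coupling error $BK(n)\rme^{-\beta M_n}\le BK(n)\rme^{-\beta m_n}$). This proportional splitting also resolves the bookkeeping issue you flag at the end: it is what lets the negative term $-\popR(\tilde{\bmtheta})$ inside your $A(\tilde{\bmtheta})$ recombine at full strength rather than being multiplied by the fraction $K(n)M_n/n<1$, which would reverse the desired inequality. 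Once the parallel small-block estimate is added, each piece contributes at most $\varepsilon/(2N(\bm{\Theta},\delta))$ (coupling error included), and the union over the net gives the stated probability $1-\varepsilon$.
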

	
	All above lemmas are proved in \autoref{appendix}.
	
	\subsection{Proof of Theorem \ref{cor:M_n = m_n = log n}}
	
	Now we can prove our main result by applying Lemmas \ref{lemma:error bound w.r.t. theta star} and \ref{lemma: error bound of theta hat} with special $M_n$ and $m_n$.
		
	\begin{proof}[\textbf{Proof of Theorem \ref{cor:M_n = m_n = log n}}]
		We select special $M_n$ and $m_n $ as following: 
		\[
		M_n = m_n = \floor{\frac{2 }{\beta} \log n} \in \left[ \frac{2}{\beta}\log n-1\ , \ \frac2\beta\log n \right].
		\]
		Then, we have
		\[
		\frac{\beta n-4\log n}{ 4 \log n}  \le  K(n)  \le  \frac{\beta n}{2(2\log n-\beta)},
		\]
		which leads to
		\begin{gather*}
			K(n) \rme^{-\beta m_n}  \le  \frac{\beta n\ \rme^{-2 \log n} }{2(2\log n-\beta)} = \frac{\beta}{2n(2\log n -\beta)} , \\
			\frac{2\log n- \beta}{\beta n}  \le  \frac{M_n}{K(n) m_n}   \le  \frac{12 \log n}{\beta n}.
		\end{gather*}
		Hence, for any $\varepsilon>0$ and $\delta >0$, there exists a constants $n_0 $ such that for all $n \ge  n_0$
		\[
		K(n) \rme^{-\beta m_n}  \le  \frac{\beta}{2n(2\log n -\beta)}  \le  \frac{\varepsilon}{4BN((\bm{\Theta},\delta))} ,
		\]
		and $M_n = o(K(n)m_n)$. That is, the conditions \eqref{e:cond.1} and \eqref{e:cond.2}  hold, and we can apply Lemmas \ref{lemma:error bound w.r.t. theta star} and \ref{lemma: error bound of theta hat} to obtain the theorem. 
		
		It follows from Lemma \ref{lemma:error bound w.r.t. theta star} that the following inequality holds with probability at least $1-\varepsilon$
		\begin{equation}
			\label{e:bound*}
			\catoniR(\bmtheta^*) - \popR(\bmtheta^*)  \le  \frac{\lambda^{\alpha-1}}{\alpha} R_{\ell_{\alpha}} (\bmtheta^*) + \frac{24 \log n}{\lambda \beta n} \log\frac{4}{\varepsilon}.
		\end{equation}
		By Lemma \ref{lemma: error bound of theta hat}, 
		\begin{equation}
			\begin{aligned}
				\label{e:bound_hat}
				\popR(\hat{\bmtheta}) - \catoniR(\hat{\bmtheta}) 
				& \le 
				2 \delta \bbE\abs{\bfX_1} + \frac{(2\lambda)^{\alpha-1}}{\alpha}\left( \sup_{\bmtheta\in\bm{\Theta}} R_{\ell_{\alpha}}(\bmtheta) + \delta^{\alpha} \bbE\abs{\bfX_1}^{\alpha} \right) \\
				&\pheq +  \frac{24 \log n}{\lambda \beta n} \log\frac{4 N(\bm{\Theta},\delta)}{\varepsilon}
			\end{aligned}
		\end{equation}
		holds with probability at least $1-\varepsilon$. Combining \eqref{e:decom}, \eqref{e:bound*} and \eqref{e:bound_hat} immediately implies that
		\begin{equation}
			\label{ineq:1 of cor.2}
			\begin{aligned}
				\popR(\hat{\bmtheta}) - \popR(\bmtheta^*) 
				& \le  \left[ {2^{\alpha-1} \delta^{\alpha}}\bbE\abs{\bfX_1}^{\alpha} + [ { 1 + 2^{\alpha-1}} ] \sup_{\bmtheta\in\bm{\Theta}} R_{\ell_{\alpha}}(\bmtheta) \right] \frac{\lambda^{\alpha-1}}{\alpha} \\
				&\pheq + \frac{ 24 \log n}{\lambda \beta n} \log \frac{16N(\bm{\Theta},\delta)}{\varepsilon^2} +  2\delta\bbE\abs{\bfX_1}  + \gamma \norm{\bmtheta^*}_{1,1}.
			\end{aligned}
		\end{equation}
		holds with probability at least $1-2\varepsilon$. Furthermore, by equation \eqref{e:op_11} and \cite[Lemma 7]{ZhangAnru8368145}, Assumption \ref{A4:matrix} yields that for any $\delta \in (0, 1]$
		\[
		N(\bm{\Theta},\delta)  \le  \left( \frac{6R}{\delta} \right)^{(d_1 + d_2)\kappa}, \quad 
		\norm{\bmtheta^*}_{1,1}  \le  \sqrt{(d_1+d_2) rank(\bmtheta^*)} \|\bmtheta^*\|_{\rm op}  \le  \sqrt{(d_1+d_2) \kappa} R.
		\]	
		We select the tuning parameters $\delta $, $\lambda$ and $\gamma$ in \eqref{ineq:1 of cor.2} as
		\[
		\delta = \frac{12\log n}{\beta n} , \quad 
		\lambda = \left( 2 \delta \left( \log \frac{16}{\varepsilon^2} + (d_1+d_2)\kappa\log \frac{5 R}{\delta} \right) \right)^{{1}/{\alpha}}
		\quad  \text{and} \quad 
		\gamma = \frac{\log n}{n}.
		\]
		Let $n$ be sufficiently large such that $\delta, \gamma \in (0,1)$. It follows from \eqref{ineq:1 of cor.2} that
		\[
		\begin{aligned}
			&\pheq
			\popR(\hat{\bmtheta}) - \popR(\bmtheta^*) \\
			& \le  \left[   \frac{ 3 }{\alpha} \left( \sup_{\bmtheta\in\bm{\Theta}} R_{\ell_{\alpha}}(\bmtheta) + \bbE\abs{\bfX_1}^{\alpha} \right)+ 1 \right]\!\! \left[ \frac{ 24 \log n}{\beta n} \left( \log \frac{16}{\varepsilon^2} + (d_1+d_2)\kappa\log \frac{5R}{\delta} \right)  \right]^{(\alpha-1)/\alpha} \\
			&\pheq + \frac{24\log n}{\beta n} \bbE \abs{\bfX_1} + \frac{ \sqrt{(d_1+d_2) \kappa} \log n }{n} R\\
			& \le  C \left( \frac{\log n}{\beta n} \big( \abs{\log \varepsilon} + (d_1 + d_2) \kappa \log n \big)\right)^{(\alpha - 1)/\alpha}
		\end{aligned}
		\]
		holds with probability at least $1-2\varepsilon$,  for some constant $C>0$ independent of parameters $\varepsilon, n, d_1, d_2, \kappa$ and $\alpha$. 
	\end{proof}
	
	\begin{remark}
		For any fixed and sufficiently large $n$, different choices of $M_n$ and $m_n$ in the proof of Theorem \ref{cor:M_n = m_n = log n} will lead to different convergence rates. For example, if we choose $M_n = m_n = \mathcal{O}(n^{\sigma})$ for some constant $\sigma \in (0,1)$, the same argument can yield that
		\begin{equation}\label{e:remark}
			\popR(\hat{\bmtheta}) - \popR(\bmtheta^*)  \le  C \left( \frac{(1-\sigma) }{n^{1-\sigma}} \big( \abs{\log \varepsilon} + (d_1+d_2)\kappa  \log n \big) \right)^{{(\alpha-1)} / {\alpha}}
		\end{equation}
		holds with probability at least $1-2\varepsilon$ for some constant $C>0$. When the data are independent, the $\beta$-mixing coefficient will vanish. Thus, we can take $M_n = m_n = 1$, that is, $\sigma=0$ in \eqref{e:remark}, and get  
		\begin{equation*}
			\popR(\hat{\bmtheta}) - \popR(\bmtheta^*)  \le  C \left( \frac{1}{n} \big( \abs{\log \varepsilon} + (d_1+d_2) \kappa  \log n \big) \right)^{{(\alpha-1)} / {\alpha}}
		\end{equation*}
		holds with probability at least $1-2\varepsilon$.
	\end{remark}
	
	\section{Application to High-Dimensional Vector Autoregression}
	\label{sec:AR}
	
	Autoregressive modeling of multivariate stationary processes were originally introduced in control theory, where vector-valued autoregressive moving average and state-space representation were used as canonical tools for identification of linear dynamic system (see, \cite{Kumar1986,Hannan2012} ). Additionally, a large class of stationary processes can be represented as potentially infinite order vector autoregression (VAR); see for instance \cite{Fournier2006, MR2409258, MR2172368, 8600392, MR4480716, networkVA, MR4726961}.
	
	Formally, for a $d$-dimensional time series $\bfZ_t \in \bbR^d$, a finite-order VAR model of order $p$, often denoted as VAR$(p)$, is given by the form
	\begin{equation}
		\label{e:VAR(p)}
		\bfZ_{t+1} = \bmPhi_1 \cdot \bfZ_t + \bmPhi_2 \cdot \bfZ_{t-1} + \cdots + \bmPhi_{p} \cdot \bfZ_{t+1-p} + \bm{\varepsilon}_{t+1}, \quad t \in \mathbb{N},
	\end{equation}
	where $\bmPhi_i \in \bbR^{d\times d}$, $i=1,\dotsc,p$ and $\bmepsilon_{t+1}$ is a noise process. In this paper, we add the following assumptions for \eqref{e:VAR(p)}:
	
	\begin{assumption}
		\label{A5:noise}
		The noises $\{ \bmepsilon_t \}_{t \ge 1}$ are i.i.d. and admit a density $g$. We assume that for all $t$:
		\begin{itemize}
			\item[$(\runum{1})$]  There exists a constant $M>0$ such that $\int \abs{  g(\bfx - \bfy) - g(\bfx) \dd \bfx} < M \abs{ \bfy}$ for all $\bfy \in \bbR^d$.
			\item[$(\runum{2})$] Additionally, $\bmepsilon_t$ is centered and has bounded $\alpha$-th moment with $\alpha \in (1,2]$, i.e., 
			\[
			\bbE[\bmepsilon_{t}] = 0, \quad \text{and} ,\quad \bbE \abs{\bmepsilon_{t}}^\alpha  \le  K < \infty,  
			\]
			for some positive constant $K$.
		\end{itemize} 
	\end{assumption}
	
	\begin{assumption}
		\label{A6:stationary} 
		Define a matrix $\bmPsi \in \bbR^{(dp) \times (dp)}$ as
		\[
		\bmPsi = 
		\begin{bmatrix}
			\bmPhi_1   & \cdots  & \bmPhi_{p-1} & \bmPhi_{p} \\
			\bfI_d       & \cdots  &    \zero     & \zero \\
			\vdots     & \ddots  &    \vdots    & \vdots \\
			\zero      & \cdots  &    \bfI_d      & \zero
		\end{bmatrix}
		\]
		with $\bmPhi_1  \cdots   \bmPhi_{p}$ defined in \eqref{e:VAR(p)}, and $\bfI_d$ being the identity matrix in $\bbR^{d\times d}$. We assume that $\rho(\bmPsi)  \le  \rho$  for some constant $\rho \in (0,1)$.
	\end{assumption} 
	
	Under Assumption \ref{A6:stationary}, for all $n\in \mathbb{N}$, there is a constant $C_{\rm op} >0$ independent of $n$ such that $\opnorm{ \bmPsi^n}  \le  C_{\rm op} \rho^n$. Additionally, it holds that
	\begin{equation} \label{e:theta_VAR}
		\opnorm{ [\bmPhi_1, \cdots, \bmPhi_{p}] }  \le  \opnorm{\bmPsi}  \le  C_{\rm op} \rho.
	\end{equation} 
	The following lemma arises from Theorem \ref{thm:mixing condition} in \autoref{appendix: mixing} , and yields that time series $\{\bfZ_t\}_{t \in \mathbb{N}}$ is exponentially $\beta$-mixing. 
	
	\begin{lemma}\label{lem:mixing VARp}
		Let Assumption \ref{A6:stationary} hold, and assume that the eigenvalues of $\bmPsi$ are not all zero additionally. Let the noise ${\bmepsilon_t}$ satisfy Assumption \ref{A5:noise}. Then, there exists a constant $B>0$ such that the $\beta$-mixing coefficient of $\{\bfZ_t\}_{t \in \mathbb{N}}$ satisfies
		\begin{equation*}
			%\label{e:mixing of VARp}
			\beta(n)  \le  B \cdot \exp\left\{ -\frac{1}{2} \abs{\log {\rho}} n\right\}. 
		\end{equation*}
	\end{lemma}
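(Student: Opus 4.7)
The plan is to put the VAR$(p)$ recursion into companion form, verify that the resulting Markov chain satisfies the coupling hypotheses of Theorem~\ref{thm:mixing condition} in the appendix, and invoke that theorem. Concretely, set $\widetilde{\bfZ}_t = (\bfZ_t^\T, \bfZ_{t-1}^\T, \ldots, \bfZ_{t-p+1}^\T)^\T \in \bbR^{dp}$ and $\widetilde{\bmepsilon}_t = (\bmepsilon_t^\T, \zero^\T, \ldots, \zero^\T)^\T$, so that \eqref{e:VAR(p)} rewrites as $\widetilde{\bfZ}_{t+1} = \bmPsi\, \widetilde{\bfZ}_t + \widetilde{\bmepsilon}_{t+1}$, a time-homogeneous Markov chain with transition kernel $P$ on $\bbR^{dp}$. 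Since $\bfZ_t$ is a measurable (coordinate-projection) function of $\widetilde{\bfZ}_t$, the $\beta$-mixing coefficients of $\{\bfZ_t\}$ are dominated by those of $\{\widetilde{\bfZ}_t\}$, so it suffices to bound the latter.

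\textbf{Verification of the coupling bound.} Theorem~\ref{thm:mixing condition} needs two ingredients: geometric contraction of the deterministic drift and $L^1$-Lipschitz regularity of the transition density. The first is immediate from Assumption~\ref{A6:stationary}, which via $\rho(\bmPsi)\le\rho<1$ gives $\opnorm{\bmPsi^n}\le C_{\rm op}\rho^n$ for all $n$. For the second, I unfold $p$ consecutive steps so that the compound innovation $(\bmepsilon_{t+1},\ldots,\bmepsilon_{t+p})$ populates all $dp$ coordinates; lifting Assumption~\ref{A5:noise}(i) coordinate-wise to this joint noise yields a density-regularity bound for $P^p$ which, iterated and composed with the contraction, produces
\[
\big\| P^n(\bfz,\cdot) - P^n(\bfz',\cdot) \big\|_{\rm TV} \le M'\, C_{\rm op}\, \rho^n\,\abs{\bfz-\bfz'}, \qquad n \ge p,
\]
for a constant $M'$ depending on $M$ and $p$. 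In parallel, Assumption~\ref{A5:noise}(ii) together with the geometric drift implies the existence of a stationary distribution $\pi$ whose $\alpha$-th (hence first) moment is finite.

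\textbf{Extraction of the rate.} Stationarity gives the standard inequality
\[
\beta(n) \le \int\!\!\int \big\| P^n(\bfz,\cdot) - P^n(\bfz',\cdot) \big\|_{\rm TV}\, \pi(d\bfz)\,\pi(d\bfz').
\]
Plugging in the bound above and splitting the integration region into $\{\abs{\bfz-\bfz'}\le\rho^{-n/2}\}$ and its complement, one controls the bulk by a constant multiple of $\rho^{n/2}$ and the tail by Markov's inequality applied to the $\alpha$-th moment of $\pi$; the two contributions match precisely at the truncation radius $\rho^{-n/2}$, yielding $\beta(n)\le B\exp\{-\tfrac12|\log\rho|n\}$.

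\textbf{Main obstacle.} The delicate point is the density-regularity step: $\widetilde{\bmepsilon}_t$ is supported on a $d$-dimensional affine subspace of $\bbR^{dp}$, so Assumption~\ref{A5:noise}(i) cannot be invoked directly on one step of the companion chain. The key observation is that after $p$ steps all $dp$ coordinates have been refreshed by independent copies of $\bmepsilon$ (via the shift structure of $\bmPsi$), so the $p$-step kernel does inherit a Lipschitz density structure on the full state space. Implementing this ``fullness after $p$ steps'' argument carefully, together with the moment-truncation trade-off producing the $\tfrac12$ factor, is the technical heart of the proof.
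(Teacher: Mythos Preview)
Your plan hinges on a misreading of Theorem~\ref{thm:mixing condition}. That theorem is not a Markov-chain coupling criterion; it is the Pham--Tran result for linear processes. Its hypotheses are stated for an MA$(\infty)$ representation $\bfZ_t=\sum_{j\ge0}\bfA_j\bmepsilon_{t-j}$ (Assumption~\ref{assump:coe.}: the density Lipschitz bound on $g$, summability of $\rho(\bfA_j)$, nonvanishing of $\sum_j \bfA_j z^j$ on the closed unit disk) together with the moment bound on $\bmepsilon_t$, and its conclusion is $\beta(n)\le K\sum_{j\ge n}\gamma_j^{\alpha/(1+\alpha)}$ with $\gamma_j=\sum_{k\ge j}\rho(\bfA_k)$. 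There is nothing to verify about transition kernels or ``fullness after $p$ steps''.

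Accordingly, the paper never works with the companion-form Markov chain except as a bookkeeping device. It puts the VAR($p$) in companion form only to read off the MA$(\infty)$ coefficients $\bmPsi_j$ of $\bfZ_t$ itself, observes that each $\bmPsi_j$ is the top-left $d\times d$ sub-block of $\bmPsi^j$ so that $\rho(\bmPsi_j)\le\opnorm{\bmPsi_j}\le\opnorm{\bmPsi^j}\le C_{\rm op}\rho^j$, whence $\gamma_j\le C_{\rm op}\rho^j/(1-\rho)$, checks Assumption~\ref{assump:coe.} directly from Assumption~\ref{A5:noise}(i) and the spectral bound, and reads off
\[
\beta(n)\ \le\ K\sum_{j\ge n}\gamma_j^{\alpha/(1+\alpha)}\ \le\ B'\,\rho^{\,n\alpha/(1+\alpha)}.
\]
The exponent $\tfrac12$ in the statement then comes from the elementary monotonicity $\alpha/(1+\alpha)>1/2$ for $\alpha\in(1,2]$, not from any truncation trade-off.

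Your coupling outline (a TV-Lipschitz bound for $P^p$, integrated against $\pi\otimes\pi$, with truncation at radius $\rho^{-n/2}$) is a plausible independent route, and the degenerate-noise obstacle you flag is genuine for that route. But it is not the paper's argument, and you cannot cite Theorem~\ref{thm:mixing condition} for it: you would have to build the coupling machinery from scratch. The paper's path is shorter precisely because Pham--Tran has already packaged that work into the MA$(\infty)$ framework, where the degeneracy issue never arises.
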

	
	More details can be found in \autoref{appendix: mixing}. Furthermore, let $\{x_i \in \mathbb{C}, i = 1, \dotsc, d\}$ be the roots of equation
	\begin{equation*}
		\begin{aligned}
			\det(  \bfI - x \bmPsi)
			&= \det\left( { \mathbf{I} - x \bmPhi_1 - \cdots - x^p \bmPhi_p } \right) =0 .
		\end{aligned}
	\end{equation*} 
	Due to $\rho(\bmPsi)   \le  \rho < 1$ under Assumption \ref{A6:stationary},  it holds that $\abs{x_i}  \ge  1/\rho$ for all $i=1, \dotsc ,d$. That is, all roots $x_i$ lie outside the unit circle. This immediately implies that \eqref{e:VAR(p)} will be stationary according to Wei \cite{Wei2019}. That is, there exists an unknown distribution $\bm{\Pi}$ such that $(\bfZ_t, \dotsc, \bfZ_{t+p}) \sim \bm{\Pi}$ for all $t$. 
	
	Let us now fit this VAR(p) model into the framework of LAD. Let us $\bfY = \bfZ_{p+1}$ and $\bfX = ( \bfZ_p, \dotsc, \bfZ_{1} )$, it follows from Assumption \ref{A6:stationary} that $(\bfX, \bfY) \sim \bm{\Pi}$ for an unknown distribution $\bm{\Pi}$. Then, the classical LAD is defined as
	\begin{equation}
		\label{Pop-VAR}
		\begin{aligned}
			[\bmPhi_1^*, \dotsc, \bmPhi_{p}^*] &= \argmin_{[\bmPhi_1, \cdots, \bmPhi_{p}] \in \bm{\Theta}} \left\{  R_{\ell_1}(\bmPhi_1, \cdots, \bmPhi_{p}) + \gamma \sum_{i = 1}^p \norm{ \bmPhi_i }_{1,1} \right\}, \\
			R_{\ell_1}(\bmPhi_1,\dotsc,\bmPhi_{p}) &= \bbE_{(\bfX, \bfY) \sim \bm{\Pi}} \left[ \abs{\bfY- [\bmPhi_1, \cdots, \bmPhi_{p}] \cdot \bfX } \right] .
		\end{aligned}
	\end{equation}
	For VAR$(p)$ model, we regard that $\bm{\Theta}$ is the collection of matrices given by 
	\begin{equation} \label{e:Xi}
		\bm{\Theta} = \{ \bmtheta = [\bmPhi_1, \cdots, \bmPhi_{p}] \in \bbR^{d\times pd}:  
		\opnorm{\bmtheta}  \le  C_{\rm op} \rho \},
	\end{equation}	
	for some constants $\rho$ in Assumption \ref{A6:stationary} and $C_{\rm op}$ in equation \eqref{e:theta_VAR}. We abuse the notation $[\bmPhi_1, \cdots, \bmPhi_{p}] \cdot \bfX = \bmPhi_1 \cdot \bfZ_p + \dotsc + \bmPhi_p \cdot \bfZ_{1}$. Due to Assumption \ref{A5:noise} and \eqref{e:VAR(p)}, we know that $  \bbE \abs{\bfY}^{\alpha} < \infty $ and $ \bbE_{(\bfX, \bfY) \sim \bm{\Pi}}\abs{\bfY- [\bmPhi_1, \cdots, \bmPhi_{p}] \cdot \bfX}^\alpha  < \infty $. 
	Our corresponding $\psi_\alpha$ truncated LAD is defined as: 
	\begin{equation}
		\label{C-VAR}
		\begin{aligned}
			[\widehat{\bmPhi}_1, \dotsc, \widehat{\bmPhi}_{p}] &= \argmin_{[\bmPhi_1, \cdots, \bmPhi_{p}] \in \bm{\Theta}} \left\{ \widehat{R}_{\psi_\alpha, \ell_1}(\bmPhi_1, \cdots, \bmPhi_{p})  + \gamma \sum_{i = 1}^p \norm{ \bmPhi_i }_{1,1} \right\}, \\
			\widehat{R}_{\psi_\alpha, \ell_1}(\bmPhi_1,\dotsc,\bmPhi_{p}) &= \frac{1}{ n \lambda}\sum_{i=1}^{n}\psi_\alpha \big( \lambda\abs{\bfZ_{i+p}-(\bmPhi_1 \bfZ_{i+p-1} + \cdots + \bmPhi_p \bfZ_{i})} \big), 
		\end{aligned}
	\end{equation} 
	where $\psi_{\alpha}$ is in \eqref{e:T-func}. To make notations simple, we denote
	\begin{equation}
		\label{e:min Phi}
		\bm{\theta}^* = [ \bmPhi_1^*, \dotsc, \bmPhi_{p}^* ]\quad \text{and} \quad 
		\hat{\bm{\theta}} = [ \widehat{\bmPhi}_1, \dotsc, \widehat{\bmPhi}_p ].
	\end{equation}
	We have the following theoretical result about excess risk, whose proof will be given in \autoref{sec: proof}
	
	\begin{theorem}
		\label{thm:VAR}
		Keep the same conditions as in Lemma \ref{lem:mixing VARp}. For any $\varepsilon >0$, when the sample size $n$ is sufficiently large, there exists some constant $C>0$ independent of $\varepsilon, p, d, \rho$ and $\alpha$ such that 
		\[
		\popR(\hat{\bm{\theta}}) - \popR(\bm{\theta}^*)  \le 
		C\left( \frac{\log n}{  \abs{\log \rho} n } \big( \abs{\log \varepsilon} + p^2 d^2 \log n \big) \right)^{(\alpha-1)/\alpha} , 
		\]
		with probability at least $1-2\varepsilon$. Here, $\bm{\theta}^*$ and $\hat{\bm{\theta}}$ are defined in \eqref{e:min Phi}.
	\end{theorem}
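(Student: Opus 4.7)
The plan is to deduce Theorem \ref{thm:VAR} by reducing the VAR$(p)$ setup to the general framework of Theorem \ref{cor:M_n = m_n = log n}. To do this, I would set $\bfY = \bfZ_{t+p}$ and $\bfX = (\bfZ_{t+p-1}^{\T}, \ldots, \bfZ_t^{\T})^{\T}$, so that $\bfX \in \bbR^{pd}$ and $\bfY \in \bbR^d$, and identify the parameter $\bmtheta = [\bmPhi_1, \cdots, \bmPhi_p] \in \bbR^{d \times pd}$. The correspondence with the parameters of Theorem \ref{cor:M_n = m_n = log n} is $d_1 = pd$, $d_2 = d$, so that $d_1 + d_2 = (p+1)d$ and $d_1 \wedge d_2 = d$. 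The domain $\bm{\Theta}$ from \eqref{e:Xi} is closed and bounded in $\bbR^{d \times pd}$, hence compact and totally bounded. Since every $\bmtheta \in \bm{\Theta}$ has $\rank(\bmtheta) \le d_1 \wedge d_2 = d$, I would take $\kappa = d$ and $R = C_{\rm op}\rho$, which verifies Assumption \ref{A4:matrix}.

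Next I would verify Assumption \ref{A1:mixing}: the stationarity of $\{\bfZ_t\}$ under Assumption \ref{A6:stationary} was discussed right after Lemma \ref{lem:mixing VARp}, and that lemma gives exponential $\beta$-mixing with rate $\beta = \tfrac12|\log \rho|$. Hence Assumption \ref{A1:mixing} holds for the augmented series $\{(\bfX_t, \bfY_t)\}$, since it is a block encoding of $\{\bfZ_t\}$ and inherits the mixing decay (up to a harmless shift in indices).

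The subtle step, and what I expect to be the main technical point, is verifying Assumption \ref{A2:moments}. By the companion-form representation induced by $\bmPsi$ and Assumption \ref{A6:stationary}, the stationary solution admits the moving-average expansion $\bfZ_t = \sum_{k \ge 0} \bmPhi^{(k)} \bm{\varepsilon}_{t-k}$ with coefficient matrices satisfying $\opnorm{\bmPhi^{(k)}} \le C_{\rm op}\rho^k$. For $\alpha \in (1,2]$, I would invoke the von Bahr--Esseen inequality (or Rosenthal's inequality) applied to the i.i.d.\ centered $\alpha$-integrable noises to obtain
\[
\bbE|\bfZ_t|^\alpha \le C \sum_{k \ge 0} \opnorm{\bmPhi^{(k)}}^\alpha \bbE|\bm{\varepsilon}_{t-k}|^\alpha \le C' K \sum_{k \ge 0} \rho^{\alpha k} < \infty.
\]
This gives $\bbE|\bfX|^\alpha < \infty$. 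For the uniform $\ell_\alpha$-risk, I would expand
\[
\bfY - \bmtheta \cdot \bfX = (\bmPhi_1^* - \bmPhi_1)\bfZ_{t+p-1} + \cdots + (\bmPhi_p^* - \bmPhi_p)\bfZ_t + \bm{\varepsilon}_{t+p},
\]
and then use the bound $\opnorm{\bmtheta} \vee \opnorm{\bmtheta^*} \le C_{\rm op}\rho$ together with the moment bound on $\bfZ_t$ to conclude $\sup_{\bmtheta \in \bm{\Theta}} R_{\ell_\alpha}(\bmtheta) < \infty$ uniformly.

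With all four assumptions verified, I would simply apply Theorem \ref{cor:M_n = m_n = log n} with $\beta = \tfrac12|\log\rho|$, $\kappa = d$, $d_1 + d_2 = (p+1)d$, and $R = C_{\rm op}\rho$. The resulting excess risk bound
\[
C\left(\frac{\log n}{\beta n}\bigl(|\log\varepsilon| + (d_1+d_2)\kappa\log n\bigr)\right)^{(\alpha-1)/\alpha}
\]
becomes, after substituting and bounding $(p+1)d \cdot d \le 2p^2 d^2$,
\[
C\left(\frac{\log n}{|\log\rho|\, n}\bigl(|\log\varepsilon| + p^2 d^2 \log n\bigr)\right)^{(\alpha-1)/\alpha},
\]
with probability at least $1-2\varepsilon$, as claimed. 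The constant $C$ absorbs $C_{\rm op}$, $\rho$-independent prefactors from Assumption \ref{A5:noise}, and universal numerical constants; it does not depend on $n$, $\varepsilon$, $p$, $d$ or $\alpha$. The main obstacle is really only the $\alpha$-moment control on the stationary distribution for $\alpha \in (1,2)$, where Minkowski's inequality is not directly available; everything else is a bookkeeping reduction to Theorem \ref{cor:M_n = m_n = log n}.
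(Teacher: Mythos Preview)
Your proposal is correct and follows essentially the same route as the paper: both reduce Theorem~\ref{thm:VAR} to a direct application of Theorem~\ref{cor:M_n = m_n = log n} with $d_1 = pd$, $d_2 = d$, $\kappa = d$, $R = C_{\rm op}\rho$, and $\beta = \tfrac12|\log\rho|$ from Lemma~\ref{lem:mixing VARp}. The paper's proof is terser---it simply asserts the moment bounds $\bbE|\bfY|^\alpha < \infty$ and $\sup_{\bmtheta}R_{\ell_\alpha}(\bmtheta) < \infty$ in the discussion preceding the theorem and then records the covering-number estimate---whereas you supply a concrete justification via the moving-average expansion and a von Bahr--Esseen type inequality; this extra care is useful since Minkowski fails for $\alpha<2$, and it does not change the overall argument.
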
	
	
	\section{Simulations}
	\label{sec:simulation}
	
	\subsection{Numerical Studies} 
	
	To illustrate the effectiveness of the $\psi_{\alpha}$ truncation, we consider the following $5$-dimensional VAR$(p)$ models with $p=1$ and $p=2$ respectively:% data generating processes (DGP):
	\begin{align*}
		\text{ VAR}(1): \bfZ_t &= \diag\{ 0.6, -0.4, 0.1, 0.5, -0.2 \} \bfZ_{t-1} + \bmepsilon_t - \bbE[ \bmepsilon_t]  , \quad \bfZ_t \in \bbR^5 ,  \\
		\text{ VAR}(2): \bfZ_t &= \diag\{ 0.6, -0.6, 0.1, 0.5, -0.2 \}  \bfZ_{t-1}  \\
		&\pheq + \diag\{ -0.3, 0.5, -0.2, -0.3, 0.1 \} \bfZ_{t-2} + \bmepsilon_t  - \bbE [\bmepsilon_t]  , \quad \bfZ_t \in \bbR^5 
	\end{align*}
	where $t \in \mathbb{N}$, and $\{ \bmepsilon_t\}_{t \ge 1}$ is a sequence of  independent random vectors with i.i.d. components following from some heavy-tailed distribution: Pareto distribution with shape $\mu$ (\text{Pareto}($\mu$)) and density function 
	\[
	p_{\mu}(x) = \frac{\mu}{ x^{1+\mu} } \mathbbm{1}_{(1,\infty)}(x), \quad \mu \in (1, 2],
	\] 
	or Fr\'echet distribution with shape $\nu$ (Fr\'echet($\nu$)) and density function
	\[
	p_{\nu}(x) = \frac{\nu}{x^{1+\nu}} \exp\{ -x^{-\nu} \} \mathbbm{1}_{(0,\infty)}(x), \quad \nu \in (1, 2].
	\] 
	It is obvious that VAR(1) and VAR(2) are VAR(1) and VAR (2) models respectively. 
	
	To fit the models in the framework in Section \ref{sec:sec2}, we take 
	\begin{align}
		\bfY_t &= \bfZ_t, \quad \bfX_t = \bfZ_{t-1} \quad {\rm \ in \ VAR(1)}, \label{e:VAR1}\\
		\bfY_t &= \bfZ_t, \quad \bfX_t = (\bfZ_{t-1}, \bfZ_{t-2}) \quad {\rm \  in \ VAR(2)}.  \label{e:VAR2}
	\end{align}
	Besides, we use the following truncation function $\psi_{\alpha}(r)$
	\[
	\psi_{\alpha}(r) = \log \left(1 + r + \frac{ r^{\alpha} }{ \alpha } \right)\ , \quad r  \ge  0,
	\]
	where parameter $\alpha$ satisfies $1 < \alpha < \mu$ for the Pareto($\mu$) distribution or $1 < \alpha < \nu$ for the Fr\'echet($\nu$) distribution, respectively.
	
	Recall that the minimization problem \eqref{e:C-min} with the truncation function $\psi_{\alpha}$ is
	\begin{equation*}
		%\label{eq:logistic loss catoni}
		\hat{\bmtheta}(\lambda, \gamma) := \argmin_{\bmtheta \in \bm{\Theta}}\left\{ \frac{1}{n \lambda} \sum_{i = 1}^n \psi_\alpha( \lambda \abs{ \bfY_i - \bmtheta \cdot \bfX_i } ) + \gamma \norm{ \bmtheta}_{1,1} \right\} .
	\end{equation*}
	where $\lambda$ and $\gamma$ will be selected later. In practice, we can apply stochastic gradient descent (SGD) \cite{MR3948080,MR4065161,gower2019sgd}, to solve this optimization problem. That is,
	\begin{equation}
		\label{eq:SGD catoni}
		\hat{\bmtheta}_{k+1} = \hat{\bmtheta}_k - \eta_k \nabla_{\bmtheta} \left\{ \frac{1}{\lambda} \psi_\alpha\Big(\lambda \big|{ \bfY_k - \hat{\bmtheta}_k \cdot \bfX_k } \big| \Big) + \gamma \Vert \hat{\bmtheta}_k \Vert_{1,1}  \right\}, \quad k = 0, 1, 2, \dotsc 
	\end{equation}
	In the $k$-th step, we generate $(\bfY_k, \bfX_k)$ by VAR$(p)$ mentioned above, and let the learning rate be $\eta_k \in (0,1)$. Analogously, for the minimization problem \eqref{e:EmpMin} without truncation, 
	\begin{equation*}
		%\label{eq:logistic loss emp}
		\bar{\bmtheta}(\gamma) := \argmin_{\bmtheta \in \bm{\Theta}} \left\{ \frac1n \sum_{i = 1}^n \abs{ \bfY_i - \bmtheta \cdot \bfX_i } + \gamma \norm{ \bmtheta}_{1,1} \right\},
	\end{equation*}
	it can be solved by the following SGD in practice,
	\begin{equation}
		\label{eq:SGD emp}
		\bar{\bmtheta}_{k+1} = \bar{\bmtheta}_k - \eta_k \nabla_{\bmtheta} \left\{ \abs{ \bfY_k- \bar{\bmtheta}_k \cdot \bfX_k } + \gamma \Vert \bar{\bmtheta}_k \Vert_{1,1} \right\}, \quad k = 0 ,1, 2, \dotsc
	\end{equation}
	Furthermore, we also consider the following minimization problem with Huber loss: 
	\begin{equation*}
		\tilde{\bmtheta}(\sigma, \tau, \gamma) := \argmin_{\bmtheta\in \bm{\Theta}} \left\{ \frac{1}{n\sigma} \sum_{i = 1}^n h_{\tau} ( \sigma \abs{ \bfY_i - \bmtheta \cdot \bfX_i } ) + \gamma \norm{\bmtheta}_{1,1}  \right\},
	\end{equation*}
	which can be solved by the following SGD similarly:
	\begin{equation}
		\label{eq:SGD huber}
		\tilde{\bmtheta}_{k+1} = \tilde{\bmtheta}_k - \eta_k \nabla_{\bmtheta} \left\{ \frac{1}{\sigma} h_{\tau} \left(\sigma \big| \bfY_k - \tilde{\bmtheta}_k \cdot \bfX_k \big|  \right) + \gamma \Vert \tilde{\bmtheta}_k \Vert_{1,1} \right\}, \quad k = 0, 1, 2, \dotsc
	\end{equation}
	Here, the Huber function $h_{\tau}(r): \bbR \to \bbR^+$ is defined by
	\[
	h_{\tau}(r) = \left\{
	\begin{aligned}
		r^2 / 2 , \qquad & \quad \abs{r}  \le  \tau, \\
		\tau \abs{r} - \tau^2/2 , & \quad \abs{r} > \tau ,
	\end{aligned}
	\right.
	\]
	and the parameters $\sigma$, $\gamma$ and $\tau$ will be chosen later.
	
	Given a $\bmtheta$ and the observed data $\{(\bfX_i, \bfY_i) \}_{1 \le  i  \le  N}$, we will compute the following empirical risk 
	\[
	R_{\ell_1}^e(\bmtheta) = \frac1N \sum_{i = 1}^N \abs{ \bfY_i - {\bmtheta} \cdot \bfX_i }
	\]
	
	\vskip 3mm 
	
	\subsubsection{General description} Given observed data $\{\bfZ_i\}_{1 \le  i  \le  N+L}$ generating by VAR(1) or VAR(2), we first obtain the estimators $\hat{\bmtheta}$ (with $\psi_{\alpha}$ truncation), $\bar{\bmtheta}$ (without truncation) and $\tilde{\bmtheta}$ (with Huber loss) for the true value $\bmtheta^*$ by using the data set  $\{\bfZ_i\}_{1 \le  i  \le  N}$ to train the three corresponding models. In practice, we shall run the above three SGDs to approximate the these three estimators respectively. For each $\hat{\bmtheta}_k$, $\bar{\bmtheta}_k$ and $\tilde{\bmtheta}_k$ arising from the $k$-th step of SGDs \eqref{eq:SGD catoni}, \eqref{eq:SGD emp} and \eqref{eq:SGD huber} respectively, we let
	\begin{equation} \label{e:HatREst}
		\hat R_{\psi_\alpha,k} = R_{\ell_1}^e(\hat{\bmtheta}_k), \quad \hat R_{{\rm LAD},k} = R_{\ell_1}^e(\bar{\bmtheta}_k) \quad \text{and} \quad \hat R_{{\rm Huber},k} = R_{\ell_1}^e(\tilde{\bmtheta}_k).
	\end{equation}
	
	For the obtained estimator $\hat \bmtheta$, we next use the remain $\{\bfZ_i\}_{N+1 \le  i  \le  N+L}$ to test it. More precisely, we use the model and the obtained estimators $\hat \bmtheta$ to predict $L$ values, denoted as $\{\hat{\bfZ}_i^{pre}\}_{N+1 \le  i  \le  N+L}$. For VAR(1), we let 
	\[
	\hat \bfZ_{N+i}^{pre} = \hat \bmtheta \cdot \hat \bfZ_{N+i-1}^{pre}, \quad \hat \bfX_{N+1}^{pre} = \bfZ_{N}, \quad i=1,\dotsc, L.
	\]
	Analogously, for VAR(2), we let
	\[
	\hat \bfZ_{N+i}^{pre} = \hat \bmtheta \cdot  ( \hat \bfZ_{N+i-1}^{pre},\hat\bfZ_{N+i-2}^{pre} ), \quad ( \hat \bfZ_{N}^{pre},\hat\bfZ_{N-1}^{pre} )=(\bfZ_{N}, \bfZ_{N-1}), \quad i=1,\dotsc, L.
	\]
	We then compute the following prediction error: 
	\[
	\hat E^p(\hat \bmtheta, L) = \frac1L \sum_{i = 1}^L \abs{ \bfZ_{i+N} - \hat \bfZ_{i+N}^{pre} }.
	\]
	The smaller $\hat E^p(\hat \bmtheta, L)$ is, the better the estimator $\hat \bmtheta$ performs. Similarly, we define $\bar{E}^p(\bar{\bmtheta}, L)$ and $\tilde{E}^p(\tilde{\bmtheta}, L)$. 
	For $\hat{\bmtheta}_N$, $\bar{\bmtheta}_N$ and $\tilde{\bmtheta}_N$ arising from SGDs \eqref{eq:SGD catoni}, \eqref{eq:SGD emp} and \eqref{eq:SGD huber} after stopping at the $N$-th iteration respectively, to distinguish among their prediction errors, we let  
	\begin{equation} \label{e:PreError}
		\hat{E}^p_{\psi_{\alpha},L} = \hat E^p(\hat{\bmtheta}_N, L), \quad
		\bar{E}^p_{ {\rm LAD},L} = \bar E^p(\bar{\bmtheta}_N, L), \quad \text{and} \quad
		\tilde{E}^p_{ {\rm Huber},L} = \tilde E^p(\tilde{\bmtheta}_N, L).
	\end{equation} 
	\vskip 3mm
	We will conduct 2000 independent experiments for each of the three models. For each experiment, we run the VAR$(p)$ to create training data for SGDs \eqref{eq:SGD catoni}, \eqref{eq:SGD emp} and \eqref{eq:SGD huber}. To guarantee that the data are approximately stationary, we drop the first $5000$ observations and use the subsequent data $\mathbf{Z}_{5001}, \mathbf{Z}_{5002}, \ldots$. In the $i$-th experiment, $i=1,\dotsc,2000$, let the number of iteration steps of SGDs be $N$. We then obtain $\{\hat{\bmtheta}_k^i, \bar{\bmtheta}_k^i, \tilde{\bmtheta}_k^i : k = 0, \dotsc, N, i=1,\dotsc,2000\}$,  which are used to compute the empirical risk $\hat{R}_{\psi_\alpha,k}$, $\hat{R}_{{\rm LAD},k}$ and $\hat{R}_{{\rm Huber},k}$ as defined in \eqref{e:HatREst}. In \autoref{fig-VAR1p}--\autoref{fig-VAR2f}, the linear plots show the means of these empirical risks. Additionally, we use $\{\hat{\bmtheta}_N^i, \bar{\bmtheta}_N^i, \tilde{\bmtheta}_N^i : i = 1, \dotsc, 2000\}$ to compute the prediction errors $\hat{E}^p_{\psi_{\alpha},L}$, $\hat{E}^p_{{\rm LAD},L}$, and $\hat{E}^p_{{\rm Huber},L}$ as defined in \eqref{e:PreError}. Since the prediction errors may be large, we take the logarithm base $10$ of these values. The means, medians and outliers of these logged prediction errors are then illustrated using the box plots in  \autoref{fig-VAR1p}--\autoref{fig-VAR2f}. 
	
	Furthermore, to demonstrate the advantages of the truncation function $\psi_{\alpha}$, we will select different values of $\alpha$ according to the index parameters $\mu$ and $\nu$ of the Pareto and Fr\'echet noise distributions respectively. The following scenarios will be considered:
	\begin{itemize}
		\item[$(\runum{1})$] For $\mu = \nu = 1.2$, we choose $\alpha = 1.05$, $1.1$, $1.15$ and $1.18$ within  $\psi_\alpha $. 
		\item[$(\runum{2})$] For $\mu = \nu = 1.5$, we choose $\alpha = 1.1$, $1.2$, $1.3$ and $1.4$ within  $\psi_\alpha $.
		\item[$(\runum{3})$] For $\mu = \nu = 1.8$, we choose $\alpha = 1.2$, $1.4$, $1.6$ and $1.7$ within  $\psi_\alpha $.
	\end{itemize}
	
	\subsubsection{Simulations of VAR(1): }
	
	For all SGDs \eqref{eq:SGD catoni}, \eqref{eq:SGD emp} and \eqref{eq:SGD huber}, we set the initial points $\hat{\bmtheta}_0 = \bar{\bmtheta}_0 = \tilde{\bmtheta}_0 = 2\mathbf{I}_5 $ where $\bfI_5$ is the identity matrix in $\bbR^{5\times 5}$, employ a constant learning rate $\eta_k = 0.01$ for $0 \le  k  \le  N = 800$ and set $\gamma = 0.01$. Additionally, we set $\lambda = 0.035 $ for the truncated SGD \eqref{eq:SGD catoni}, and set $\sigma = 1$ and $\tau = 0.5$ in SGD \eqref{eq:SGD huber}. We predict $L=10$ values in each experiment. The outcomes are illustrated in \autoref{fig-VAR1p} for  Pareto($\mu$) noise and in \autoref{fig-VAR1f} for Fr\'echet($\nu$) noise, respectively. 
	
	\begin{figure*}[htpb]
		\centering
		\subfigure[ $\varepsilon_k \sim \text{Pareto}(1.2)$, $\alpha = 1.05$, $1.1$, $1.15$ and $1.18$ within function $\psi_\alpha $. \label{figure 12p}]{
			\begin{minipage}[t]{0.8\linewidth}
				\centering
				\includegraphics[width=\linewidth]{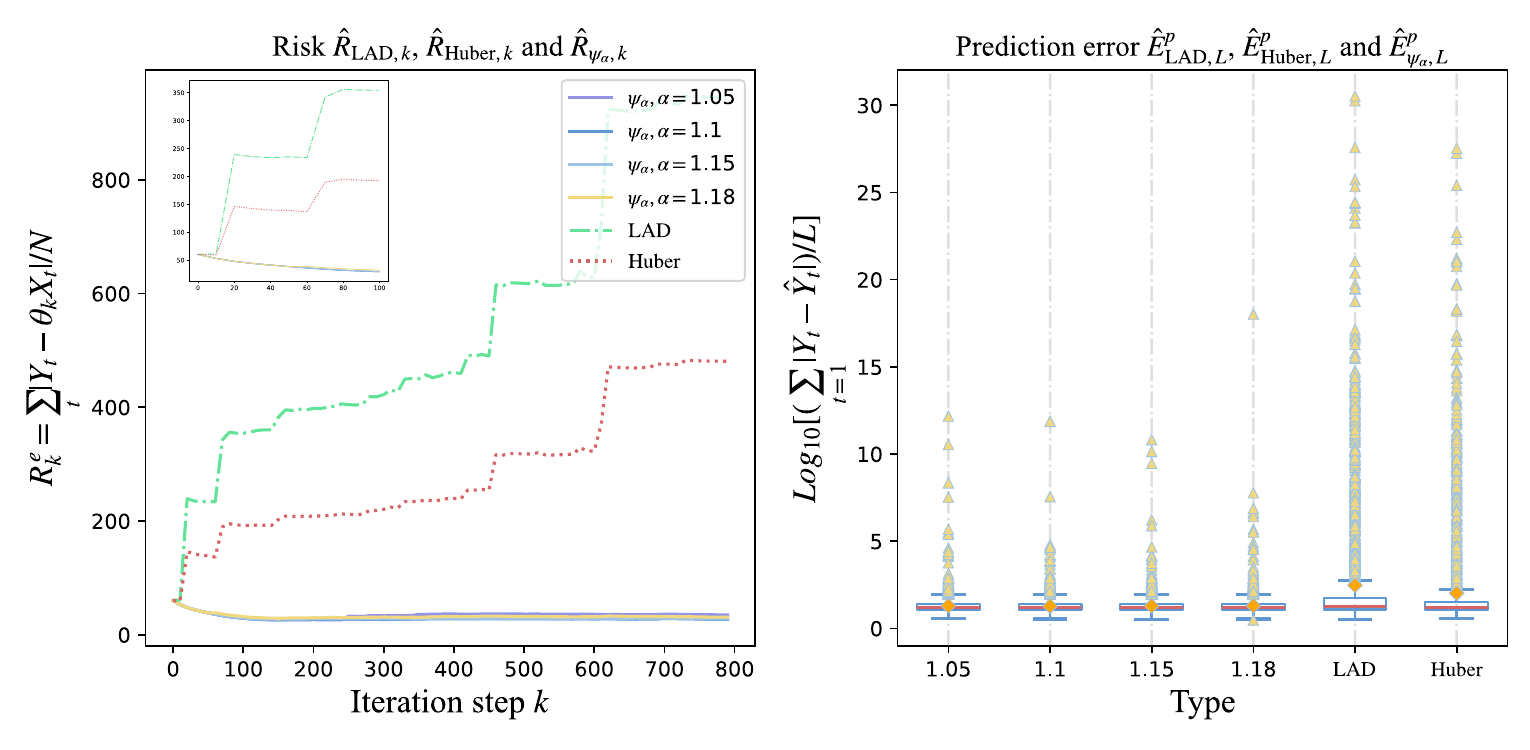 }
			\end{minipage}
		}%
		
		\subfigure[$\varepsilon_k \sim \text{Pareto}(1.5)$, $\alpha = 1.1$, $1.2$, $1.3$ and $1.4$ within  function  $\psi_\alpha $. \label{figure 15p} ]{
			\begin{minipage}[t]{0.8\linewidth}
				\centering
				\includegraphics[width=\linewidth]{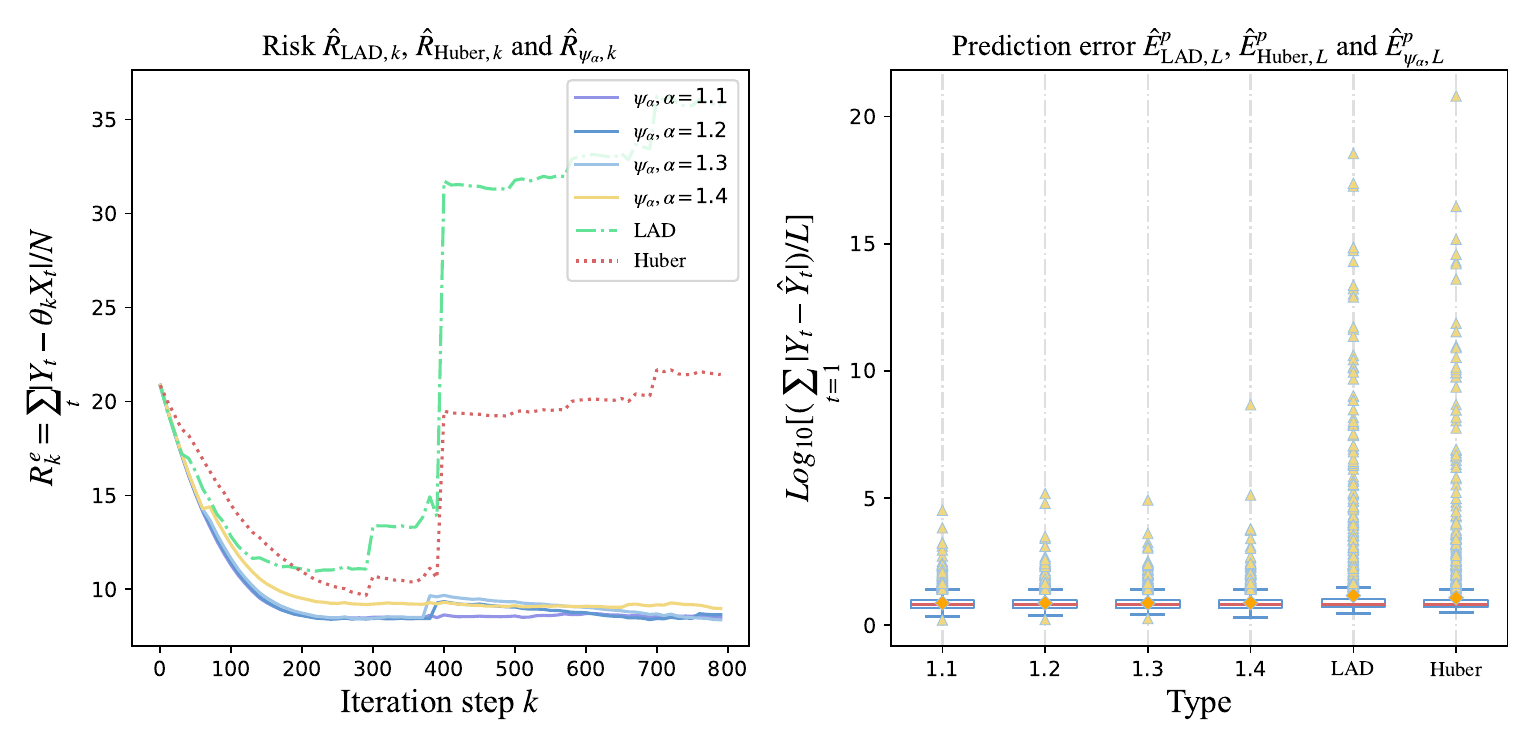}
			\end{minipage}
		}%
		
		\subfigure[ $\varepsilon_k \sim \text{Pareto}(1.8)$, $\alpha = 1.2$, $1.4$, $1.6$ and $1.7$ within  function  $\psi_\alpha $. \label{figure 18p} ]{
			\begin{minipage}[t]{0.8\linewidth}
				\centering
				\includegraphics[width=\linewidth]{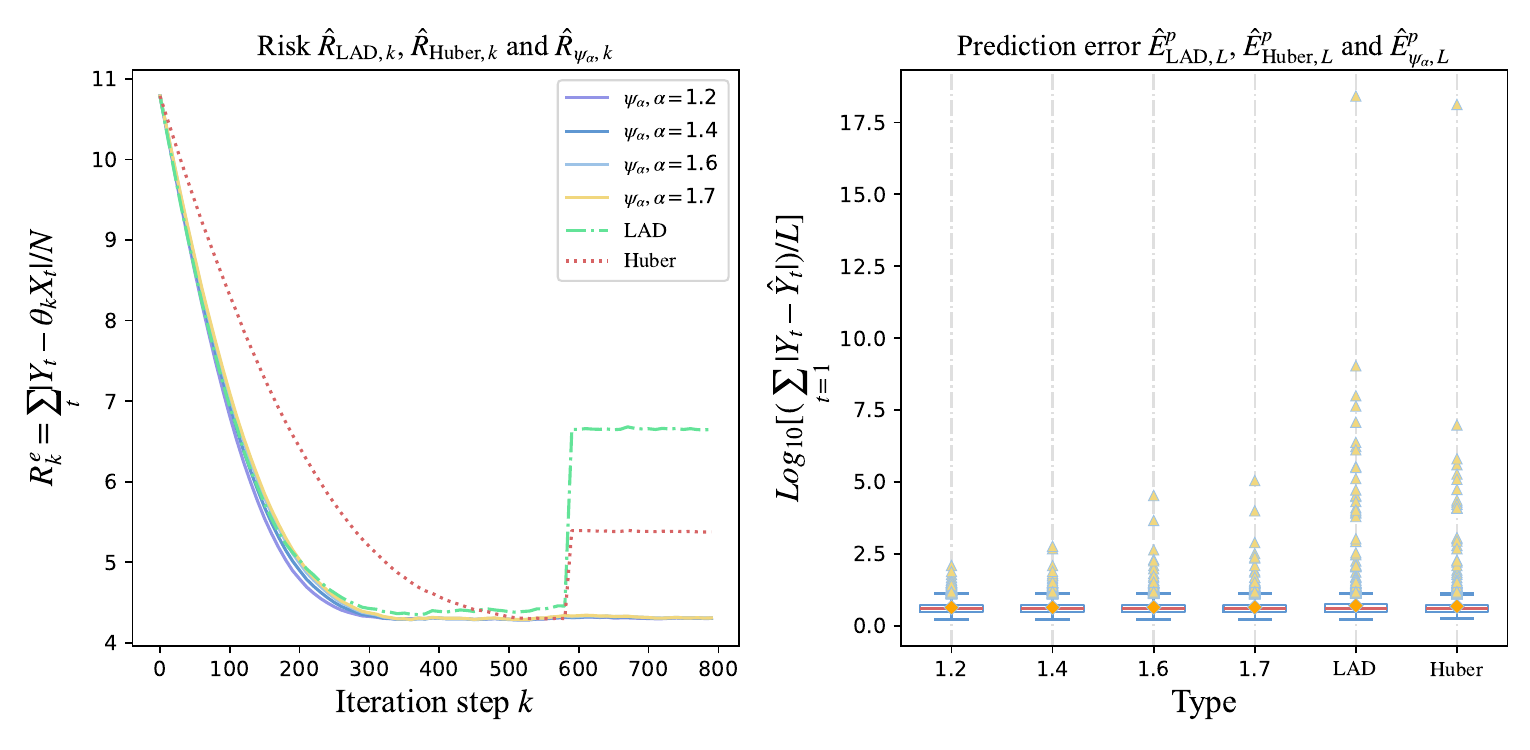}
			\end{minipage}
		}%
		
		\centering
		\caption{  Simulations values of risk $\hat{R}_{\psi_{\alpha},k}$, $\hat{R}_{ {\rm LAD},k}$ and $\hat{R}_{ {\rm Huber},k}$ for VAR(1)  with  $1 \le  k  \le  N = 800$, and the logged prediction errors $\hat{E}^p_{\psi_{\alpha},L}$, $\hat{E}^p_{{\rm LAD},L}$ and $\hat{E}^p_{ {\rm Huber},L}$ with $L=10$. The noise $\varepsilon_k $ follows a Pareto distribution $\text{Pareto}(\mu)$ with $\mu = 1.2$, $1.5$ and $1.8$. }
		\label{fig-VAR1p}
	\end{figure*}
	
	\begin{figure*}[htpb]
		\centering
		\subfigure[$\varepsilon_k \sim \text{Fr\'echet}(1.2)$, $\alpha = 1.05$, $1.1$, $1.15$ and $1.18$ within function $\psi_\alpha $. \label{figure 12invw} ]{
			\begin{minipage}[t]{0.8\linewidth}
				\centering
				\includegraphics[width=\linewidth]{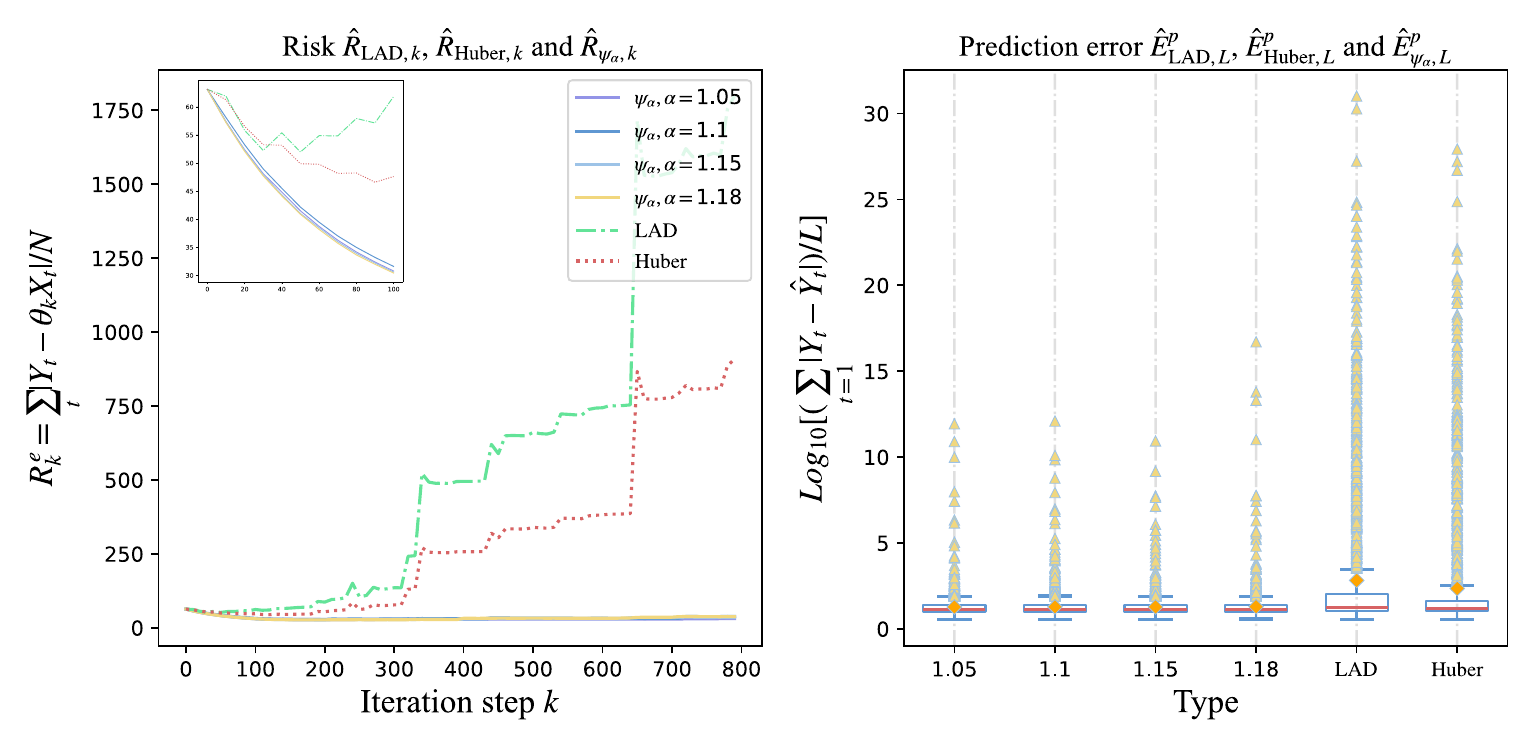}
			\end{minipage}
		}%
		
		\subfigure[$\varepsilon_k \sim \text{Fr\'echet}(1.5)$, $\alpha = 1.1$, $1.2$, $1.3$ and $1.4$ within  function $\psi_\alpha $. \label{figure 15invw} ]{
			\begin{minipage}[t]{0.8\linewidth}
				\centering
				\includegraphics[width=\linewidth]{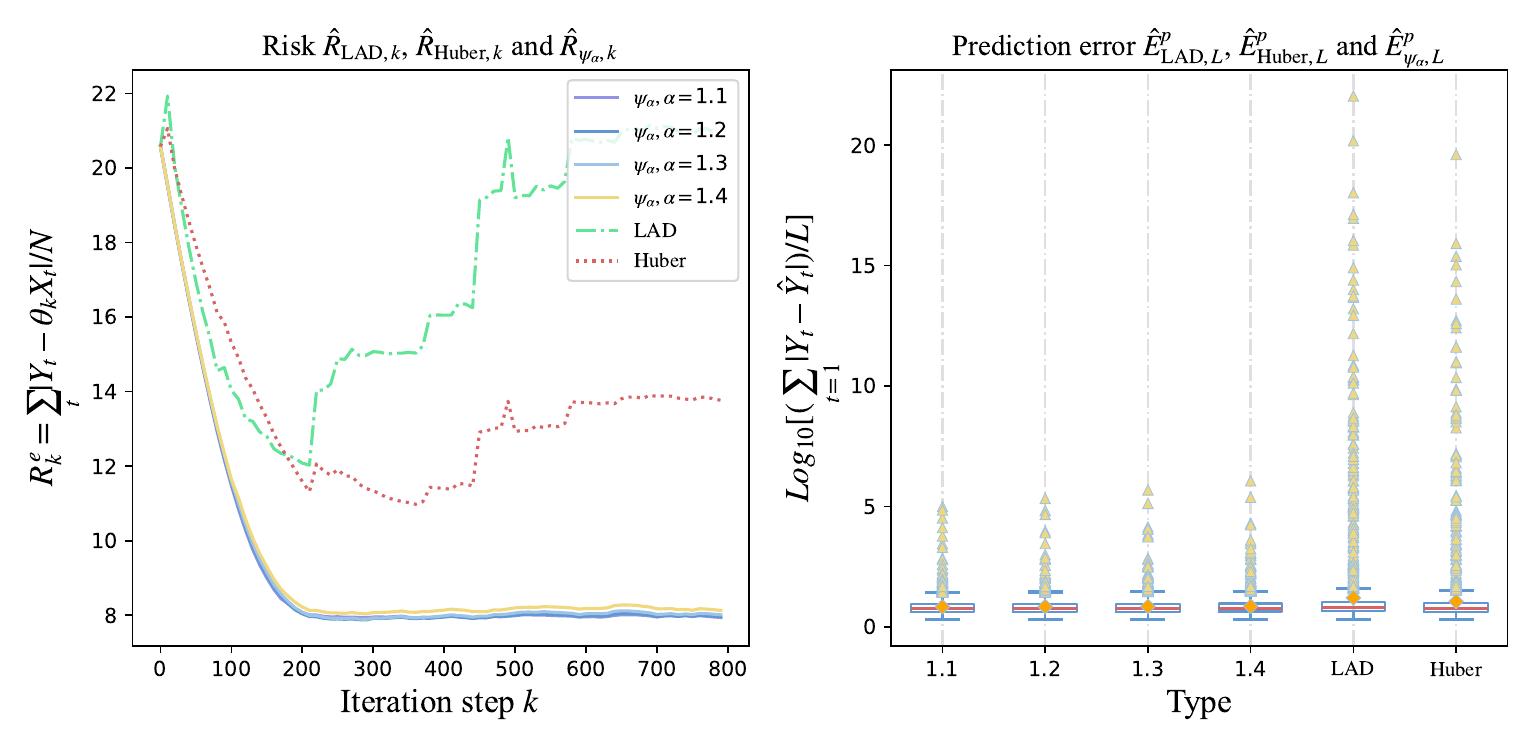}
			\end{minipage}
		}%
		
		\subfigure[$\varepsilon_k \sim \text{Fr\'echet}(1.8)$, $\alpha = 1.2$, $1.4$, $1.6$ and $1.7$ within function $\psi_\alpha $. \label{figure 18invw}  ]{
			\begin{minipage}[t]{0.8\linewidth}
				\centering
				\includegraphics[width=\linewidth]{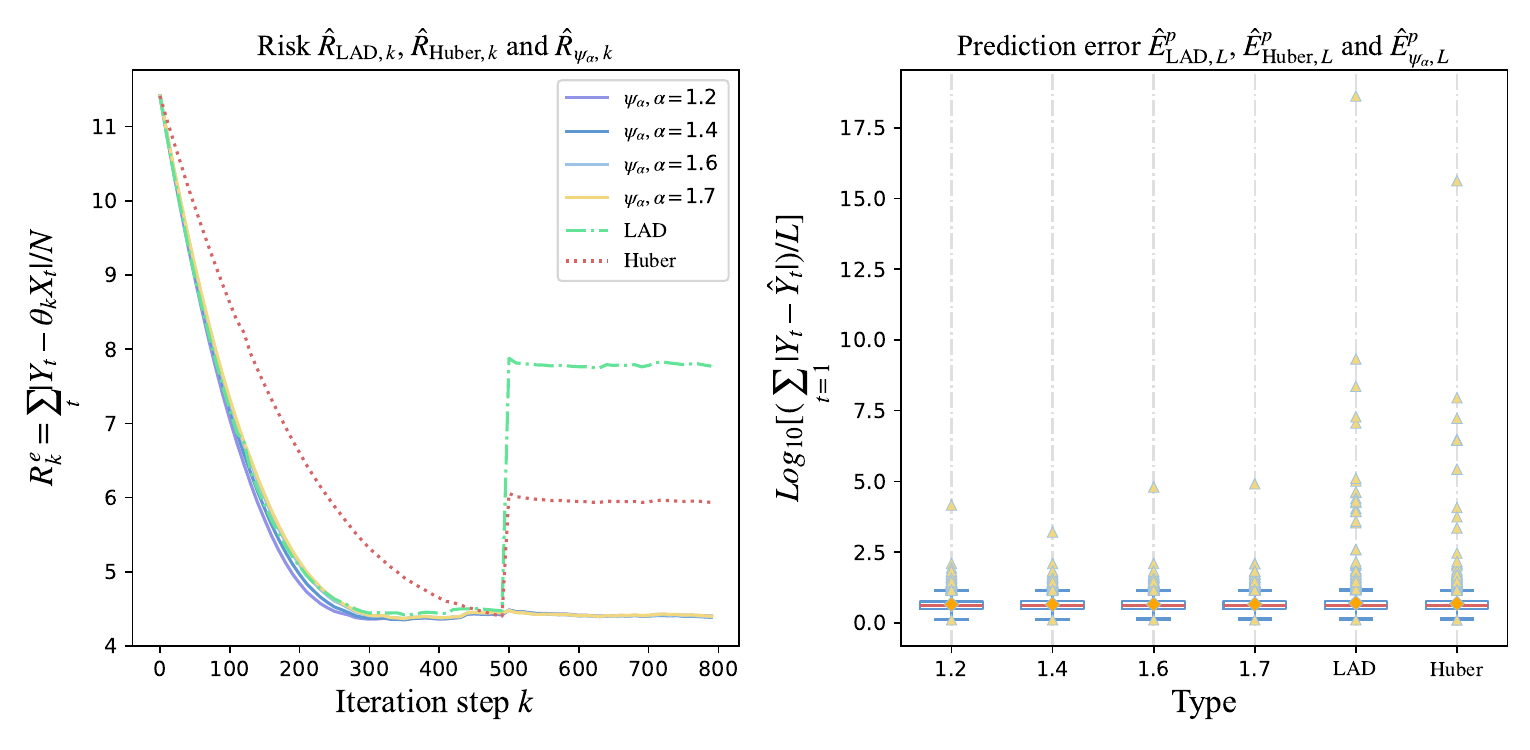}
			\end{minipage}
		}%
		
		\centering
		\caption{   Simulations values of risk $\hat{R}_{\psi_{\alpha},k}$, $\hat{R}_{ {\rm LAD},k}$ and $\hat{R}_{ {\rm Huber},k}$ for VAR(1)  with  $1 \le  k  \le  N = 800$, and the logged prediction errors $\hat{E}^p_{\psi_{\alpha},L}$, $\hat{E}^p_{{\rm LAD},L}$ and $\hat{E}^p_{ {\rm Huber},L}$ with $L=10$. The noise $\varepsilon_k $ follows a Fr\'echet distribution $\text{Fr\'echet}(\nu)$ with $\nu=1.2$, $1.5$ and $1.8$. }
		\label{fig-VAR1f}
	\end{figure*}
	
	\subsubsection{Simulations of VAR(2): }
	
	For all SGDs \eqref{eq:SGD catoni}, \eqref{eq:SGD emp} and \eqref{eq:SGD huber}, the initial points are set to $\hat{\bmtheta}_0 = \bar{\bmtheta}_0 = \tilde{\bmtheta}_0 =  (\bfI_5, \bfI_5) \in \bbR^{5 \times 10}$, a constant learning rate of $\eta_k \equiv 0.01$ is used, the number of iterations is $N=800$, and the penalty coefficient is $\gamma = 0.005$. Additionally, we set $\lambda = 0.035$ in SGD \eqref{eq:SGD catoni}, let $\sigma = 1$ and $\tau = 0.5$ in SGD \eqref{eq:SGD huber}. We predict $L=10$ values in each experiment. Consequently, we obtain \autoref{fig-VAR2p} for Pareto($\mu$) noise and in \autoref{fig-VAR2f} for Fr\'echet($\nu$) noise, respectively.		
	
	\begin{figure*}[htpb]
		\centering
		\subfigure[ $\varepsilon_k \sim \text{Pareto}(1.2)$, $\alpha = 1.05$, $1.1$, $1.15$ and $1.18$ within function $\psi_\alpha $. \label{figure 2_12p}]{
			\begin{minipage}[t]{0.8\linewidth}
				\centering
				\includegraphics[width=\linewidth]{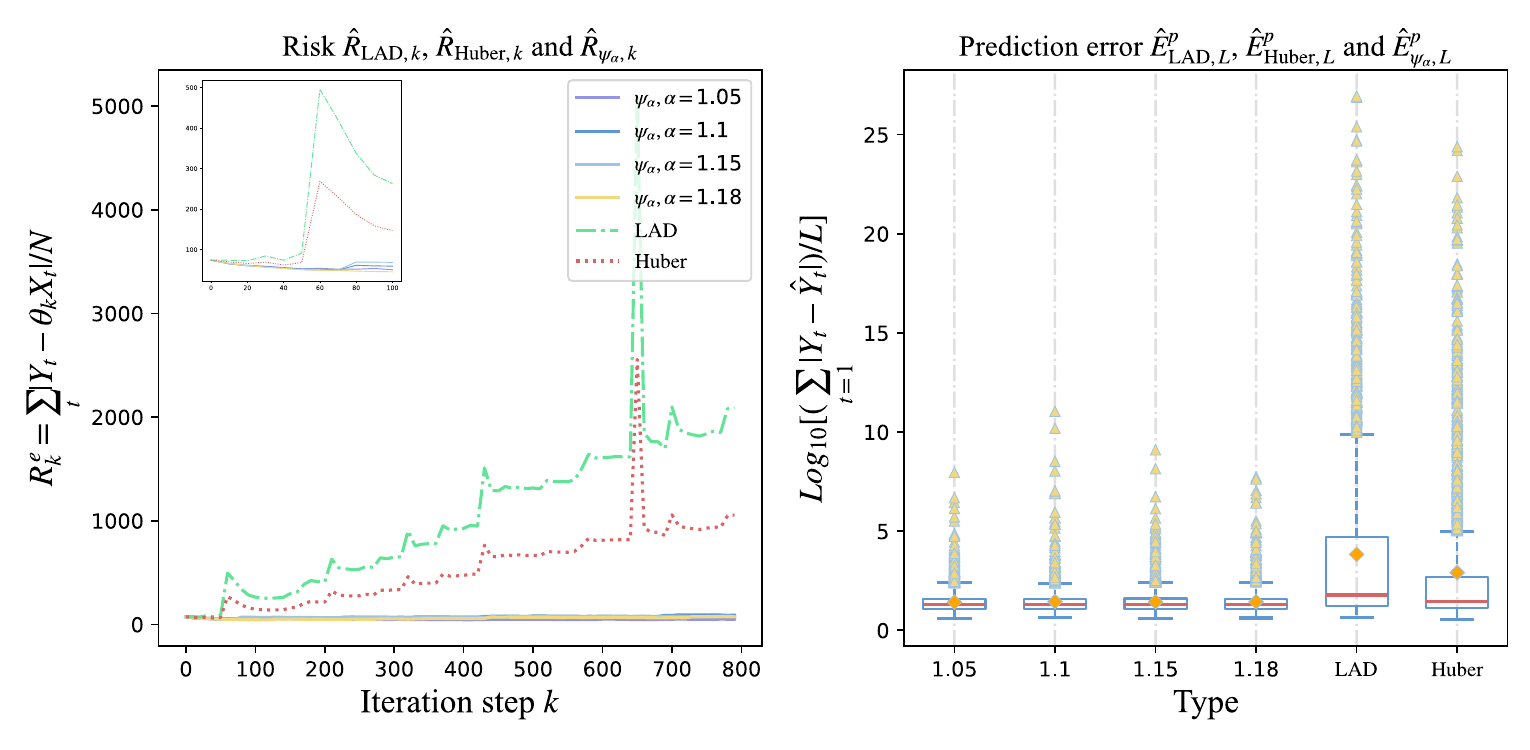}
			\end{minipage}
		}%
		
		\subfigure[$\varepsilon_k \sim \text{Pareto}(1.5)$, $\alpha = 1.1$, $1.2$, $1.3$ and $1.4$ within function $\psi_\alpha $. \label{figure 2_15p} ]{
			\begin{minipage}[t]{0.8\linewidth}
				\centering
				\includegraphics[width=\linewidth]{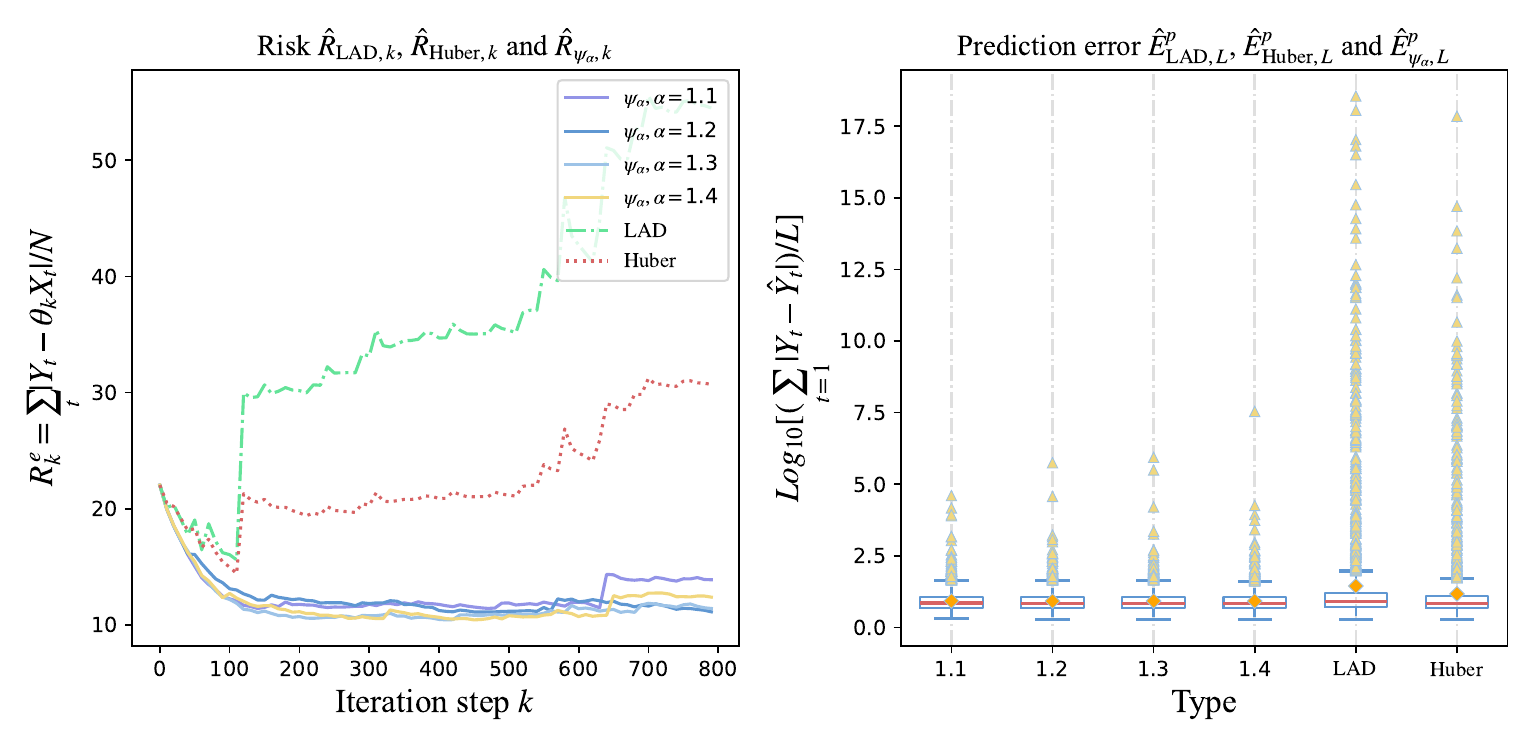}
			\end{minipage}
		}%
		
		\subfigure[ $\varepsilon_k \sim \text{Pareto}(1.8)$, $\alpha = 1.2$, $1.4$, $1.6$ and $1.7$ within function $\psi_\alpha $. \label{figure 2_18p} ]{
			\begin{minipage}[t]{0.8\linewidth}
				\centering
				\includegraphics[width=\linewidth]{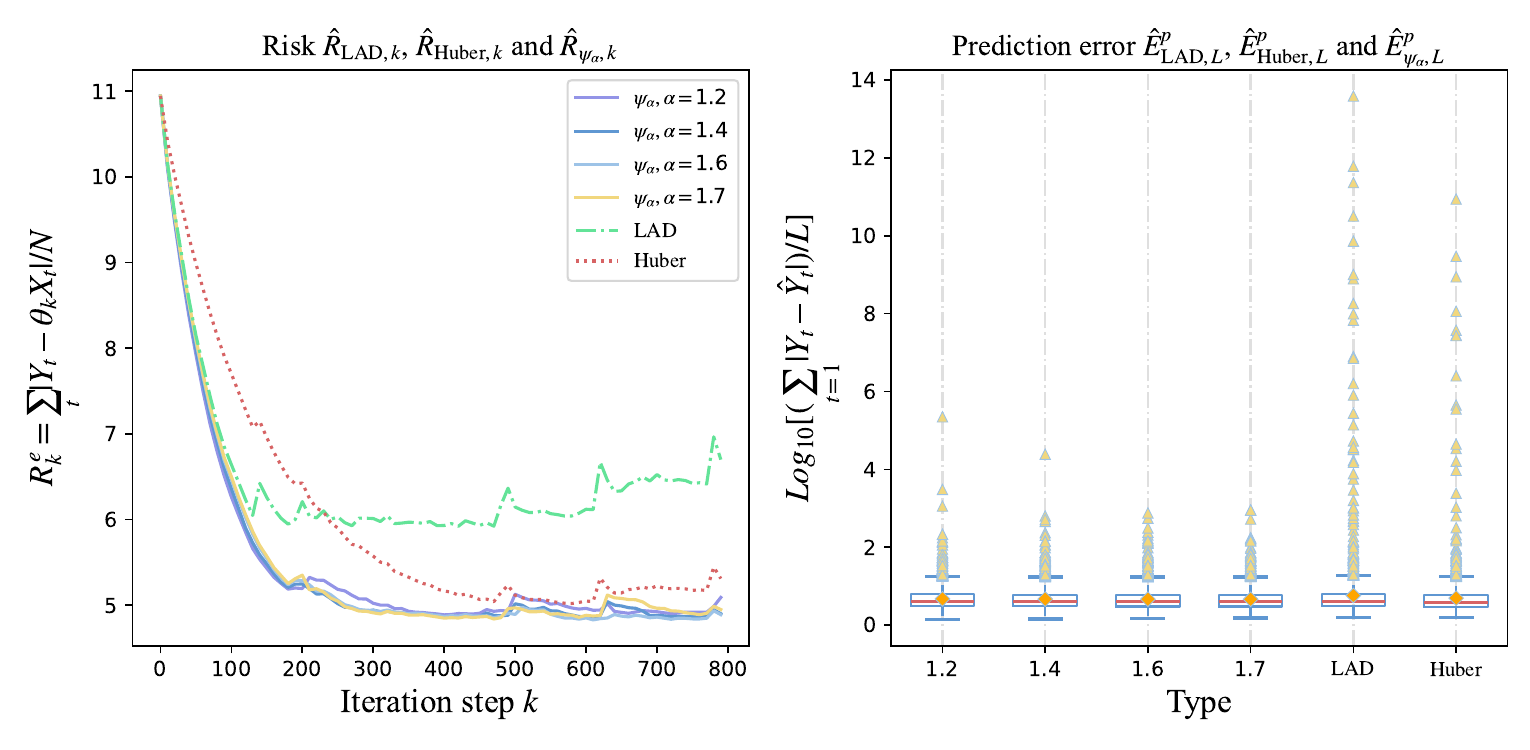}
			\end{minipage}
		}%
		
		\centering
		\caption{ Simulations values of risk $\hat{R}_{\psi_{\alpha},k}$, $\hat{R}_{ {\rm LAD},k}$ and $\hat{R}_{ {\rm Huber},k}$ for VAR(2)  with  $1 \le  k  \le  N = 800$, and the logged prediction errors $\hat{E}^p_{\psi_{\alpha},L}$, $\hat{E}^p_{{\rm LAD},L}$ and $\hat{E}^p_{ {\rm Huber},L}$ with $L=10$. The noise $\varepsilon_k $ follows a Pareto distribution $\text{Pareto}(\mu)$ with $\mu = 1.2$, $1.5$ and $1.8$. }
		\label{fig-VAR2p}
	\end{figure*}
	
	\begin{figure*}[htpb]
		\centering
		\subfigure[ $\varepsilon_k \sim \text{Fr\'echet}(1.2)$, $\alpha = 1.05$, $1.1$, $1.15$ and $1.18$ within function $\psi_\alpha $. \label{figure 12invw_2} ]{
			\begin{minipage}[t]{0.8\linewidth}
				\centering
				\includegraphics[width=\linewidth]{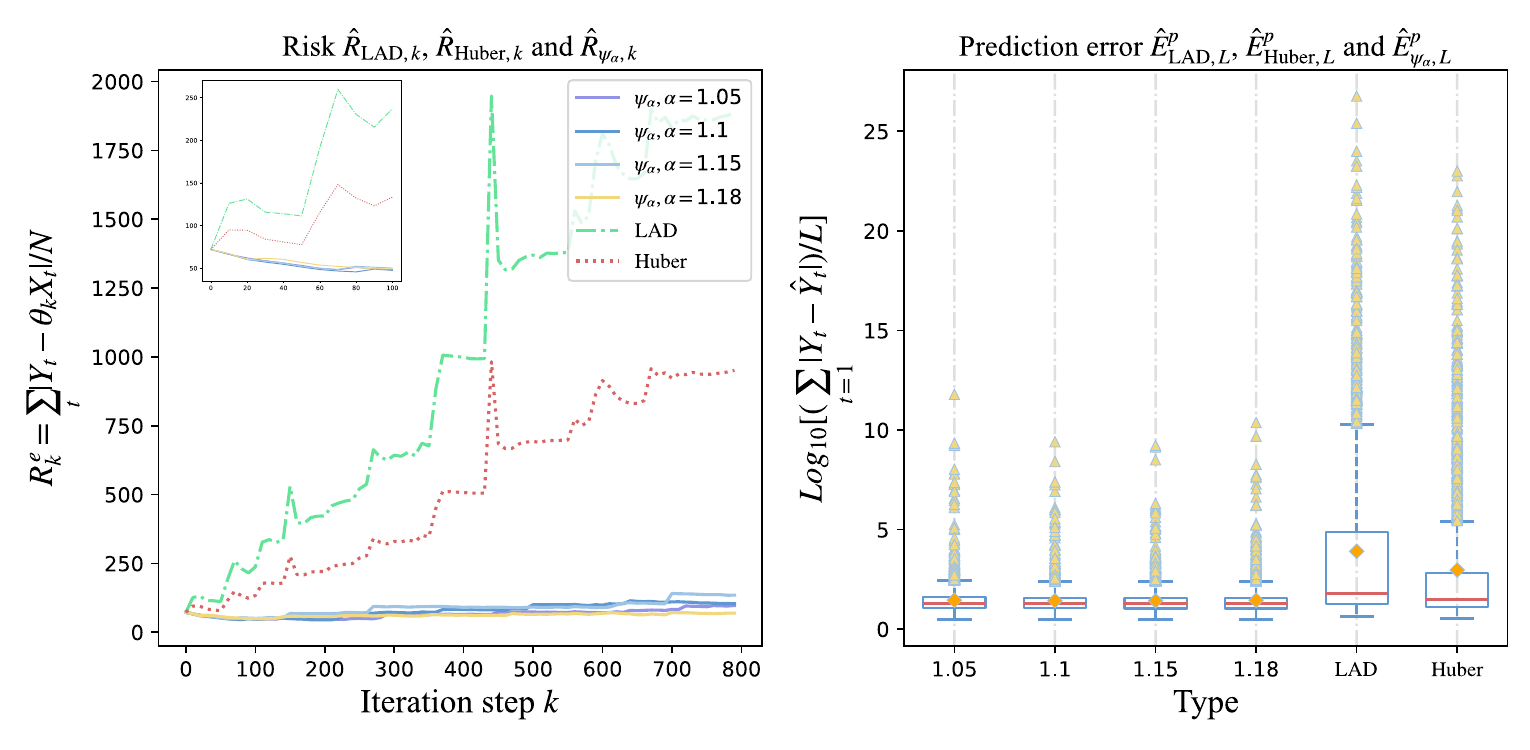}
			\end{minipage}
		}%
		
		\subfigure[ $\varepsilon_k \sim \text{Fr\'echet}(1.5)$, $\alpha = 1.1$, $1.2$, $1.3$ and $1.4$ within function $\psi_\alpha $. \label{figure 15invw_2} ]{
			\begin{minipage}[t]{0.8\linewidth}
				\centering
				\includegraphics[width=\linewidth]{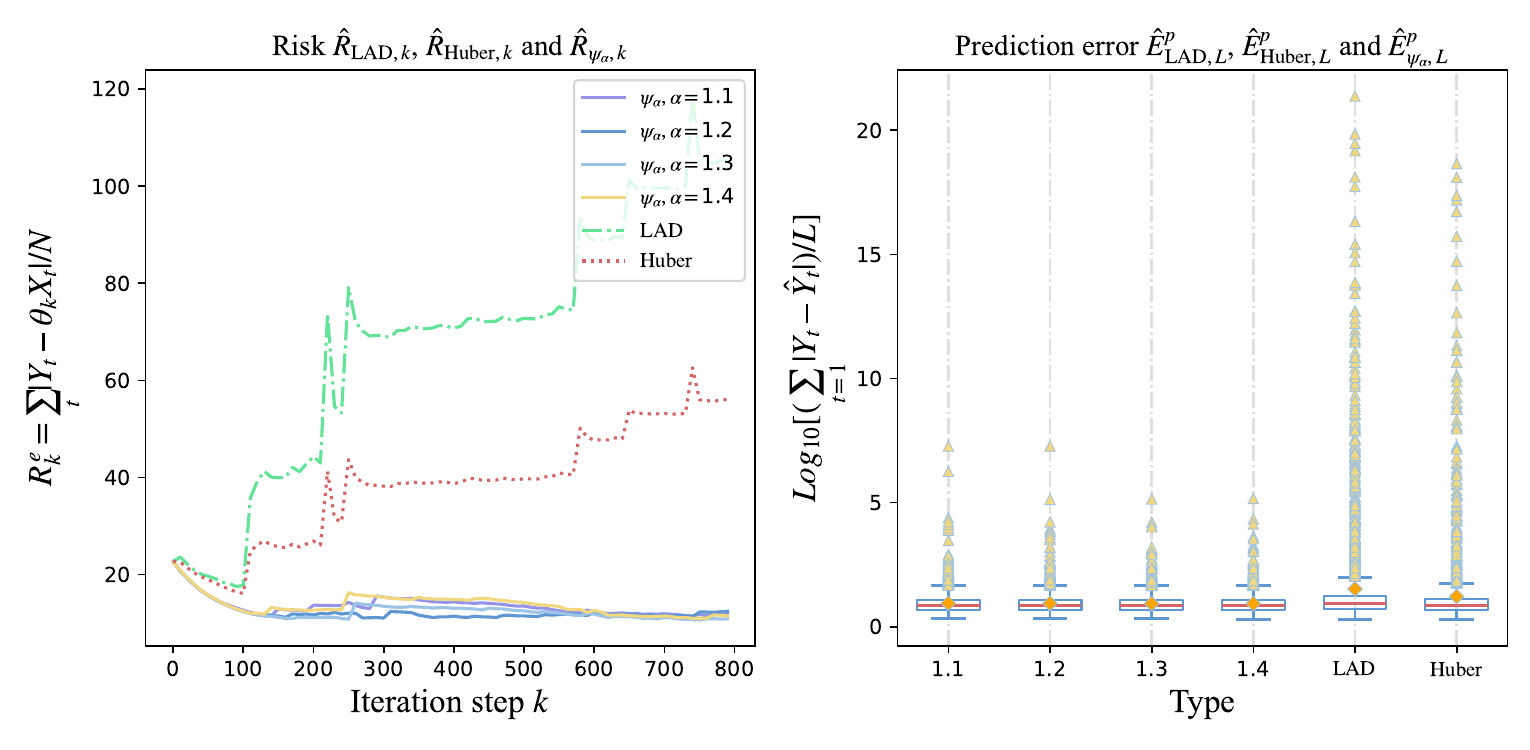}
			\end{minipage}
		}%
		
		\subfigure[ $\varepsilon_k \sim \text{Fr\'echet}(1.8)$, $\alpha = 1.2$, $1.4$, $1.6$ and $1.7$ within function $\psi_\alpha $. \label{figure 18invw_2} ]{
			\begin{minipage}[t]{0.8\linewidth}
				\centering
				\includegraphics[width=\linewidth]{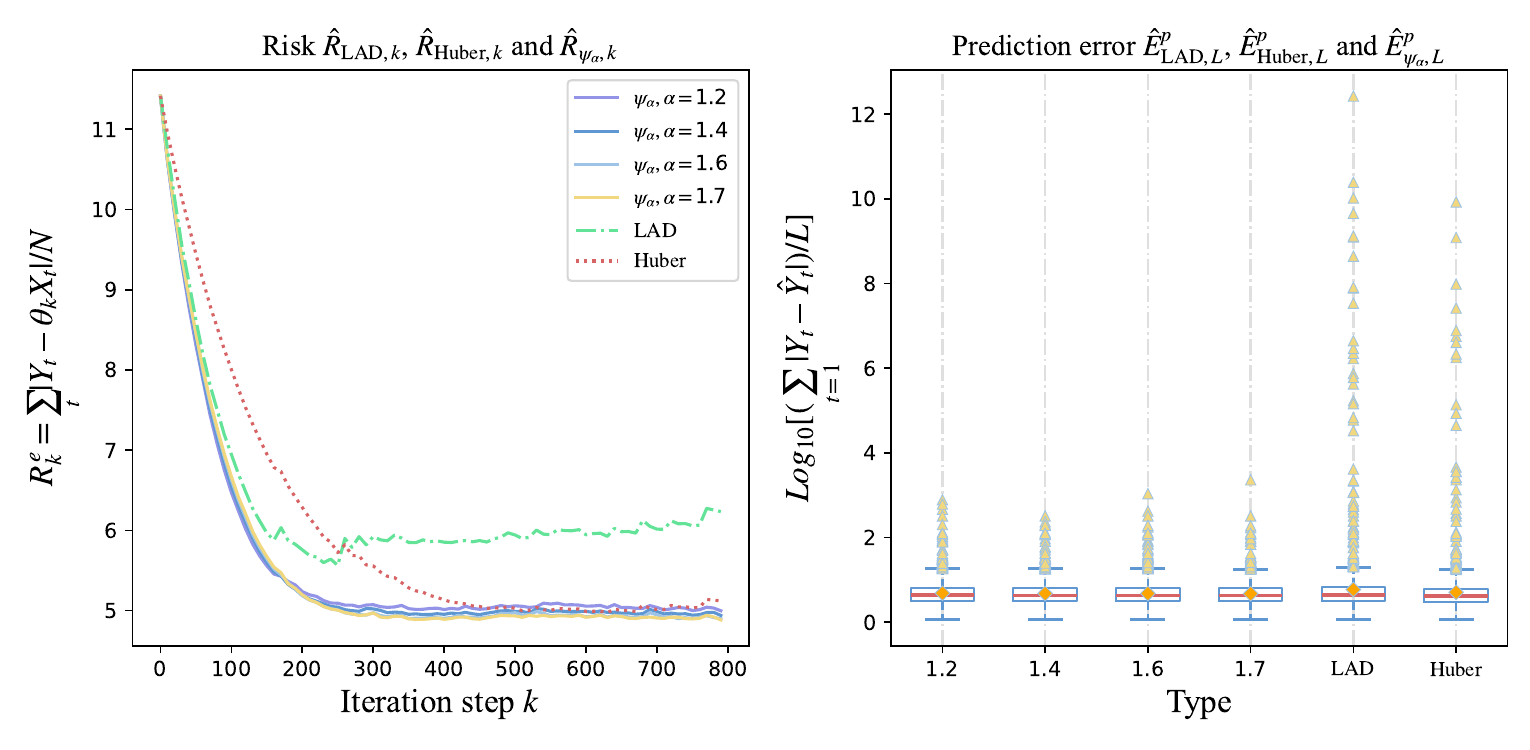}
			\end{minipage}
		}%
		
		\centering
		\caption{  Simulations values of risk $\hat{R}_{\psi_{\alpha},k}$, $\hat{R}_{ {\rm LAD},k}$ and $\hat{R}_{ {\rm Huber},k}$ for VAR(2)  with  $1 \le  k  \le  N = 800$, and the logged prediction errors $\hat{E}^p_{\psi_{\alpha},L}$, $\hat{E}^p_{{\rm LAD},L}$ and $\hat{E}^p_{ {\rm Huber},L}$ with $L=10$. The noise $\varepsilon_k $ follows a Fr\'echet distribution $\text{Fr\'echet}(\nu)$ with $\nu=1.2$, $1.5$ and $1.8$. }
		\label{fig-VAR2f}
	\end{figure*}

	\subsubsection{Conclusion:}
	
	Besides the figures, we also present \autoref{tab:pareto} for Pareto($\mu$) noise and \autoref{tab:invweibull} for Fr\'echet($\nu$) noise, which compare the differences with respect to the order $p$ more directly.
	% Table generated by Excel2LaTeX from sheet 'Sheet1'
	\begin{table}[htbp]
		\centering
		\caption{ Comparison of average empirical $\ell_1$ risk  and logged prediction error for LAD regression on VAR(1) and VAR(2) models with Pareto($\mu$) noise, where $\mu = 1.2$, $1.5$, and $1.8$. }
		\label{tab:pareto}%
		\renewcommand\arraystretch{1.13}
		\resizebox{1.0\linewidth}{!}{
			\begin{tabular}{c|c|c|cc|cc}
				\bottomrule
				&   \multicolumn{1}{c}{}    &       & \multicolumn{2}{c|}{VAR(1)} & \multicolumn{2}{c}{VAR(2)} \bigstrut\\
				\hline
				& \multicolumn{2}{c|}{\textbf{Pareto($\mu$)}} & \multirow{1}[4]{*}{Risk } & \multicolumn{1}{c|}{\multirow{1}[4]{*}{Prediction error }} & \multirow{1}[4]{*}{Risk } & \multicolumn{1}{c}{\multirow{1}[4]{*}{Prediction error}} \bigstrut\\
				\cline{2-3}          & $\mu$    & $\alpha$ &   (mean)    &     (logged mean )   &  (mean)     &  (logged mean ) \bigstrut\\
				\hline
				\hline
				\multirow{12}[24]{*}{ \rotatebox{90}{\textbf{ $\psi_\alpha$ Truncation}} } & \multirow{4}[8]{*}{1.20 } & 1.05  & 34.974  & 1.290  & 50.334  & 1.422  \bigstrut\\
				&       & 1.10  & 27.360  & 1.280  & 88.064  & 1.447  \bigstrut\\
				&       & 1.15  & 28.091  & 1.285  & 77.914  & 1.437 \bigstrut\\
				&       & 1.18  & 30.930  & 1.299  & 73.195  & 1.436 \bigstrut\\
				\cline{2-7}          & \multirow{4}[8]{*}{1.50 } & 1.10  & 8.498  & 0.877  & 13.896  & 0.928 \bigstrut\\
				&       & 1.20  & 8.653  & 0.879  & 11.136  & 0.923  \bigstrut\\
				&       & 1.30  & 8.373  & 0.877  & 11.423  & 0.924 \bigstrut\\
				&       & 1.40  & 8.982  & 0.886  & 12.400  & 0.923 \bigstrut\\
				\cline{2-7}          & \multirow{4}[8]{*}{1.80 } & 1.20  & 4.303  & 0.637  & 5.090  & 0.673  \bigstrut\\
				&       & 1.40  & 4.305  & 0.640  & 4.900  & 0.668  \bigstrut\\
				&       & 1.60  & 4.309  & 0.644  & 4.889  & 0.665   \bigstrut\\
				&       & 1.70  & 4.309  & 0.645  & 4.948  & 0.663  \bigstrut\\
				\hline
				\hline
				\multirow{3}[6]{*}{ \rotatebox{90}{\textbf{LAD }} } & 1.20  & \multirow{3}[6]{*}{*} & 908.465  & 2.472  & 2094.062  & 3.830   \bigstrut\\
				& 1.50  &       & 35.732  & 1.168  & 54.492  & 1.447  \bigstrut\\
				& 1.80  &       & 6.648  & 0.702  & 6.698  & 0.760  \bigstrut\\
				\hline
				\hline
				\multirow{3}[6]{*}{ \rotatebox{90}{\textbf{Huber}} } & 1.20  & \multirow{3}[6]{*}{*} & 480.862  & 2.015   & 1055.114  & 2.912 \bigstrut\\
				& 1.50  &       & 21.425  & 1.083  & 30.705  & 1.173  \bigstrut\\
				& 1.80  &       & 5.378  & 0.678  & 5.310  & 0.693  \bigstrut\\
				\toprule
		\end{tabular} }%
	\end{table}%
	
	\begin{table}[htbp]
		\centering
		\caption{Comparison of average empirical $\ell_1$ risk and logged prediction error for LAD regression on VAR(1) and VAR(2) models with Fr\'echet($\nu$) noise, where $\nu = 1.2$, $1.5$, and $1.8$.}
		\label{tab:invweibull}%
		\renewcommand\arraystretch{1.13}
		\resizebox{1.0\linewidth}{!}{
			\begin{tabular}{c|c|c|cc|cc}
				\bottomrule
				&   \multicolumn{1}{c}{}    &       & \multicolumn{2}{c|}{VAR(1)} & \multicolumn{2}{c}{VAR(2)} \bigstrut\\
				\hline
				& \multicolumn{2}{c|}{\textbf{Fr\'echet($\nu$)}} & \multirow{1}[4]{*}{Risk } & \multicolumn{1}{c|}{\multirow{1}[4]{*}{Prediction error }} & \multirow{1}[4]{*}{Risk } & \multicolumn{1}{c}{\multirow{1}[4]{*}{Prediction error}} \bigstrut\\
				\cline{2-3}          & $\nu$    & $\alpha$ &   (mean)    &     (logged mean )   &  (mean)     &  (logged mean ) \bigstrut\\
				\hline
				\hline
				\multirow{12}[24]{*}{ \rotatebox{90}{\textbf{ $\psi_\alpha$ Truncation }} } & \multirow{4}[8]{*}{1.20 } & 1.050  & 30.986  & 1.271  & 96.804  & 1.454 \bigstrut[t]\\
				&       & 1.10  & 37.940  & 1.281  & 103.261  & 1.428  \\
				&       & 1.15  & 34.453  & 1.273  & 134.532  & 1.429  \\
				&       & 1.18  & 37.656  & 1.293  & 69.266  & 1.449   \bigstrut[b]\\
				\cline{2-7}          & \multirow{4}[2]{*}{1.50 } & 1.10  & 7.942  & 0.841  & 12.024  & 0.944  \bigstrut[t]\\
				&       & 1.20  & 7.962  & 0.842  & 12.429  & 0.942  \\
				&       & 1.30  & 8.013  & 0.847  & 10.880  & 0.934  \\
				&       & 1.40  & 8.132  & 0.855  & 11.463  & 0.933 \bigstrut[b]\\
				\cline{2-7}          & \multirow{4}[2]{*}{1.80 } & 1.20  & 4.404  & 0.652  & 4.998  & 0.686  \bigstrut[t]\\
				&       & 1.40  & 4.385  & 0.653  & 4.939  & 0.679  \\
				&       & 1.60  & 4.391  & 0.657  & 4.898  & 0.678  \\
				&       & 1.70  & 4.400  & 0.660  & 4.884  & 0.675  \bigstrut[b]\\
				\hline
				\hline
				\multirow{3}[6]{*}{ \rotatebox{90}{\textbf{ LAD }} } & 1.20  & \multirow{3}[6]{*}{*} & 1798.187  & 2.818   & 1888.108  & 3.908   \bigstrut[t]\\
				& 1.50  &       & 20.887  & 1.207  & 105.517  & 1.525  \\
				& 1.80  &       & 7.771  & 0.703  & 6.234  & 0.772  \bigstrut[b]\\
				\hline
				\hline
				\multirow{3}[6]{*}{ \rotatebox{90}{\textbf{Huber}} } & 1.20  & \multirow{3}[6]{*}{*} & 908.080  & 2.350  & 950.772  & 2.969   \bigstrut[t]\\
				& 1.50  &       & 13.770  & 1.052  & 56.051  & 1.219  \\
				& 1.80  &       & 5.935  & 0.684  & 5.121  & 0.707 \bigstrut[b]\\
				\toprule
		\end{tabular} }%
	\end{table}%	
	
	Based on the figures and tables obtained, we can find that:
	\begin{itemize}
		\item[$(\runum{1})$] SGD \eqref{eq:SGD catoni} with $\psi_\alpha$ truncation is more efficient and stable than SGD \eqref{eq:SGD emp} without truncation and SGD \eqref{eq:SGD huber} with the Huber loss.
		\item[$(\runum{2})$] The truncation methods are effective in reducing outliers, and $\psi_{\alpha}$ truncation outperforms the Huber one.
		\item[$(\runum{3})$] The $\psi_\alpha$  truncation method yields the smallest means of both empirical risks and prediction errors.
		\item[$(\runum{4})$] For higher dimensions $d$ and orders $p$, the advantages of the $\psi_\alpha$  truncation method become more pronounced.
	\end{itemize}
	
	It is evident that the minimization problem \eqref{e:C-min} with truncation outperforms the non-truncated approach \eqref{e:EmpMin} in the presence of heavy-tailed distributions. Specifically, $\psi_\alpha$ truncation is more effective than the Huber loss. This improvement is particularly notable in cases where the tail distribution is extremely heavy, such as Pareto($ 1.2$) and Fréchet($1.2$). The robustness introduced by the truncation $\psi_\alpha$ effectively mitigates the influence of outliers.
	
	\subsection{Real Data Analysis}
	
	We apply our $\psi_\alpha$ truncation method to the GDP and its decomposition of the United States from 1959 to 2008 \cite{Koop2013, MR4480716}. The total sample size is $ \widetilde{N} = 200$. To make the data stationary, we transform them by the methods described in \cite{MR4480716} and then denote the $t$-th sample by $\bfZ_t$, $t = 1, \dotsc, \widetilde{N}$, where $\bfZ_t \in \bbR^{75}$ for each $t$. According to different transformation methods, we divide $\bfZ_t$ into two parts. That is,  
	\[
	\bfZ_t = ( \bfZ_t^1 , \bfZ_t^2 ), \quad \bfZ_t^1 \in \bbR^{20} \  \text{ and } \  \bfZ_t^2 \in \bbR^{55},
	\]	 
	where the components of $\bfZ_t^1$ are from GDP251 to GDP271, and the components of $\bfZ_t^2$ are from GDP272 to GDP288A. We assume that these data follow a VAR($1$) model. 
	
	To find the tail index, we use the Hill estimator (see, for instance, \cite{MR1939710,MR1482290,MR4174389}). Due to the small sample size $\widetilde{N}$, which limits accurate tail index estimation, we apply the method described in \cite{MR1939710}. For traditional Hill estimators, denoted as $\gamma(k)$, $k=1, \dotsc, \widetilde{N}$, we find that $\gamma(k)$ is approximately linear for $k \leq 160$ according to the first subfigure of \autoref{fig:hill}. We then compute the weighted average of Hill estimators (see, \cite{MR1939710}), denoted as $\gamma^*(\bar{k})$ for $\bar{k}  \le  160$ , resulting in the second subfigure of \autoref{fig:hill}. Consequently, we find $1/\gamma^*(160) \approx 4$, implying that $\bfZ_t$ (or $\bfZ_t^1$, $\bfZ_t^2$) has an $\alpha = 4$-th absolute moment. 
	
	\begin{figure*}[htpb]
		\centering
		\includegraphics[width=0.8\linewidth]{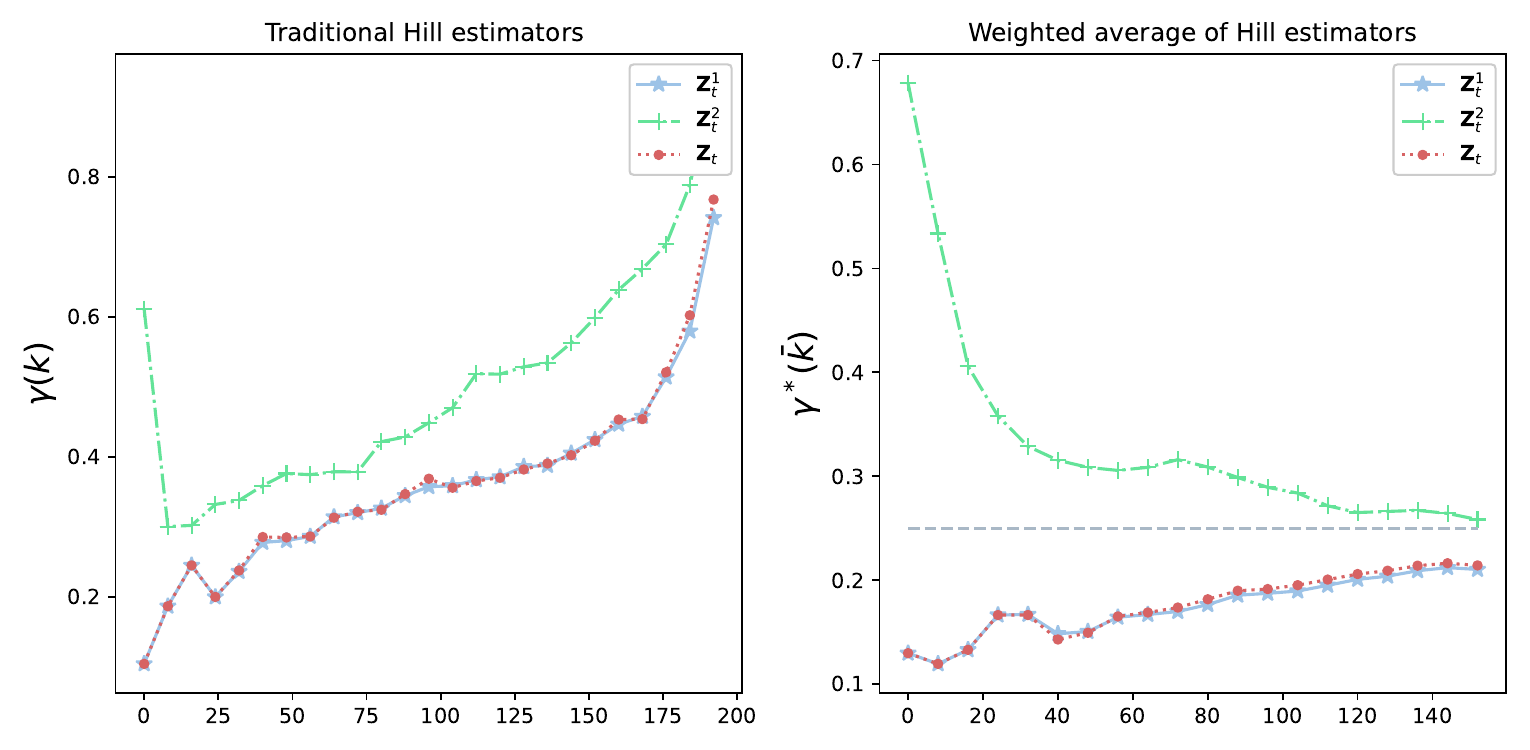}
		\caption{ (Average) Hill estimators for $\bfZ_t^1$, $\bfZ_t^2$ and $\bfZ_t$  }
		\label{fig:hill}
	\end{figure*}
	
	We solve the corresponding minimization problem using SGDs \eqref{eq:SGD catoni}, \eqref{eq:SGD emp} and \eqref{eq:SGD huber}, respectively. For all SGDs, we set the number of iterations to $n = 10000$ and employ a constant learning rate $\eta \equiv 0.08$. We train on $ \bfZ_t$ (or $\bfZ_t^1$, $\bfZ_t^2$) for $1  \le  t  \le  N = 190$. In the $k$-th step of the SGDs, we randomly choose a $t_k \in \{1, \dotsc, 190\}$, and use $\bfZ_{t_k}$ (or $\bfZ_{t_k}^1$, $\bfZ_{t_k}^2$) to obtain $\hat{\bmtheta}_k$, $\bar{\bmtheta}_k$ and $\tilde{\bmtheta}_k$ by SGDs \eqref{eq:SGD catoni}, \eqref{eq:SGD emp} and \eqref{eq:SGD huber} respectively. Finally, we select $\hat{\bmtheta}_k$, $\bar{\bmtheta}_k$ and $\tilde{\bmtheta}_k$ for $k \in \{1000, 1500, \dotsc, 10000\}$ to predict $L = 10$ values and obtain the corresponding prediction errors.  We set the coefficient of the penalty term as $\gamma = 0.5$ to ensure fast convergence of the SGDs, but this may lead to large biases. Therefore, when the rate of change of the empirical  risk falls below a chosen threshold $c_0$, i.e., $\Delta_{k+1} := \abs{R_{\ell_1}^e(\bmtheta_{k+1}) - R_{\ell_1}^e(\bmtheta_k)} / R(\bmtheta_k)  \le  c_0$, we set $\gamma = 0$ to counteract the bias effect of the penalty term. On the other hand, we also compute the rate of change of prediction errors with respect to $\hat{E}^p_{{\rm LAD}, L}$ and $\hat{E}^p_{\psi_{\alpha}, L}$, denoted as $\Delta( {\rm LAD}, \psi_{\alpha})$, given by
	\[
	\Delta( {\rm LAD}, \psi_{\alpha}) := \frac{\hat{E}^p_{ {\rm LAD}, L} - \hat{E}^p_{\psi_{\alpha}, L}}{\hat{E}^p_{ {\rm LAD}, L}}.
	\]
	$\Delta( {\rm LAD}, {\rm Huber})$ and $\Delta( {\rm Huber}, \psi_{\alpha})$ are defined similarly. The experiments are conducted under the following three different situations:	
	\begin{itemize}
		\item[(1)] For $\bfZ_t^1 \in \bbR^{20}$, we set $c_0 = 0.01$.  In the SGD \eqref{eq:SGD catoni}, we let $\alpha = 1.2$ and $\lambda = 0.020$, while in the SGD \eqref{eq:SGD huber}, we let $\tau = 0.5$ and $\sigma = 8$. The outcomes of these setups are illustrated in Figure \autoref{fig:Xt}.		
		\item[(2)] For $\bfZ_t^2 \in \bbR^{55}$, we set $c_0 = 0.03$. In the SGD \eqref{eq:SGD catoni}, we let $\alpha = 1.2$ and $\lambda = 0.040$, while in the SGD \eqref{eq:SGD huber}, we let $\tau = 0.2$ and $\sigma = 10$. The outcomes of these setups are illustrated in Figure \autoref{fig:Yt}.
		\item[(3)] For $\bfZ_t \in \bbR^{75}$, we set $c_0 = 0.04$. In the SGD \eqref{eq:SGD catoni}, we let $\alpha = 1.2$ and $\lambda = 0.080$, and in the SGD \eqref{eq:SGD huber}, we let $\tau = 0.15$ and $\sigma = 12$. The outcomes of these setups are illustrated in Figure \autoref{fig:Zt}.
	\end{itemize}
	
	\begin{figure*}[htpb]
		\centering
		\subfigure[ Simulation values for $\bfZ_t^1 \in \bbR^{20}$. \label{fig:Xt} ]{
			\begin{minipage}[t]{0.8\linewidth}
				\centering
				\includegraphics[width=\linewidth]{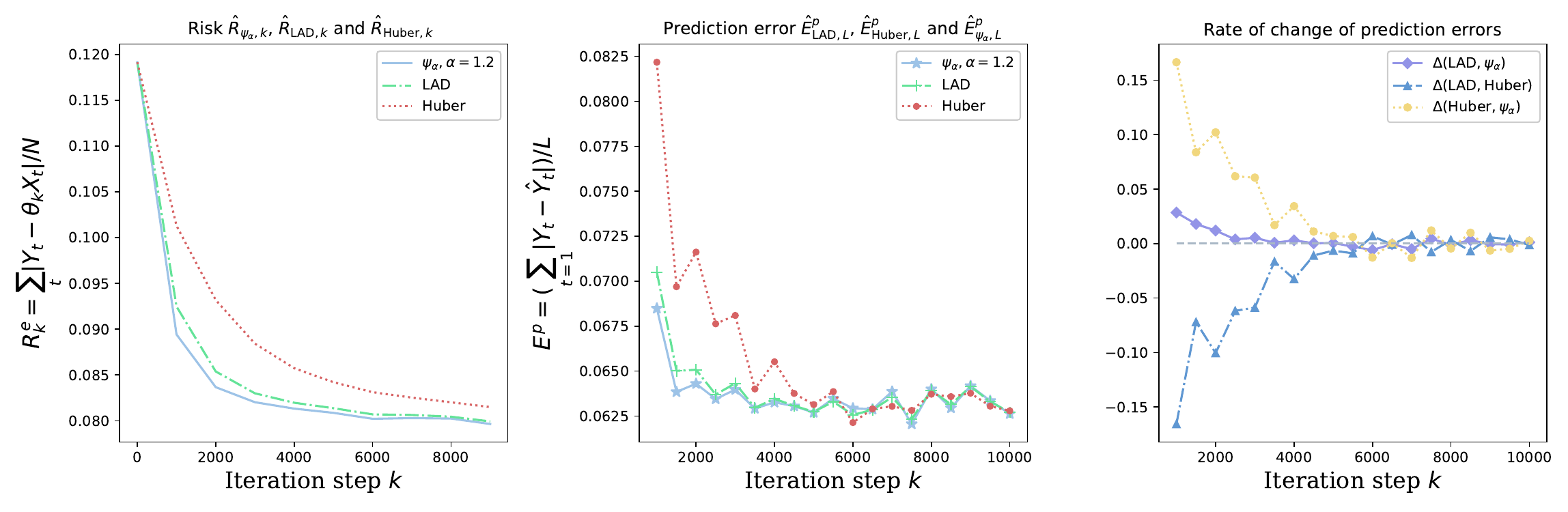}
			\end{minipage}
		}%
		
		\subfigure[ Simulation values for $\bfZ_t^2 \in \bbR^{55}$. \label{fig:Yt}]{
			\begin{minipage}[t]{0.8\linewidth}
				\centering
				\includegraphics[width=\linewidth]{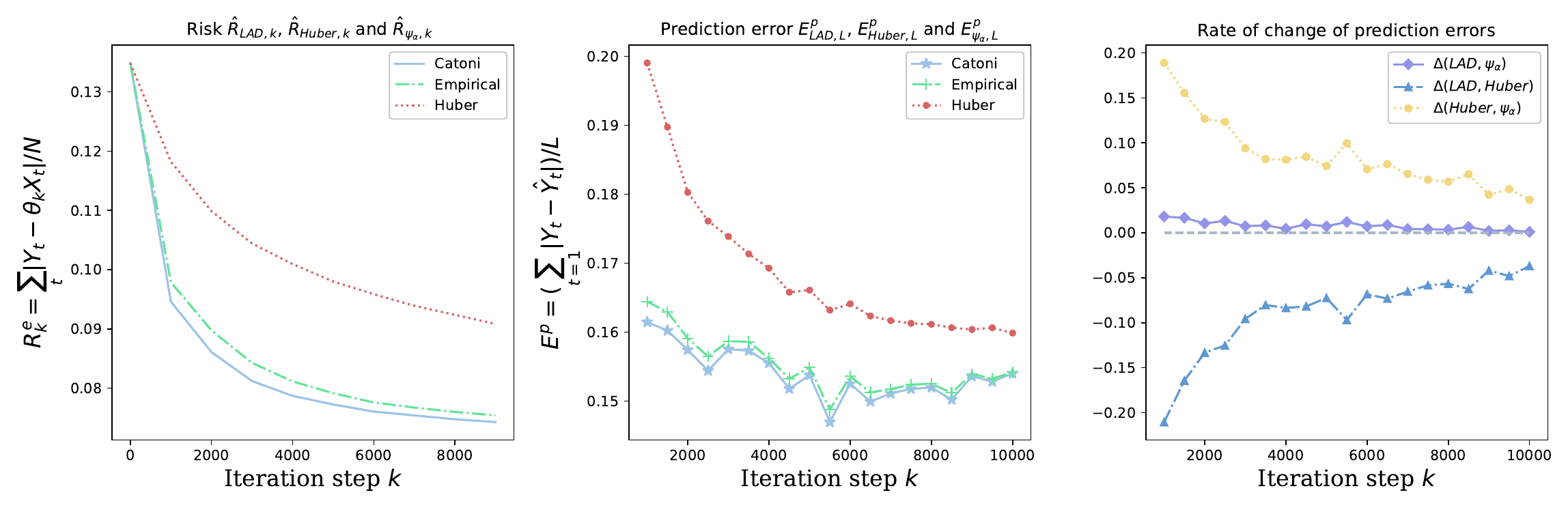}
			\end{minipage}
		}%
		
		\subfigure[ Simulation values for $\bfZ_t \in \bbR^{75}$. \label{fig:Zt} ]{
			\begin{minipage}[t]{0.8\linewidth}
				\centering
				\includegraphics[width=\linewidth]{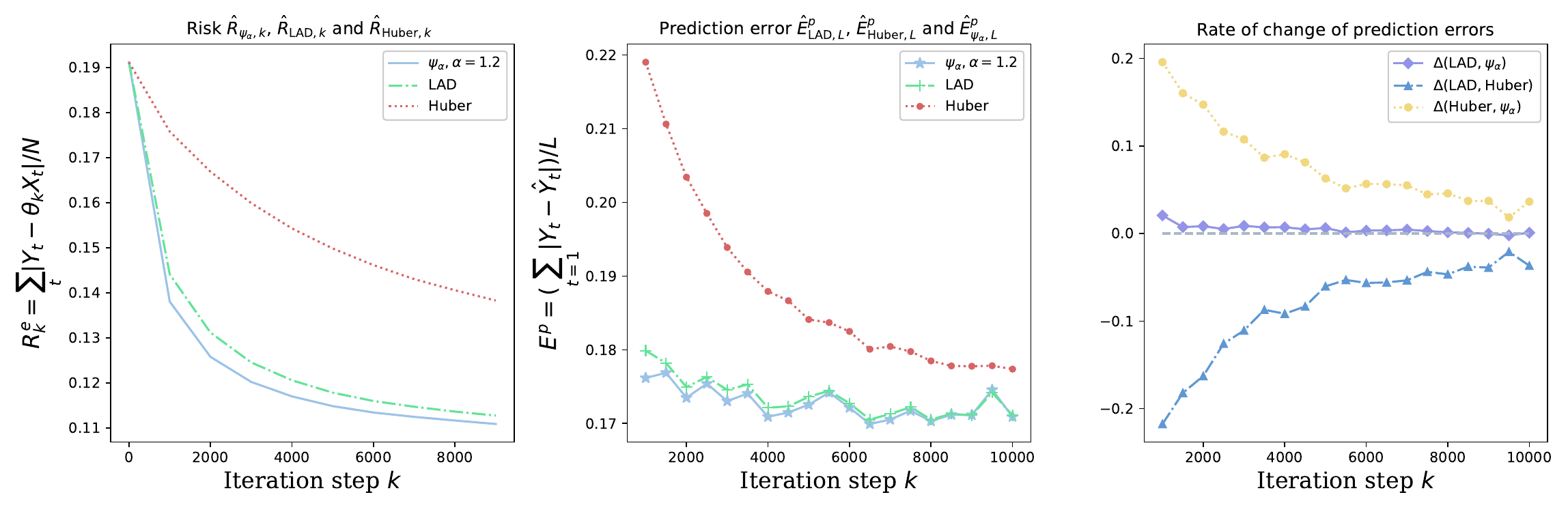}
			\end{minipage}
		}%
		
		\centering
		\caption{  For $\bfZ_t^1 \in \bbR^{20}$, $\bfZ_t^2 \in \bbR^{55}$ and  $\bfZ_t \in \bbR^{75}$, $t = 1, \dotsc, N = 190$, simulations values of risk $\hat{R}_{\psi_{\alpha},k}$, $\hat{R}_{LAD,k}$ and $\hat{R}_{Huber,k}$ with  $1 \le  k  \le  n = 10000$, the  prediction errors $\hat{E}^p_{\psi_{\alpha},L}$, $\hat{E}^p_{LAD,L}$ and $\hat{E}^p_{Huber,L}$ with $L=10$, and the rate of changes of prediction errors $\Delta({\rm LAD}, \psi_{\alpha})$, $\Delta({\rm LAD}, {\rm Huber})$ and $\Delta({\rm Huber}, \psi_{\alpha})$.  }
		\label{figure real}
	\end{figure*}	
	
	Based on \autoref{figure real}, we find that the minimization problem with  $\psi_{\alpha}$ truncation outperforms both the non-truncated approach and the Huber loss. Although the tail is not very heavy, the advantages of using  $\psi_{\alpha}$ are still evident. The Huber truncations do not perform well, we think it may be due to the small size of our data. We can see that when the dimension is high, the improvements of $\psi_\alpha$ truncation are significant.	
	
	\section*{Appendix}
	
	\appendix
	
	\section{Proof of \eqref{e:LADBigPro}}
	\label{s:LADBigPro}
	
	Recalling the definition of $R_{\ell_1}(\theta)$ in \eqref{e:R_1}, it is easy to verify that $R_{\ell_1}(\theta) - R_{\ell_1}(0)  \ge  \rho_n$ if and only if $\abs{\theta}  \ge  1$. Hence, it follows from $\rho_n = 1/\sqrt{n}$ that  
	\begin{equation*}%\label{e:aimP}
		\begin{aligned}
			\bbP\left( R_{\ell_1}(\bar{\theta}) - R_{\ell_1}(\theta^*)  \ge  \frac{1}{\sqrt{n}} \right) 
			= 2 \bbP \left( \bar{\theta}  \ge   1 \right).
		\end{aligned}
	\end{equation*}
	We claim that there exists a constant $n_0\in \mathbb{N}$ such that for all $n > n_0$,
	\begin{equation}
		\label{e:theta>1}
		\bbP \left( \bar{\theta}  \ge   1 \right)  \ge  \frac{9}{32 \rme^2},
	\end{equation}
	which implies \eqref{e:LADBigPro} immediately.
	
	Next, we verify \eqref{e:theta>1}. Recall that the population has the distribution \eqref{e:s_noise}. Then, the corresponding cumulative density function, denoted by $F_{Y}(x)$, is given by
	\begin{equation}\label{e:cdf_Y}
		F_{Y}(x) = \left\{
		\begin{aligned}
			\frac{1-\rho_n}{2 \abs{x}^{\mu}} , \quad & \text{as}\ x < -1, \\
			\frac{1-\rho_n}{2} , \quad & \text{as}\  -1  \le  x < 0 ,\\
			\frac{1+\rho_n}{2} , \quad & \text{as}\  0  \le  x < 1 ,\\
			1- \frac{1-\rho_n}{2 \abs{x}^{\mu}} , \quad & \text{as}\  x  \ge  1 .
		\end{aligned}
		\right.
	\end{equation} 
	Let $Y_{(r)}$ denote the $r$-th order statistic of the observations $\{ Y_i\}_{i=1}^{ 2m+1}$. According to \cite{MR3025012,MR1994955}, the cumulative density function of $Y_{(r)}$ is given by
	\[
	\bbP( Y_{(r)}  \le  x ) = \sum_{j=r}^{2m+1} 
	\begin{pmatrix}
		2m+1 \\ j
	\end{pmatrix}
	\left[ F_{Y}(x) \right]^j \left[ 1-F_{Y}(x) \right]^{2m+1 - j} .
	\]
	Since $\bar{\theta} = Y_{(m+1)}$ is the median, combining this with $F_Y(1) = (1+\rho_n)/2$ by \eqref{e:cdf_Y}, we have that
	\begin{equation}\label{e:pdf}
		\bbP( \bar{\theta}  \le  1 ) = \sum_{j=m+1}^{2m+1} 
		\begin{pmatrix}
			2m+1 \\ j
		\end{pmatrix}
		\left[ \frac{1+\rho_n}{2} \right]^j \left[ \frac{1-\rho_n}{2} \right]^{2m+1 - j}.
	\end{equation}
	
	Let $X\sim \mathrm{Bin}(2m+1, (1+\rho_n)/2)$ represent the binomial distribution with parameter $2m+1$ and $(1+\rho_n)/2$. We then observe that $\bbP( \bar{\theta}  \le  1 ) = \bbP(X \ge  m+1)$, which leads that
	\begin{equation}\label{e:aimP}
		\begin{aligned}
			\bbP \left( \bar{\theta}  \ge   1 \right) & \ge  1- \bbP( \bar{\theta}  \le  1 ) = \bbP(X  \le  m).
		\end{aligned}
	\end{equation} 
	On the other hand, let $\{Z_i\}_{i=1}^{ 2m+1} $ be a sequence of i.i.d. random variables following a Bernoulli distribution with parameter $(1+\rho_n)/2$. Then, we have that  
	\[
	\begin{aligned}
		\bbP(X  \le  m) &= \bbP\left( \frac{ \sum_{j=1}^{2m+1} Z_j - (2m+1) \bbE Z_1 }{ \left[ (2m+1) \mathrm{Var}(Z_1) \right] }  \le  - \frac{(2m+1)\rho_n + 1 }{\left[ (2m+1)(1-\rho_n^2) \right]^{1/2}} \right).
	\end{aligned}
	\]
	According to $\rho_n = 1/\sqrt{n}$ independent of $n$, it holds that
	\[
	1 < \frac{(2m+1)\rho_n + 1 }{\left[ (2m+1)(1-\rho_n^2) \right]^{1/2}} = \frac{\sqrt{2m+1}+1}{\sqrt{2m}} < 2.
	\]
	By Cram\'er type moderate deviation \cite{MR137142,MR388499,MR4580924} (the left side tail probability), as $m \ge m_0$ with $m_0 \in \mathbb N$ being some constant, we have
	\[
	\begin{aligned}
		&\pheq
		\left|\frac{\bbP(X  \le  m)}{\Phi\left( -(\sqrt{2m+1}+1)/\sqrt{2m} \right)}-1\right|  \le  \frac 14 ,
	\end{aligned}
	\]
	where $\Phi(x)$ is the cumulative density function of standard normal distribution. Hence, 
	\begin{equation}\label{e:aimP2}
		\begin{aligned}
			%&\pheq
			\bbP(X  \le  m)  \ge  \frac 34 \Phi\left(-\frac{ \sqrt{2m+1} + 1 }{\sqrt{2m}} \right) 
			\ge  \frac34 \Phi(-2) 
			\ge  \frac{9}{32 \rme^2},
		\end{aligned}
	\end{equation}
	where the last inequality is by $\Phi(-x) = 1-\Phi(x)  \ge  (x^{-1} - x^{-3}) \rme^{-x^2/2}$ for all $x>0$. 
	Combining these with \eqref{e:aimP} and \eqref{e:aimP2} leads to that as $n  \ge  n_0$ (recall $n=2m+1$) with some $n_0 \in \mathbb{N}$ being some constant, \eqref{e:theta>1} holds.
	
	\section{Auxiliary Lemmas}
	\label{sec:sec3}
	
	\begin{lemma}
		{\cite[Lemma 2.1]{MR0871254}}
		\label{lemma:beta-mixing}
		Let $\bfZ_1, \dotsc, \bfZ_n$ be real random variables (or vectors) on the same probability space. Define for $1 \le  i <n$
		\[
		\beta^{(i)} = \beta( \bfZ_i , ( \bfZ_{i+1} , \dotsc, \bfZ_n ) ).
		\]
		The probability space can be extended with random variables (or vectors respectively) $\widetilde{\bfZ}_i$ distributed as $\bfZ_i$ such that $\widetilde{\bfZ}_1, \dotsc, \widetilde{\bfZ}_n$ are independent and
		\[
		\bbP( \widetilde{\bfZ}_j \neq  \bfZ_j ,\ \text{for some}\ j=1,\dotsc,n )  \le  \beta^{(1)} + \cdots + \beta^{(n-1)}.
		\]
	\end{lemma}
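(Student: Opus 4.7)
The plan is to prove this by induction on $n$, with Berbee's classical maximal coupling lemma as the single-step engine. Berbee's lemma asserts that if $X$ and $Y$ are two random variables (or vectors) on a common probability space with $\beta(X, Y) = \beta$, then on a possibly enlarged probability space one can construct $\widetilde{Y}$ with the same distribution as $Y$, independent of $X$, and satisfying $\bbP(\widetilde{Y} \neq Y) \le \beta$. This one-step coupling is the only non-elementary tool needed; I would quote it as a self-contained fact and then bootstrap it.

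For the base case $n = 2$, set $\widetilde{\bfZ}_1 = \bfZ_1$ and apply Berbee's coupling to the pair $(\bfZ_1, \bfZ_2)$ to obtain $\widetilde{\bfZ}_2$ having the law of $\bfZ_2$, independent of $\widetilde{\bfZ}_1$, with $\bbP(\widetilde{\bfZ}_2 \neq \bfZ_2) \le \beta^{(1)}$. For the inductive step, assume the statement has been established for sequences of length $n-1$. Apply Berbee's coupling to the pair consisting of $\bfZ_1$ and the block $(\bfZ_2, \dotsc, \bfZ_n)$: since by definition $\beta^{(1)} = \beta(\bfZ_1, (\bfZ_2, \dotsc, \bfZ_n))$, this yields $\widetilde{\bfZ}_1$ with the law of $\bfZ_1$, independent of the whole block $(\bfZ_2, \dotsc, \bfZ_n)$, and with $\bbP(\widetilde{\bfZ}_1 \neq \bfZ_1) \le \beta^{(1)}$. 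Next, apply the induction hypothesis to $(\bfZ_2, \dotsc, \bfZ_n)$ to obtain mutually independent $\widetilde{\bfZ}_2, \dotsc, \widetilde{\bfZ}_n$ with matching marginals and with
\[
\bbP\bigl(\widetilde{\bfZ}_j \neq \bfZ_j \text{ for some } 2 \le j \le n\bigr) \le \beta^{(2)} + \cdots + \beta^{(n-1)}.
\]
A union bound combining the two coupling errors then delivers the announced estimate.

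The delicate point — the main obstacle — is arranging all these couplings on a common probability space so that the resulting $\widetilde{\bfZ}_1, \dotsc, \widetilde{\bfZ}_n$ are \emph{jointly} independent, not merely pairwise or conditionally independent. Berbee's single-step coupling is typically built using extra auxiliary randomness (for instance, a uniform variable that drives the maximal coupling construction); the inductive step also produces its own independent sources of auxiliary randomness. I would first enlarge the probability space to carry all these auxiliary variables independently of one another and of the original sequence $(\bfZ_1, \dotsc, \bfZ_n)$. Then $\widetilde{\bfZ}_1$ is a measurable function of $\bfZ_1$ and its auxiliary variable, while $(\widetilde{\bfZ}_2, \dotsc, \widetilde{\bfZ}_n)$ is a measurable function of $(\bfZ_2, \dotsc, \bfZ_n)$ together with its own auxiliary variables. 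Because Berbee's coupling makes $\widetilde{\bfZ}_1$ independent of the entire block $(\bfZ_2, \dotsc, \bfZ_n)$, and all auxiliary randomness is independent by construction, $\widetilde{\bfZ}_1$ is independent of $(\widetilde{\bfZ}_2, \dotsc, \widetilde{\bfZ}_n)$. Combined with the mutual independence of $\widetilde{\bfZ}_2, \dotsc, \widetilde{\bfZ}_n$ supplied by the induction hypothesis, this yields joint independence of the full sequence and closes the induction.
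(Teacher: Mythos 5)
The paper offers no proof of this lemma to compare against: it is imported verbatim as Lemma~2.1 of the cited reference and used as a black box throughout. Your reconstruction --- induction on $n$ with Berbee's maximal coupling as the one-step engine --- is the standard proof of this statement and is correct in substance: the first coupling contributes $\beta^{(1)}$, the induction hypothesis applied to $(\bfZ_2,\dotsc,\bfZ_n)$ contributes $\beta^{(2)}+\cdots+\beta^{(n-1)}$, and a union bound over the two bad events gives the stated estimate. One imprecision in your independence bookkeeping is worth flagging: in Berbee's construction $\widetilde{\bfZ}_1$ is in general a measurable function of the \emph{entire} tuple $(\bfZ_1,\bfZ_2,\dotsc,\bfZ_n)$ together with its auxiliary uniform variable, not of $\bfZ_1$ and the auxiliary variable alone --- it must ``see'' the block in order to be simultaneously maximally coupled with $\bfZ_1$ and independent of the block. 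This does not break your argument, but the justification should be phrased differently: what you actually need is (a) $\widetilde{\bfZ}_1$ independent of $(\bfZ_2,\dotsc,\bfZ_n)$, which Berbee supplies directly, and (b) that the auxiliary randomness driving the inductive step is independent of the $\sigma$-field generated by $(\bfZ_1,\dotsc,\bfZ_n)$ and the first auxiliary variable. From (a) and (b) one gets that $\widetilde{\bfZ}_1$ is independent of the pair consisting of the block and the new auxiliaries, and since $(\widetilde{\bfZ}_2,\dotsc,\widetilde{\bfZ}_n)$ is a measurable function of that pair, joint independence of the full sequence follows and the induction closes. With that repair your proof is complete, and it is essentially the argument found in the cited source.
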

	
	\begin{lemma}
		{\cite[Lemma 7]{ZhangAnru8368145}}\label{lem:cov_num of Matrix}
		Suppose $\norm{\cdot}$ is any matrix norm. Let $\bm{\Theta} = \{ \bmtheta \in \bbR^{d_2\times d_1} : \rank(\bmtheta)  \le  \kappa,  \norm{\bmtheta}  \le  1  \}$ be the class of low-rank matrices under norm $\norm{\cdot}$. Then there exists a $\delta$-net $\bm{\mathcal{N}}_{d_1,d_2}(\kappa)$ for $\bm{\Theta}$ with cardinality at most $\big((4+\delta)/\delta\big)^{(d_1+d_2)\kappa}$.
	\end{lemma}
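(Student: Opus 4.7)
The strategy is to replace the rank constraint by a two--factor parameterization and then cover each factor separately by the standard volumetric unit--ball argument, with accuracies tuned so that the base of the exponent matches $(4+\delta)/\delta$ exactly.

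\emph{Step 1: Two--factor parameterization.} By the singular value decomposition, any $\bmtheta \in \bm{\Theta}$ admits $\bmtheta = \bfU \bfD \bfV^\T$ with $\bfU \in \bbR^{d_2 \times \kappa}$ and $\bfV \in \bbR^{d_1 \times \kappa}$ having orthonormal columns, and $\bfD = \diag(\sigma_1,\dotsc,\sigma_\kappa)$ with nonnegative singular values. Setting $\bfA = \bfU \bfD^{1/2}$ and $\bfB = \bfV \bfD^{1/2}$, I write $\bmtheta = \bfA \bfB^\T$. Because $\sigma_i \le \opnorm{\bmtheta} \le 1$, both factors satisfy $\opnorm{\bfA}, \opnorm{\bfB} \le 1$. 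Thus covering $\bm{\Theta}$ is reduced to covering the pair $(\bfA,\bfB)$ in the operator--norm unit balls $\mathcal{B}_A \subseteq \bbR^{d_2 \times \kappa}$ and $\mathcal{B}_B \subseteq \bbR^{d_1 \times \kappa}$.

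\emph{Step 2: Cover each factor.} For any $n$--dimensional normed space, the classical volume--packing argument gives that the $\epsilon$--covering number of its unit ball is at most $(1 + 2/\epsilon)^n = ((2+\epsilon)/\epsilon)^n$. Taking $\epsilon = \delta/2$, and applying this to $\mathcal{B}_A$ (of dimension $d_2 \kappa$) and to $\mathcal{B}_B$ (of dimension $d_1 \kappa$) yields $(\delta/2)$--nets $\bm{\mathcal{N}}_A$ and $\bm{\mathcal{N}}_B$ whose cardinalities are at most $\bigl((4+\delta)/\delta\bigr)^{d_2\kappa}$ and $\bigl((4+\delta)/\delta\bigr)^{d_1\kappa}$ respectively.

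\emph{Step 3: Product net and error bound.} Define
\[
\bm{\mathcal{N}}_{d_1,d_2}(\kappa) = \bigl\{\, \tilde\bfA \tilde\bfB^\T \ :\ \tilde\bfA \in \bm{\mathcal{N}}_A,\ \tilde\bfB \in \bm{\mathcal{N}}_B \,\bigr\},
\]
whose cardinality is at most the product $\bigl((4+\delta)/\delta\bigr)^{(d_1+d_2)\kappa}$. Given $\bmtheta = \bfA \bfB^\T$, pick $\tilde\bfA \in \bm{\mathcal{N}}_A$ and $\tilde\bfB \in \bm{\mathcal{N}}_B$ within operator--norm distance $\delta/2$ and decompose
\[
\bmtheta - \tilde\bfA \tilde\bfB^\T = (\bfA - \tilde\bfA) \bfB^\T + \tilde\bfA (\bfB - \tilde\bfB)^\T.
\]
Submultiplicativity of the operator norm and the bounds $\opnorm{\bfB}, \opnorm{\tilde\bfA} \le 1$ then give $\opnorm{\bmtheta - \tilde\bfA \tilde\bfB^\T} \le \delta/2 + \delta/2 = \delta$, establishing that $\bm{\mathcal{N}}_{d_1,d_2}(\kappa)$ is a $\delta$--net of the stated size.

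\emph{Main obstacle.} The entire argument is routine once the parameterization is chosen; the only delicate point is matching the stated exponent $(d_1+d_2)\kappa$ exactly, with no extra factor of $\kappa$ coming from a separate net for the singular values. This is precisely why I absorb $\bfD^{1/2}$ symmetrically into the two orthonormal factors, rather than keeping the three--factor SVD $\bfU \bfD \bfV^\T$ and building a third diagonal cover; the latter would produce a superfluous $((4+\delta)/\delta)^{\kappa}$ factor. A secondary bookkeeping point is that elements of $\bm{\mathcal{N}}_{d_1,d_2}(\kappa)$ have rank at most $\kappa$ (hence lie in $\bm{\Theta}$'s rank slice) but may have slightly perturbed norm; this is harmless since the covering definition only requires approximating every element of $\bm{\Theta}$ to within $\delta$, not that the net be a subset of $\bm{\Theta}$.
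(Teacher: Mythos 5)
The paper does not prove this lemma at all: it is imported verbatim as \cite[Lemma 7]{ZhangAnru8368145}, so there is no internal proof to compare against. Your two-factor argument ($\bmtheta=\bfA\bfB^{\T}$ with $\bfD^{1/2}$ split symmetrically, volumetric $(1+2/\epsilon)^{n}$ nets on each factor at scale $\epsilon=\delta/2$, and the telescoping bound $\opnorm{(\bfA-\tilde\bfA)\bfB^{\T}}+\opnorm{\tilde\bfA(\bfB-\tilde\bfB)^{\T}}\le\delta$) is exactly the standard proof of the cited result, the exponent bookkeeping is right, and your observation that a three-factor SVD cover would cost an extra $((4+\delta)/\delta)^{\kappa}$ is the correct reason for the symmetric split; note also that your net actually lies inside $\bm{\Theta}$ (since $\rank(\tilde\bfA\tilde\bfB^{\T})\le\kappa$ and $\opnorm{\tilde\bfA\tilde\bfB^{\T}}\le 1$), which matters because the paper's Definition of a $\delta$-net requires $\bm{\mathcal N}\subseteq\bm{\mathcal K}$. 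The one caveat is that the lemma as stated claims ``any matrix norm,'' whereas your Steps 1 and 3 use $\sigma_i\le\opnorm{\bmtheta}$ and submultiplicativity, which are specific to the operator norm (or norms comparable to it); since Assumption \ref{A3:net} and both applications in the paper use only $\norm{\cdot}_{\rm op}$, this does not affect anything downstream, but your proof does not literally establish the statement in its full stated generality.
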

	
	\section{$\beta$-mixing Conditions for VAR($p$) Model}
	\label{appendix: mixing}
	
	In order to apply the result in Section \ref{sec:sec2} to VAR($p$) model $\{ \bfZ_t, t \in \mathbb{Z} \}$ in \eqref{e:VAR(p)}, we need to verify it is an exponential $\beta$-mixing sequence. To this end,  
	let us first introduce the result in Pham and Tran \cite{PHAM1985297} briefly as below. 
	
	Assume that there exist a sequence of independent random vectors $\{ \bmepsilon_t, t\in \mathbb{Z}\}$ in $\bbR^d$ and matrices $\{\bfA_j \in \bbR^{d\times d}, j \in \mathbb{N} \}$ such that series $(\bfZ_t)_{t \ge  0}$ can be represented by
	\begin{equation}\label{def: Xt}
		\bfZ_t = \sum_{j=0}^{\infty} \bfA_j \bmepsilon_{t-j} , \quad \bfA_0 = \bfI_d .
	\end{equation}
	Assume that noise $\bmepsilon_t$ admits a density $g_t$.  Define $\gamma_j$ as 
	\begin{equation}
		\label{e:gamma(j)}
		\gamma_j = \sum_{k  \ge  j} \rho({\bfA_k}), \quad j  \ge  0 .
	\end{equation}
	Recall that $\rho(\bfA_k)$ denotes the maximum modulus of the eigenvalues of  matrix $\bfA_k$. 
	Let the following assumption hold, which can be seen in \cite[Lemma 2.2]{PHAM1985297}.
	
	\begin{assumption}
		\label{assump:coe.}
		Suppose that $g_t$ and $\bfA_j$ satisfy:
		\begin{itemize}
			\item[$(\runum{1})$] There exists some constant $M>0$ such that $ \int \abs{ g_t(\bfx-\bfy) - g_t(\bfx) } \dd \bfx < M \abs{\bfy} $ for all $t$;
			\item[$(\runum{2})$] For all $z \in \mathbb{C}$ with $\abs{z}  \le  1$,
			\[
			\sum_{j=0}^{\infty} \rho(\bfA_j) < \infty , \quad \text{and} , \quad \sum_{j=0}^{\infty} \bfA_j z^j  \neq \zero ,
			\]
		\end{itemize}
	\end{assumption}
	
	\begin{theorem}{\cite[Theorem 2.1]{PHAM1985297}}
		\label{thm:mixing condition}
		Let process $\bfZ_t$ be defined in \eqref{def: Xt}. Let Assumption \ref{assump:coe.} hold, and assume that there is a constant $K>0$ such that $\bbE \abs{ \bmepsilon_t}^\alpha < K$ for some $\alpha >0$ and for all $t$. If 
		\[
		\sum_{j=1}^{\infty} \gamma_j^{{\alpha} / {(1+\alpha)}} < \infty ,
		\]
		where $\gamma_j$ are defined as in \eqref{e:gamma(j)}. Then the $\beta$-mixing coefficient of $\bfZ^{ \le  0} = \big(\bfZ_k, k  \le  0\big)$ and $\bfZ^{ \ge  n} = \big( \bfZ_k, k  \ge  n \big)$ satisfies
		\begin{equation}
			\label{ab. regular}
			\beta(\bfZ^{ \le  0}, \bfZ^{ \ge  n})  \le  K \sum_{j=n}^{\infty}  \gamma_j^{{\alpha} / {(1+\alpha)}} .
		\end{equation}
	\end{theorem}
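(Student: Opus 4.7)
The plan is to reduce the $\beta$-mixing coefficient to a total variation distance between the conditional and marginal laws of $\bfZ^{\ge n}$, exploit the causal linear representation \eqref{def: Xt} to separate past and future noise contributions, and then combine the density smoothness in Assumption~\ref{assump:coe.}(i) with the $\alpha$-moment bound via a Markov-type truncation whose optimal scale produces the exponent $\alpha/(1+\alpha)$.

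First, for $t \ge n$ I would write $\bfZ_t = \bfY_t + \bfW_t$ with
\[
\bfY_t = \sum_{s=1}^{t} \bfA_{t-s}\bmepsilon_s, \qquad \bfW_t = \sum_{s\le 0} \bfA_{t-s}\bmepsilon_s,
\]
so that $\bfY^{\ge n} := (\bfY_t)_{t\ge n}$ is independent of $\bfU := (\bmepsilon_s)_{s\le 0}$, while $\bfW^{\ge n}$ is $\sigma(\bfU)$-measurable. Since $\bfZ^{\le 0}$ is a function of $\bfU$, letting $\mu(\cdot\mid\bfU)$ and $\mu$ denote the conditional and marginal laws of $\bfZ^{\ge n}$,
\[
\beta(\bfZ^{\le 0},\bfZ^{\ge n}) = \mathbb{E}\,\|\mu(\cdot\mid\bfU)-\mu\|_{\mathrm{TV}} \le \mathbb{E}\,\|\mathcal{L}(\bfY^{\ge n}+\bfW(\bfU))-\mathcal{L}(\bfY^{\ge n}+\bfW(\bfU'))\|_{\mathrm{TV}},
\]
where $\bfU'$ is an independent copy of $\bfU$ and the right side is the TV between two translates of the law of $\bfY^{\ge n}$ by the random vectors $\bfW(\bfU)$ and $\bfW(\bfU')$. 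The crucial observation is that every coefficient appearing in $\bfW_t$ is an $\bfA_j$ with $j=t-s\ge t\ge n$, so the shift lives entirely in the tail indexed by $j\ge n$.

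Second, I would truncate: for each $s\le 0$ split $\bmepsilon_s = \bmepsilon_s\mathbbm{1}_{|\bmepsilon_s|\le c_s} + \bmepsilon_s\mathbbm{1}_{|\bmepsilon_s|>c_s}$ at a level $c_s$ to be chosen. The event $\bigcup_{s\le 0}\{|\bmepsilon_s|\vee|\bmepsilon_s'|>c_s\}$ contributes at most $2K\sum_{s\le 0} c_s^{-\alpha}$ to the TV by Markov and the $\alpha$-moment bound. On its complement the shift vector $\bfV$ has coordinates bounded by $\sum_{s\le 0}\rho(\bfA_{t-s})c_s$, and the TV between the law of $\bfY^{\ge n}$ and its translate by $\bfV$ is controlled by $M\sum_{t\ge n}\sum_{s\le 0}\rho(\bfA_{t-s})c_s$ using Assumption~\ref{assump:coe.}(i). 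This step requires transferring the scalar smoothness of each $g_t$ to a joint $L^1$-type smoothness of the density of $\bfY^{\ge n}$: the change of variables from $(\bfY_t)_{t\ge 1}$ to $(\bmepsilon_s)_{s\ge 1}$ is triangular linear with unit Jacobian, so a translate in $\bfY$-coordinates is a corresponding translate in the independent product $\prod_s g_s$, whose TV decomposes coordinate-by-coordinate via $\int|g_s(\bfx-\bfy)-g_s(\bfx)|\,d\bfx\le M|\bfy|$.

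Finally, optimizing $c_s$ in the combined bound $2K\sum_{s\le 0}c_s^{-\alpha}+M\sum_{t\ge n}\sum_{s\le 0}\rho(\bfA_{t-s})c_s$ index-by-index produces a per-coefficient contribution of order $\rho(\bfA_j)^{\alpha/(1+\alpha)}$ at each $j\ge n$, and a telescoping comparison using $\gamma_j=\sum_{k\ge j}\rho(\bfA_k)$ converts $\sum_{j\ge n}\rho(\bfA_j)^{\alpha/(1+\alpha)}$ into the claimed $K\sum_{j=n}^\infty\gamma_j^{\alpha/(1+\alpha)}$ (by H\"older together with the concavity of $x\mapsto x^{\alpha/(1+\alpha)}$ for $\alpha\in(0,1]$, respectively by an Abel summation when $\alpha>1$). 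Passage from finite initial segments $(\bfZ_n,\ldots,\bfZ_N)$ to the full tail $\sigma$-algebra is obtained by monotone continuity of the TV norm and the fact that the bound is uniform in $N$. The main obstacle is the multivariate smoothness transfer in Step~2: one must ensure that summing the coordinate-wise shift contributions over the whole (infinite) tail does not accumulate uncontrollably, which is exactly why the pointwise coupling with per-index truncation is used in place of a single global threshold — the resulting double sum telescopes cleanly into the tail functional $\gamma_j$ that appears in the statement.
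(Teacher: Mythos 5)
First, a point of reference: the paper does not prove this statement at all --- it is quoted verbatim from Pham and Tran \cite[Theorem 2.1]{PHAM1985297} and used as a black box to obtain Lemma~\ref{lem:mixing VARp}. So your argument has to stand on its own. Its architecture is the natural (and, I believe, the intended) one: reduce $\beta$ to an expected total-variation distance between the conditional and unconditional laws of the future, split $\bfZ_t=\bfY_t+\bfW_t$ into a future-noise part and a past-measurable shift supported on coefficients $\bfA_j$ with $j\ge n$, control the translate through the $L^1$-smoothness of the noise densities, and optimize a per-index truncation to generate the exponent $\alpha/(1+\alpha)$. The truncation arithmetic also comes out right, although not quite as you describe it: the coefficient of $c_s$ in the linear term is $M\sum_{t\ge n}\rho(\bfA_{t-s})=M\gamma_{n-s}$, so the index-by-index optimization yields $\gamma_{n-s}^{\alpha/(1+\alpha)}$ for each $s\le 0$ directly, and summing over $s\le 0$ gives $\sum_{j\ge n}\gamma_j^{\alpha/(1+\alpha)}$ with no ``telescoping comparison'' needed; your intermediate claim of a per-coefficient contribution $\rho(\bfA_j)^{\alpha/(1+\alpha)}$ is a miscalculation, though a harmless one.

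The genuine gap is in Step 2. A translate of the law of $(\bfY_t)_{t\ge n}$ by $\bfV=(\bfV_t)_{t\ge n}$ is \emph{not} realized by translating the product measure $\prod_s g_s$ by $\bfV$: writing $\bfY=\bfL\bmepsilon$ with $\bfL$ the block lower-triangular Toeplitz matrix built from the $\bfA_j$, the law of $\bfY+\bfV$ is the law of $\bfL(\bmepsilon+\bfL^{-1}\bfV)$, so the shift seen by the noise coordinates is $\bfL^{-1}\bfV$, not $\bfV$. Your bound $M\sum_{t\ge n}\abs{\bfV_t}$ therefore silently assumes $\sum_s\abs{[\bfL^{-1}\bfV]_s}\le\sum_t\abs{\bfV_t}$, which fails in general; what is actually needed is an $\ell^1\to\ell^1$ bound on $\bfL^{-1}$, i.e.\ absolute summability of the coefficients of the inverse series $\big(\sum_j\bfA_jz^j\big)^{-1}$. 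That is exactly where the hypothesis $\sum_j\bfA_jz^j\neq\zero$ for $\abs{z}\le1$ in Assumption~\ref{assump:coe.}$(\runum{2})$ must enter (via a Wiener-algebra argument), and the fact that your proof never invokes that hypothesis is the tell-tale sign that the step is missing. A secondary elision of the same kind: bounding $\abs{\bfA_{t-s}\bmepsilon_s}$ by $\rho(\bfA_{t-s})\abs{\bmepsilon_s}$ uses the spectral radius where the operator norm is required. Until the passage from a shift of the observed coordinates to a summable shift of the noise coordinates is justified, the bound $M\sum_{t\ge n}\sum_{s\le0}\rho(\bfA_{t-s})c_s$, and hence the final estimate, does not follow.
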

	
	Now let us come back to our VAR$(p)$ model in \eqref{e:VAR(p)}: 
	\begin{equation*}
		\bfZ_{t+1} = \bmPhi_1 \bfZ_t + \bmPhi_2 \bfZ_{t-1} + \cdots + \bmPhi_{p} \bfZ_{t+1-p} + \bm{\varepsilon}_{t+1},
	\end{equation*}
	where $\bmPhi_i \in \bbR^{d\times d}$, $i=1, \dotsc, p$, and $\bmepsilon_{t+1}$ is a sequence i.i.d. noises. Observe that 
	\[
	\begin{aligned}
		\bfY_{t+1} &:= 
		\begin{bmatrix}
			\bfZ_{t+1} \\
			\bfZ_t \\
			\vdots \\
			\bfZ_{t+2-p}
		\end{bmatrix}
		\! = \!
		\begin{bmatrix}
			\bmPhi_1   & \cdots  & \bmPhi_{p-1} & \bmPhi_{p} \\
			\bfI_d     & \cdots  &    \zero     & \zero \\
			\vdots     & \ddots  &    \vdots    & \vdots \\
			\zero      & \cdots  &    \bfI_d    & \zero
		\end{bmatrix} \!\!
		\begin{bmatrix}
			\bfZ_t \\
			\bfZ_{t-1} \\
			\vdots \\
			\bfZ_{t+1-p}
		\end{bmatrix}
		\!+\!
		\begin{bmatrix}
			\bmepsilon_{t+1} \\
			\zero \\
			\vdots \\
			\zero
		\end{bmatrix} 
		=: \bmPsi \cdot \bfY_t + \bmzeta_{t+1} ,
	\end{aligned}
	\]
	where the matrix $\bmPsi$ satisfies Assumption \ref{A6:stationary}. Then, a simple calculation yields that   
	\[
	\bfY_{t+1} = \sum_{j=0}^{\infty} \bmPsi^j \bmzeta_{t+1-j}, \quad \bmPsi^0 = \bfI_{pd}, 
	\]
	which leads to 
	\begin{equation*}
		\bfZ_{t+1} = \sum_{j=0}^{\infty} \bmPsi_j \bmepsilon_{t+1-j} , 
	\end{equation*}
	where $\bmPsi_0 = \bfI_d$, $\bmPsi_j = \zero$ for all $j <0$ , and  
	\begin{align*}
		\bmPsi_1 &= \bmPhi_1 , \\
		\bmPsi_2 &= \bmPhi_1 \bmPsi_1 + \bmPhi_2 , \\
		&\ \ \vdots \\
		\bmPsi_p &= \bmPhi_1 \bmPsi_{p-1} + \bmPhi_2 \bmPsi_{p-2} + \cdots + \bmPhi_{p} , \\
		\bmPsi_k &= \bmPhi_1 \bmPsi_{k-1} + \bmPhi_2 \bmPsi_{k-2} + \cdots + \bmPhi_{p} \bmPsi_{k-p} , \ k  \ge  p .
	\end{align*}
	
	Recall Assumption \ref{A6:stationary} and the matrix $\bm{\Psi}$ therein, for all $k\in \mathbb{N}$, there exists a constant $C_{\rm op}$ such that $\opnorm{\bmPsi^k}  \le  C_{\rm op} \rho^k$. Due to $\bmPsi_k$ is the submatrix of $ \bmPsi^k $ formed from rows $\{1,\dotsc,n\}$ and columns $\{1,\dotsc,n\}$, we know that
	\[
	\rho(\bmPsi_k)  \le  \opnorm{\bmPsi_k}  \le  \opnorm{\bmPsi^k}  \le  C_{\rm op} \rho^k, \quad \forall \ k \ge  0.
	\]
	Then $\gamma_j$ defined in \eqref{e:gamma(j)} satisfies
	\[
	\gamma_j =  \sum_{k  \ge  j}\rho( {\bmPsi_k} )  \le   C_{\rm op} \sum_{k  \ge  j} \rho^k  \le  C_{\rm op} \frac{\rho^j}{1-\rho},
	\]
	which leads to $\sum_{j=1}^{\infty } \gamma_j^{{\alpha}/{(1+\alpha)}} < \infty$ immediately. Then, if Assumption \ref{A5:noise} holds additionally, we can apply Theorem \ref{thm:mixing condition} to obtain that
	\[
	\beta(n)  \le  K \sum_{j=n}^{\infty} \gamma_j^{{\alpha}/{(1+\alpha)}}  \le  \frac{K C_{\rm op}^{\alpha/(1+\alpha)}}{ (1-\rho)^{\alpha/(1+\alpha)} (1-\rho^{\alpha/(1+\alpha)}) } \cdot \exp\left\{ -n\cdot \frac{\alpha}{1+\alpha} \log \frac{1}{\rho}\right\} ,
	\]
	for all $\alpha \in (1,2]$. $\alpha/(1+\alpha) > 1/2$ and $\rho \in(0,1)$ yield Lemma \ref{lem:mixing VARp} immediately.		
	
	\section{Proofs of Theorem \ref{thm:VAR} }
	\label{sec: proof}
	
	\begin{proof}
		[\textbf{Proof of Theorem \ref{thm:VAR}}]
		This is a direct application of Theorem \ref{cor:M_n = m_n = log n}. We need to estimate the corresponding covering number.
		Recall the definition $\bm{\Theta}$ defined in \eqref{e:Xi}, $i=1, \dotsc, p$, we then know that
		\begin{gather*}
			\rank([\bmPhi_1, \dotsc, \bmPhi_p] )  \le  d,  \\
			\big\|{ [\bmPhi_1, \dotsc, \bmPhi_p] }\big\|_{\rm op}  \le  \opnorm{ \bmPsi}  \le  C_{\rm op} \rho , 
		\end{gather*}
		which leads to
		\[
		\sum_{i = 1}^p \norm{\bmPhi_i}_{1,1} = \norm{ [\bmPhi_1, \dotsc, \bmPhi_{p}] }_{1,1}  \le  C_{\rm op} \sqrt{pd(d+1)} \rho.
		\]
		Additionally, it follows from \cite[Lemma 7]{ZhangAnru8368145} that for all $\delta \in (0,1)$
		\[
		N\big( \bm{\Theta} , \delta \big)  \le  \left( \frac{ 6 C_{\rm op} \rho }{\delta} \right)^{p(p+1)d^2} .
		\]
		Then,applying Theorem \ref{cor:M_n = m_n = log n} implies that the following inequality
		\[
		\begin{aligned}
			\popR(\widehat{\bm{\theta}}) - \popR(\bm{\theta}^*) 
			& \le  C \left( \frac{\log n}{ \abs{\log \rho} n} \big( \abs{\log \varepsilon} + p^2 d^2  \log n \big) \right)^{(\alpha-1)/\alpha} ,
		\end{aligned}
		\]
		holds with probability at least $1-2\varepsilon$ for some constant $C>0$ independent of $\varepsilon, n, p, \rho$ and $\alpha$. 
		We complete the proof.
	\end{proof}

	\section{Proofs of Auxiliary Lemmas}
	\label{appendix}
	
	To make notations simple, we define functions $\ell_1$ and $\ell_{\alpha}$ as
	\begin{equation*}
		\label{def:l_1,l_alpha}
		\ell_1(\bfy,\bfx,\bmtheta) = \abs{\bfy - \bmtheta \cdot \bfx }, 
		\quad \text{and } \quad 
		\ell_\alpha(\bfy,\bfx,\bmtheta) = \abs{ \bfy - \bmtheta \cdot \bfx }^\alpha .
	\end{equation*} 
	Besides, recalling that the $\ell_1$-risk and $\ell_\alpha$-risk are given by
	\begin{equation*}
		\label{def:risk}
		R_{\ell_1}(\bmtheta) = \bbE_{(\bfX, \bfY) \sim \bm{\Pi}}\big[ \ell_1( \bfY,\bfX,\bmtheta) \big]
		\quad \text{and} \quad
		R_{\ell_\alpha}(\bmtheta) = \bbE_{(\bfX, \bfY) \sim \bm{\Pi}}\big[ \ell_\alpha( \bfY,\bfX,\bmtheta) \big].
	\end{equation*} 
	And the truncation function $\psi_{\alpha}(r): \bbR \to \bbR$ is non-decreasing and satisfies 
	\[
	-\log\left( 1 - r + \frac{\abs{r}^{\alpha}}{\alpha} \right)  \le  \psi_\alpha(r)  \le  \log\left( 1 + r + \frac{\abs{r}^{\alpha}}{\alpha} \right), \quad \alpha \in (1,2]. 
	\]
	
	\begin{proof}
		[\textbf{Proof of Lemma \ref{lem:C_Ine}}]
		Let $ \mathcal{I} \subseteq \{1,\dotsc, n\}$ be any subset. For \eqref{e:upper}, $\psi_\alpha(r)  \le  \log(1+r+\abs{r}^\alpha / \alpha) $ leads to
		\begin{align*}
			&\pheq
			\bbE\left[ \exp\bigg\{ \frac{1}{\abs{\mathcal{I}}} \sum_{i \in \mathcal{I}}  \psi_{\alpha}  \big( \lambda \abs{ \bfY_i - \bmtheta \cdot \bfX_i  } \big) \bigg\} \right] \\
			& \le  \bbE\left[ \prod_{i \in \mathcal{I}} \big( 1 + \lambda \ell_1(\bfY_i, \bfX_i, \bmtheta) + \alpha^{-1} \lambda^{\alpha} \ell_{\alpha}(\bfY_i, \bfX_i, \bmtheta) \big)^{1/\abs{\mathcal{I}}} \right] \\
			& \le  \big( 1 + \lambda R_{\ell_1}(\bmtheta) + \alpha \lambda^{\alpha} R_{\ell_{\alpha}}(\bmtheta) \big) 
			\le  \exp\big\{ \lambda R_{\ell_1}(\bmtheta) + \alpha \lambda^{\alpha} R_{\ell_{\alpha}}(\bmtheta) \big\},
		\end{align*}
		where the second inequality is by the general H\"older's inequality. 
		Analogously, by $\psi_{\alpha}(r)  \ge  - \log ( 1 - r + \abs{r}^{\alpha} / \alpha)$, and $(a+b)^{\alpha}  \le  2^{\alpha -1}(a^\alpha + b^{\alpha})$ for any $a,b  \ge  0$ with $\alpha\in(1,2]$, we have
		\begin{align*}
			&\pheq
			\bbE\left[ \exp\left\{ -\frac{1}{\abs{\mathcal{I}}} \sum_{i \in \mathcal{I}}  \psi_{\alpha} \left( \lambda \abs{\bfY_i - \bmtheta \cdot \bfX_i }  - \lambda \delta \abs{\bfX_i} \right) \right\} \right] \\
			& \le  \bbE \left[ \prod_{i \in \mathcal{I}} \Big( 1 - \lambda \ell_1(\bfY_i,\bfX_i,\bmtheta) + \lambda \delta \abs{\bfX_i} +  \frac{\lambda^{\alpha}}{\alpha} \big|{ \ell_1(\bfY_i,\bfX_i,\bmtheta) - \delta \abs{\bfX_i} }\big|^{\alpha} \Big)^{ 1/{\abs{\mathcal{I}}} } \right] \\
			& \le  1 + \lambda\left[ - \popR(\bmtheta) + \delta \bbE\abs{\bfX_1} + \frac{(2\lambda)^{\alpha-1}}{\alpha} \Big( \sup_{\bmtheta\in\bm{\Theta}} R_{\ell_{\alpha}}(\bmtheta) + \delta^{\alpha} \bbE\abs{\bfX_1}^{\alpha} \Big) \right],
		\end{align*}
		which yields \eqref{e:lower} immediately.
	\end{proof}
	
	\begin{proof}[\textbf{Proof of Lemma \ref{lemma:error bound w.r.t. theta star}}]		
		To make notations simple, we write function $g$ as
		\begin{equation}
			\label{def:g}
			g(n, \alpha,\lambda,\bmtheta^*,\varepsilon) = \popR(\bmtheta^*) + \frac{\lambda^{\alpha-1}}{\alpha} R_{\ell_{\alpha}}(\bmtheta^*) + \frac{2 M_n }{\lambda K(n)m_n} \log\frac{4}{\varepsilon}
		\end{equation}
		and let
		\[
		\catoniR(\bmtheta^*) 
		=
		\frac{1}{\lambda n} \sum_{i=1}^n  \psi_{\alpha} \left( \lambda \abs{\bfY_i -  \bmtheta^* \cdot \bfX_i } \right) =: \frac1{\lambda n} S(\bmtheta^*).
		\]
		$S(\bmtheta^*)$ can be divided into three parts, the big blocks sum $S_b(\bmtheta^*)$, the small blocks sum $S_s(\bmtheta^*)$ and the remainder sum $S_r(\bmtheta^*)$, that is
		\[
		S(\bmtheta^*) =  S_b(\bmtheta^*) + S_s(\bmtheta^*) + S_r(\bmtheta^*) = \sum_{j=1}^{K(n)} S_{j,b}(\bmtheta^*) + \sum_{j=1}^{K(n)} S_{j,s}(\bmtheta^*) + S_r(\bmtheta^*) ,
		\]
		where $S_{j,b}(\bmtheta^*)$, $S_{j,s}(\bmtheta^*)$ and $ S_r(\bmtheta^*)$ are given by
		\begin{align*}
			S_{j,b}(\bmtheta^*) &=  \sum_{i \in \mathcal{J}_{j,M_n}}  \psi_{\alpha} \big( \lambda \abs{\bfY_i -  \bmtheta^* \cdot \bfX_i  } \big) , \\
			S_{j,s}(\bmtheta^*) &=  \sum_{i \in \mathcal{I}_{j,m_n}}  \psi_{\alpha} \big( \lambda \abs{\bfY_i -  \bmtheta^* \cdot \bfX_i } \big) , \\
			S_{r}(\bmtheta^*) &= \sum_{i \in \mathcal{R}_n}  \psi_{\alpha} \big( \lambda \abs{\bfY_i -  \bmtheta^* \cdot \bfX_i  } \big) . 
		\end{align*}
		Here, $\mathcal{J}_{j,M_n}$ and $\mathcal{I}_{j,m_n}$, $j = 1, \dotsc, K(n)$, are interlacing blocks given in \eqref{def:blocks}, and $\mathcal{R}_n$ is given in \eqref{def:reserved block}.
		On the other hand, we have the following inequality 
		\begin{align*}
			&\pheq
			\bbP\left( \catoniR(\bmtheta^*)  \ge  g(n, \alpha,\lambda,\bmtheta^*,\varepsilon) \right) \\
			&= \bbP\left( \sum_{i=1}^n  \psi_{\alpha} \big( \lambda \abs{ \bfY_i- \bmtheta^* \cdot \bfX_i } \big)  \ge  \lambda \big( K(n)M_n + K(n)m_n + R_n \big) g(n, \alpha,\lambda,\bmtheta^*,\varepsilon) \right) \\
			& \le  \bbP\big( S_b(\bmtheta^*) + S_r(\bmtheta^*)  \ge  \lambda ( K(n)M_n + R_n) \cdot g(n, \alpha,\lambda,\bmtheta^*,\varepsilon) \big) \\
			&\pheq + \bbP\big( S_s(\bmtheta^*)  \ge  \lambda K(n)m_n \cdot g(n, \alpha,\lambda,\bmtheta^*,\varepsilon) \big)  =: {\rm I}_1 + {\rm I}_2.
		\end{align*}
		We claim that for any $\varepsilon\in(0,1)$, the followings hold
		\begin{align}
			{\rm I}_1 &:= \bbP\big( S_b(\bmtheta^*) + S_r(\bmtheta^*)  \ge  \lambda ( K(n)M_n + R_n) \cdot g(n, \alpha,\lambda,\bmtheta^*,\varepsilon) \big)  \le  \frac{\varepsilon}{2} \label{ineq i } \\
			{\rm I}_2 &:= \bbP\big( S_s(\bmtheta^*)  \ge  \lambda K(n)m_n \cdot g(n, \alpha,\lambda,\bmtheta^*,\varepsilon) \big)  \le  \frac{\varepsilon}{2} . \label{ineq:ii}
		\end{align}
		Combining these with definition of function $g$ in \eqref{def:g} implies the desired result. 
		
		It remains to prove \eqref{ineq i } and \eqref{ineq:ii}. We only verify \eqref{ineq i } in detail, since \eqref{ineq:ii} can be done in the same way.
		Recall the sequence of random vectors $\{ \bm{\mathcal{H}}_j \}_{1\leq j \leq K(n)}$ and $\bm{\mathcal{R}}$ as defined in \eqref{def:HGR}, and the corresponding independent sequence $\{ \widetilde{\bm{\mathcal{H}}}_j \}_{1\leq j \leq K(n)}$ and $\widetilde{\bm{\mathcal{R}}}$ as defined in \eqref{def:ind HGR}. Additionally, recall the event $\mathcal{A}$ as defined in \eqref{def: events A & B},
		\[
		\mathcal{A} = \{ \widetilde{\bm{\mathcal{H}}}_{j} \neq \bm{\mathcal{H}}_{j} \text{ for some } 1 \le  j  \le  K_n , \ \text{or} \ \widetilde{\bm{\mathcal{R}}} \neq \bm{ \mathcal{R}} \} .    
		\]
		Under the conditions in the lemma, it satisfies that
		\[
		\bbP(\mathcal{A} )  \le  BK(n)\rme^{-\beta m_n}  \le  \frac{\varepsilon}{4}.
		\]
		We then define $\widetilde{S}_{j,b}(\bmtheta^*)$, $j = 1, \dotsc, K(n) $, and $\widetilde{S}_r(\bmtheta^*)$ as following:
		\[
		\widetilde{S}_{j,b}(\bmtheta^*) = \sum_{i \in \mathcal{J}_{j,M_n}}  \psi_{\alpha} \big( \lambda |\widetilde{\bfY}_i -  \bmtheta^* \cdot \widetilde{\bfX}_i   | \big), 
		\quad
		\widetilde{S}_r(\bmtheta^*) = \sum_{i \in \mathcal{R}_{R_n}}  \psi_{\alpha} \big( \lambda |\widetilde{\bfY}_i -  \bmtheta^* \cdot \widetilde{\bfX}_i   | \big) ,
		\]
		which are independent random variables have the same distributions as $S_{j,b}(\bmtheta^*)$, $j = 1, \dotsc, K(n) $, and $S_r(\bmtheta^*)$ respectively. Additionally, It holds that  
		\[
		\bbP\big( \widetilde{S}_{j,b} \neq S_{j,b} \ \text{ for some}  \ 1 \le  j  \le  K(n) , \ \text{or}  \ \widetilde{S}_r \neq S_r \big)  \le  \bbP(\mathcal{A})  \le  \frac{\varepsilon}{4} .
		\]
		Besides, we also have 
		\begin{align}\label{ineq:lemma 1.2}
			&\pheq
			\bbE\left[ \exp\Bigg\{ \frac{ 1 }{2 M_n }  \bigg( \sum_{j=1}^{K(n)} \widetilde{S}_{j,b}(\bmtheta^*) +  \widetilde{S}_{r}(\bmtheta^*) \bigg)\Bigg\} \right] \nonumber \\
			&= 
			\prod_{j=1}^{K(n)} \bbE\left[ \exp\left\{ \frac{1}{2 M_n } \widetilde{S}_{j,b}(\bmtheta^*) \right\} \right] 
			\cdot \bbE\left[ \exp\left\{ \frac{1}{2 M_n } \widetilde{S}_{r}(\bmtheta^*) \right\} \right] \\
			& \le  \prod_{j=1}^{K(n)}  \left\{ \bbE\left[ \exp\left\{ \frac{1}{M_n} S_{j,b}(\bmtheta^*) \right\} \right] \right\}^{1/2} 
			\left\{ \bbE\left[ \exp\left\{ \frac{1}{R_n} S_{r}(\bmtheta^*) \right\} \right] \right\}^{{R_n}/{ (2 M_n) }} \nonumber \\ 
			& \le  \exp\left\{ \frac{K(n)M_n + R_n }{2 M_n } \big( \lambda R_{\ell_1}(\bmtheta^*) + \alpha \lambda^{\alpha} R_{\ell_{\alpha}}(\bmtheta^*) \big) \right\}. \nonumber 
		\end{align}
		where the first inequality is by Jensen's inequality and the last inequality comes from \eqref{e:upper}. We define events $\mathcal{C}$ and $\mathcal{D}$ as the following:
		\begin{equation*}
			\begin{aligned}
				\mathcal{C} &= \big\{   S_b(\bmtheta^*) + S_r(\bmtheta^*)  \ge  \lambda ( K(n)M_n + R_n) \cdot g(n, \alpha,\lambda,\bmtheta^*,\varepsilon)  \big\} , \\
				\mathcal{D} &=  \big\{  \widetilde{S}_{j,b}(\bmtheta^*) \neq S_{j,b}(\bmtheta^*) \ \text{ for some}  \ 1 \le  j  \le  K(n) , \ \text{or}  \ \widetilde{S}_r(\bmtheta^*) \neq S_r(\bmtheta^*)  \big\} .
			\end{aligned}            
		\end{equation*}
		Then, we have
		\begin{equation}
			\label{e:I1}
			{\rm I}_1 = \bbP( \mathcal{C} \cap \mathcal{D} ) +  \bbP( \mathcal{C} \cap \mathcal{D}^c )  \le  \frac{\varepsilon}{4} + \bbP( \mathcal{C} \cap \mathcal{D}^c ).
		\end{equation}
		Thus, to prove \eqref{ineq i }, it suffices to verify $\bbP( \mathcal{C} \cap \mathcal{D}^c ) \le  \varepsilon/4$. By Markov's inequality, \eqref{def:g} and \eqref{ineq:lemma 1.2},
		\begin{align*}
			\bbP( \mathcal{C} \cap \mathcal{D}^c )		
			&=  \bbP\left( \sum_{j=1}^{K(n)} \widetilde{S}_{j,b}(\bmtheta^*) + \widetilde{S}_r(\bmtheta^*)  \ge  \lambda  K(n)M_n \cdot g(n, \alpha,\lambda,\bmtheta^*,\varepsilon) \right)  \\
			& \le  \frac{ \bbE\left[ \exp\left\{ \Big( \sum_{j=1}^{K(n)} \widetilde{S}_{j,b}(\bmtheta^*) + \widetilde{S}_r(\bmtheta^*) \Big)/{2 M_n } \right\} \right] }{\exp\left\{ \frac{\lambda ( K(n)M_n + R_n )}{2 M_n } g(n,\alpha,\lambda,\bmtheta^*,\varepsilon) \right\}} \\
			& \le  \exp\left\{ -\frac{K(n)M_n + R_n}{K(n)m_n} \log\frac{4}{\varepsilon} \right\}   \le  \frac{\varepsilon}{4}, 
		\end{align*}
		where the last inequality is due to the choice $m_n \le M_n$. The verification is \eqref{ineq i } completed. 
	\end{proof}
	
	Before proving Lemma \ref{lemma: error bound of theta hat}, we give the following lemma.
	
	\begin{lemma}
		\label{lemma: bound wrt delta net}
		Keep the same conditions in Lemma \ref{lemma: error bound of theta hat}. Then, the following inequality holds for any $\tilde{\bmtheta} \in \bm{\mathcal{N}}(\bm{\Theta},\delta)$
		\[
		-\frac1{n\lambda} \sum_{i=1}^n \psi_\alpha \left( \lambda \big|{\bfY_i - \tilde{\bmtheta} \cdot  \bfX_i }\big| - \lambda \delta \abs{\bfX_i} \right) 
		\le  -\popR(\tilde{\bmtheta}) + h(n,\alpha,\lambda,\delta,\varepsilon )
		\]
		with probability at least $1-\varepsilon$, where the function $h$ is given in \eqref{eq:h}, that is,
		\begin{equation*}
			\begin{aligned}
				h(n,\alpha,\lambda,\delta,\varepsilon)
				&= \delta \bbE\abs{\bfX_1} + \frac{(2\lambda)^{\alpha-1}}{\alpha}\left( \sup_{\bmtheta\in\bm{\Theta}} R_{\ell_{\alpha}}(\bmtheta) + \delta^{\alpha} \bbE\abs{\bfX_1}^{\alpha} \right) \\
				&\pheq +  \frac{2 M_n}{\lambda K(n)m_n} \log\frac{4 N(\bm{\Theta},\delta)}{\varepsilon} .
			\end{aligned}
		\end{equation*}
	\end{lemma}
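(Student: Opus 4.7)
The plan is to mimic the proof strategy of Lemma \ref{lemma:error bound w.r.t. theta star}, but apply the lower-tail inequality \eqref{e:lower} of Lemma \ref{lem:C_Ine} in place of the upper-tail one, and then close with a union bound over the $\delta$-net. Fix an arbitrary $\tilde{\bmtheta}\in \bm{\mathcal{N}}(\bm{\Theta},\delta)$, and set
\[
T(\tilde{\bmtheta}) = \sum_{i=1}^n \psi_\alpha\!\left(\lambda\abs{\bfY_i - \tilde{\bmtheta}\cdot\bfX_i} - \lambda\delta\abs{\bfX_i}\right).
\]
The inequality we wish to establish is equivalent to $-T(\tilde{\bmtheta}) \le n\lambda\bigl(-R_{\ell_1}(\tilde{\bmtheta}) + h(n,\alpha,\lambda,\delta,\varepsilon)\bigr)$, so the task is to control a lower deviation of $T(\tilde{\bmtheta})$. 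Following the block technique of \eqref{def:blocks}--\eqref{def:reserved block}, decompose $T(\tilde{\bmtheta}) = T_b(\tilde{\bmtheta}) + T_s(\tilde{\bmtheta}) + T_r(\tilde{\bmtheta})$ into the sums over the big blocks, small blocks and remainder, with $T_b(\tilde{\bmtheta}) = \sum_{j=1}^{K(n)} T_{j,b}(\tilde{\bmtheta})$ and similarly for the small blocks.

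Next, the failure event is split in two, as in the proof of Lemma \ref{lemma:error bound w.r.t. theta star}: the event that $-T_b(\tilde{\bmtheta}) - T_r(\tilde{\bmtheta})$ is too large, and the event that $-T_s(\tilde{\bmtheta})$ is too large. For each half, I replace the dependent blocks by the independent surrogates $\widetilde{T}_{j,b}, \widetilde{T}_{j,s}, \widetilde{T}_r$ obtained via \cite[Lemma 2.1]{MR0871254}; the coupling error is bounded by $BK(n)e^{-\beta m_n}$, which under \eqref{e:cond.2} is at most $\varepsilon/(4N(\bm{\Theta},\delta))$. On the surrogates I apply Markov's inequality to $\exp\{-\widetilde{T}_b/(2M_n)\}\exp\{-\widetilde{T}_r/(2M_n)\}$ (and analogously for the small blocks), factor the expectation by independence, use Jensen to reduce the remainder and small-block factors to single-block ones, and invoke \eqref{e:lower} of Lemma \ref{lem:C_Ine} on each block. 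The resulting moment bound contributes exactly the first three terms of $h$ (the $\delta\bbE|\bfX_1|$, the $(2\lambda)^{\alpha-1}$ moment term, and the sub-Gaussian-style log term), and choosing the Markov threshold so that the exponent matches $\log(4N(\bm{\Theta},\delta)/\varepsilon)/(K(n)m_n)$ per block (as in the earlier lemma) produces the $\log(4N(\bm{\Theta},\delta)/\varepsilon)$ factor in $h$ rather than the $\log(4/\varepsilon)$ of Lemma \ref{lemma:error bound w.r.t. theta star}.

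Putting the two halves together yields, for the fixed $\tilde{\bmtheta}$,
\[
\bbP\!\left( -\frac{1}{n\lambda}T(\tilde{\bmtheta}) > -R_{\ell_1}(\tilde{\bmtheta}) + h(n,\alpha,\lambda,\delta,\varepsilon) \right) \le \frac{\varepsilon}{N(\bm{\Theta},\delta)}.
\]
A union bound over the at most $N(\bm{\Theta},\delta)$ elements of $\bm{\mathcal{N}}(\bm{\Theta},\delta)$ then gives the claim with the prescribed probability $1-\varepsilon$.

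The main obstacle I anticipate is the bookkeeping around the shifted argument $\lambda|\bfY_i-\tilde{\bmtheta}\cdot\bfX_i|-\lambda\delta|\bfX_i|$ in \eqref{e:lower}: one must verify that the exponential-moment bound in \eqref{e:lower}, proved in Lemma \ref{lem:C_Ine} with $\bmtheta\in\bm{\Theta}$, indeed applies to every $\tilde{\bmtheta}$ in the $\delta$-net (which is a subset of $\bm{\Theta}$) with the \emph{same} constants $\sup_{\bmtheta\in\bm{\Theta}}R_{\ell_\alpha}(\bmtheta)$ and $\bbE|\bfX_1|^\alpha$, so that the union bound does not inflate those terms. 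A secondary, purely arithmetic point is to confirm that the choice $m_n\le M_n$ makes the factor $(K(n)M_n+R_n)/(K(n)m_n)$ absorbable into a constant; this is identical to the corresponding step in Lemma \ref{lemma:error bound w.r.t. theta star} and causes no new difficulty.
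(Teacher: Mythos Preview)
Your proposal is correct and follows essentially the same approach as the paper: block decomposition, coupling to independent blocks via \cite[Lemma~2.1]{MR0871254} with coupling error $\le \varepsilon/(4N(\bm{\Theta},\delta))$ under \eqref{e:cond.2}, Markov plus Jensen plus the lower bound \eqref{e:lower} on each block, and then a union bound over the net. The only cosmetic difference is the sign convention in defining $T(\tilde{\bmtheta})$, and your anticipated ``obstacle'' is a non-issue since $\bm{\mathcal{N}}(\bm{\Theta},\delta)\subseteq\bm{\Theta}$, so \eqref{e:lower} applies verbatim with the uniform constants.
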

	
	\begin{proof}[\textbf{Proof of Lemma \ref{lemma: bound wrt delta net}}] 
		The proof is very similar to the proof of Lemma \ref{lemma:error bound w.r.t. theta star}, but we sketch out necessary details here. To make notations simple, for any fixed $\tilde{\bmtheta} \in \bm{\mathcal{N}}(\bm{\Theta},\delta) = \{ \tilde{\bmtheta}_i, i = 1,\dotsc, N(\bm{\Theta}, \delta) \}$ being the $\delta$-net of $\bm{\Theta}$, denote 
		\[
		T(\tilde{\bmtheta}) = - \sum_{i=1}^n  \psi_{\alpha} \left( \lambda \big|{\bfY_i - \tilde{\bmtheta} \cdot  \bfX_i }\big| - \lambda \delta \abs{\bfX_i} \right) ,
		\]
		and denote the event $\mathcal{C}(\tilde{\bmtheta})$ as 
		\[
		\mathcal{C}(\tilde{\bmtheta}) = \left\{\frac1{n\lambda} T(\tilde{\bmtheta})  \ge  -\popR(\tilde{\bmtheta}) + h(n,\alpha,\lambda,\delta,\varepsilon) \right\}.
		\] 
		We claim that
		\begin{equation}
			\label{lem 3.3 claim}
			\bbP\big(\mathcal{C}(\tilde{\bmtheta}_i)\big)  \le  \frac{\varepsilon }{ N(\bm{\Theta},\delta) }, \quad \forall \ i = 1, \dotsc, N(\bm{\Theta},\delta),
		\end{equation} 
		which will implies the result immediately, that is
		\[
		\bbP\bigg( \bigcap_{i=1}^{N(\bm{\Theta},\delta)} \mathcal{C}(\tilde{\bmtheta}_i)^c \bigg)  \ge  1 - \sum_{i = 1}^{N(\bm{\Theta},\delta)} \bbP\big(\mathcal{C}(\tilde{\bmtheta}_i)\big)  \ge  1 - \varepsilon.
		\]
		
		Next, we prove \eqref{lem 3.3 claim} for any fixed $\tilde{\bmtheta} \in \bm{\mathcal{N}}(\bm{\Theta},\delta)$. Analogous to the proof of Lemma \ref{lemma:error bound w.r.t. theta star},  we divide $T(\tilde{\bmtheta})$ into three parts, that is
		\[
		T(\tilde{\bmtheta}) 
		= T_b(\tilde{\bmtheta}) + T_s(\tilde{\bmtheta}) + T_r(\tilde{\bmtheta}) 
		:= \sum_{j=1}^{K(n)}  T_{j,b}(\tilde{\bmtheta}) + \sum_{j=1}^{K(n)} T_{j,s}(\tilde{\bmtheta})  + T_r(\tilde{\bmtheta}),
		\]
		where 
		\begin{align*}
			T_{j,b}(\tilde{\bmtheta}) &= -\sum_{i \in \mathcal{J}_{j,M_n}}  \psi_{\alpha} \left( \lambda \big|{\bfY_i - \tilde{\bmtheta} \cdot  \bfX_i } \big| - \lambda \delta \abs{\bfX_i} \right) , \\
			T_{j,s}(\tilde{\bmtheta}) &= -\sum_{i \in \mathcal{I}_{j,m_n}}  \psi_{\alpha} \left( \lambda \big|{\bfY_i - \tilde{\bmtheta} \cdot  \bfX_i } \big| - \lambda \delta \abs{\bfX_i} \right) , \\
			T_{r}(\tilde{\bmtheta}) &= -\sum_{i \in \mathcal{R}_n}  \psi_{\alpha} \left( \lambda \big|{\bfY_i - \tilde{\bmtheta} \cdot  \bfX_i } \big| - \lambda \delta \abs{\bfX_i} \right) .
		\end{align*}
		Then, we have that 
		\[
		\begin{aligned}
			\bbP\big(\mathcal{C}(\tilde{\bmtheta})\big) &= \bbP \big( T(\tilde{\bmtheta})  \ge  \lambda(K(n)M_n+ K(n) m_n + R_n)[-\popR(\tilde{\bmtheta}) + h(n,\alpha,\lambda,\delta,\varepsilon)] \big) \\
			& \le  \bbP\big( T_b(\tilde{\bmtheta}) + T_r(\tilde{\bmtheta})  \ge  \lambda(K(n)M_n + R_n)[-\popR(\tilde{\bmtheta}) + h(n,\alpha,\lambda,\delta,\varepsilon)] \big) \\
			&\pheq \bbP\big( T_s(\tilde{\bmtheta})   \ge  \lambda K(n)m_n [-\popR(\tilde{\bmtheta}) + h(n,\alpha,\lambda,\delta,\varepsilon)] \big) =: {\rm J}_1 + {\rm J}_2.
		\end{aligned}
		\]
		We only prove that 
		\begin{equation*}
			{\rm J}_1 = \bbP\big( T_b(\tilde{\bmtheta}) + T_r(\tilde{\bmtheta})  \ge  \lambda(K(n)M_n + R_n)[-\popR(\tilde{\bmtheta}) + h(n,\alpha,\lambda,\delta,\varepsilon)] \big)  \le  \frac{\varepsilon }{ 2 N(\bm{\Theta},\delta) },
		\end{equation*}
		since one can prove ${\rm J}_2  \le  \varepsilon / (2  N(\bm{\Theta},\delta))$ in the same way. 		
		Let
		\[
		\begin{aligned}
			\widetilde{T}_{j,b}(\tilde{\bmtheta}) &= - \sum_{i \in \mathcal{J}_{j,M_n}}  \psi_{\alpha} \left( \lambda |\widetilde{\bfY}_i - \tilde{\bmtheta} \cdot  \widetilde{\bfX}_i  | - \lambda \delta |\widetilde{\bfX}_i|\right), \quad j=1,\dotsc,K(n),\\
			\widetilde{T}_r(\tilde{\bmtheta}) &= - \sum_{i \in \mathcal{R}_{R_n}}  \psi_{\alpha} \left( \lambda |\widetilde{\bfY}_i - \tilde{\bmtheta} \cdot  \widetilde{\bfX}_i  | - \lambda \delta |\widetilde{\bfX}_i|\right)
		\end{aligned}
		\]
		be the sum on $\widetilde{\bm{\mathcal{H}}}_j$ and $\bm{\widetilde{\mathcal{R}}}$ as defined in \eqref{def:ind HGR}, and they are independent random variables having same distributions as $T_{j,b}(\tilde{\bmtheta})$, $j = 1, \dotsc, K(n) $, and $T_r(\tilde{\bmtheta})$ respectively. Define events $\mathcal{D}(\tilde{\bmtheta})$ and $\mathcal{E}(\tilde{\bmtheta})$ as
		\[
		\begin{aligned}
			\mathcal{D}(\tilde{\bmtheta}) &= \{ \widetilde{T}_{j,b}(\tilde{\bmtheta}) \neq T_{j,b}(\tilde{\bmtheta}) \ \text{ for some}  \ 1 \le  j  \le  K(n) , \ \text{or}  \ \widetilde{T}_r(\tilde{\bmtheta}) \neq T_r(\tilde{\bmtheta}) \}  \\
			\mathcal{E}(\tilde{\bmtheta}) &= \{ T_b(\tilde{\bmtheta}) + T_r(\tilde{\bmtheta})  \ge  \lambda(K(n)M_n + R_n)[-\popR(\tilde{\bmtheta}) + h(n,\alpha,\lambda,\delta,\varepsilon)] \}
		\end{aligned}            
		\]
		Then, it holds that
		\[
		\bbP\big( \mathcal{D}(\tilde{\bmtheta}) \big)  \le  \bbP(\mathcal{A})  \le  BK(n)\rme^{-\beta m_n}  \le  \frac{\varepsilon}{4  N(\bm{\Theta},\delta)}.
		\]
		Additionally, by Markov's inequality, \eqref{e:lower} and the definition of function $h$, we have
		\begin{align*}%\label{ineq: lem 3.3_2} 
			{\rm J}_1 &= \bbP\big(  \mathcal{D}(\tilde{\bmtheta}) \cap \mathcal{E}(\tilde{\bmtheta}) \big) + \bbP\big(  \mathcal{D}(\tilde{\bmtheta})^c \cap \mathcal{E}(\tilde{\bmtheta}) \big) \\
			& \le  
			\frac{ \bbE\left[ \exp\left\{ \Big( \sum_{j=1}^{K(n)} {\widetilde{T}_{j,b}(\tilde{\bmtheta})} +  \widetilde{T}_r(\tilde{\bmtheta}) \Big) / {\big( 2 M_n \big) }  \right\} \right] }{  \exp\left\{ \frac{  \lambda ( K(n) M_n + R_n ) }{2 M_n } \big( -\popR(\tilde{\bmtheta}) + h(n,\alpha,\lambda,\delta,\varepsilon) \big) \right\} } + \frac{\varepsilon}{4  N(\bm{\Theta},\delta)} \\
			& \le  \exp\left\{ -\frac{K(n)M_n+R_n}{K(n)m_n} \log \frac{4N(\bm{\Theta},\delta)}{\varepsilon} \right\} + BK(n)\rme^{-\beta m_n} \nonumber \\
			& \le  \frac{\varepsilon}{4 N(\bm{\Theta},\delta) } + \frac{\varepsilon}{4  N(\bm{\Theta},\delta)}  \le  \frac{\varepsilon}{2 N(\bm{\Theta},\delta)}. \nonumber
		\end{align*}
		
		The proof is complete.
	\end{proof}
	
	Finally, we apply this lemma to prove Lemma \ref{lemma: error bound of theta hat}.
	
	\begin{proof}[\textbf{Proof of Lemma \ref{lemma: error bound of theta hat}}]
		Since $\hat{\bmtheta}\in\bm{\Theta}$, there exists  a $\tilde{\bmtheta} \in \bm{\mathcal{N}}(\bm{\Theta},\delta)$ such that
		\[
		\big\| {\hat{\bmtheta} - \tilde{\bmtheta}} \big\|_{\rm op}  \le  \delta.
		\] 
		Then, we have the following decomposition,
		\[
		\popR(\hat{\bmtheta}) - \catoniR(\hat{\bmtheta}) = \left[ \popR(\hat{\bmtheta}) - \popR(\tilde{\bmtheta}) \right] + \left[ \popR(\tilde{\bmtheta}) - \catoniR(\hat{\bmtheta}) \right].
		\]
		By the definition of $\popR(\bmtheta)$ and triangle inequality, we also have
		\begin{equation}\label{ineq: lem 3.4_1}
			\popR(\hat{\bmtheta}) - \popR(\tilde{\bmtheta}) 
			= \bbE_{(\bfX_1, \bfY_1)\sim \bm{\Pi}} \big[ |\bfY_1- \hat{\bmtheta} \cdot \bfX_1 | -  |\bfY_1- \tilde{\bmtheta} \cdot \bfX_1  | \big]  \le  \delta \bbE\abs{\bfX_1}.
		\end{equation}
		On the other hand,  for any $i=1,\dotsc,n$
		\[
		\big|{\bfY_i- \hat{\bmtheta} \cdot \bfX_i}\big|  \ge  \big|{\bfY_i- \tilde{\bmtheta} \cdot \bfX_i }\big| - \delta \abs{\bfX_i}.
		\]
		Hence, combining non-decreasing property of $ \psi_{\alpha} $, we have
		\begin{equation}\label{ineq: lem 3.4_2}
			\catoniR(\hat{\bmtheta}) 
			%= \frac1{n\lambda} \sum_{i=1}^n  \psi_{\alpha}  \left(\lambda \big|{Y_i-\big\langle{\bfX_i,\hat{\bmtheta}}\big\rangle}\big| \right) 
			\ge  \frac1{n\lambda} \sum_{i=1}^n  \psi_{\alpha}  \left(\lambda \big|  \bfY_i-\tilde{\bmtheta} \cdot \bfX_i \big| - \lambda \delta \abs{\bfX_i}  \right) .
		\end{equation}
		Then,  applying Lemma \ref{lemma: bound wrt delta net} yields that
		\begin{align*}
			&\pheq
			\bbP\left( \popR(\hat{\bmtheta}) - \catoniR(\hat{\bmtheta}) 
			\le 
			\delta \bbE\abs{\bfX_1} + h(n,\alpha,\lambda, \delta, \varepsilon, \tilde{\bmtheta}) \right) \nonumber \\
			& \ge  \bbP\left( \catoniR(\hat{\bmtheta} )  \ge  \popR(\tilde{\bmtheta}) - h(n,\alpha,\lambda,\delta,\varepsilon) \right) \\
			& \ge  
			\bbP\left( \frac1{n\lambda} \sum_{i=1}^n  \psi_{\alpha}  \left(\lambda \big|\bfY_i-\tilde{\bmtheta} \cdot \bfX_i \big| - \lambda \delta \abs{\bfX_i}  \right)  \ge  \popR(\tilde{\bmtheta}) - h(n,\alpha,\lambda,\delta,\varepsilon) \right)  \nonumber \\
			& \ge  1-\varepsilon. \nonumber
		\end{align*}			
		where the first inequality is by \eqref{ineq: lem 3.4_1}, the second inequality is by \eqref{ineq: lem 3.4_2}. 
		The proof is completed.
	\end{proof}
	
	%%%%%%%%%%%%%%%%%%%%%%%%%%%%%%%%%%%%%%%%%%%%%%%%%%%%%%%%%%%%%%%%%%%%%%%%%%%%%%%%
	
	\section*{Acknowledgements}
	G. Li is supported by Hong Kong Research Grant Council grant 17313722. 
	
	L. Xu is supported by the National Natural Science Foundation of 
	China No. 12071499, The Science and Technology Development Fund (FDCT) of Macau S.A.R. FDCT 0074/2023/RIA2, and the
	University of Macau grants MYRG2020-00039-FST, MYRG-GRG2023-00088-FST.
	
	W. Zhang is supported by NNSFC grant 11931014.

	%\section{Proofs of the Main Results}
	%\label{sec:sec4}

	%	\newpage
	
	\bibliographystyle{siam}
	\normalem
	\bibliography{bibliography_beta_mixing.bib}
	
	%\newpage
	
	%\section*{Appendix}
	
	%\appendix

\end{document}